\documentclass[11pt]{article}

\usepackage{amsmath, amsthm, amssymb, amscd,paralist}
\usepackage{enumitem} 

\usepackage{graphicx}
\usepackage{bm}
\usepackage{tikz}
\usepackage[title]{appendix}
\usepackage{comment}

\DeclareGraphicsExtensions{.pdf,.eps,.png}
\graphicspath{{./Git/Figures/}}

\sloppy

\usepackage[linkbordercolor={0 0 1}]{hyperref}



\newtheorem{thm}{Theorem}[section]
\newtheorem{lem}[thm]{Lemma}
\newtheorem{prop}[thm]{Proposition}

\newtheorem{assump}{Assumption}

\addtolength{\textwidth}{1in}
\addtolength{\oddsidemargin}{-0.5in}
\addtolength{\textheight}{1in}
\addtolength{\topmargin}{-0.55in}

\frenchspacing


\parindent 0cm
\addtolength{\parskip}{0.5\baselineskip}


\begin{document}

\title{On frequentist coverage of Bayesian credible sets for estimation of the mean under constraints}

\author{Kevin Duisters\; and Johannes Schmidt-Hieber\footnote{The research leading to these results has received funding from the Dutch Science Organization (NWO) via a TOP II grant and a Vidi grant. Source code for the plots is available from \url{https://github.com/KevinDuisters/BayesCoverage} \newline Email: \href{mailto:k.l.w.duisters@math.leidenuniv.nl}{k.l.w.duisters@math.leidenuniv.nl}, \href{mailto:a.j.schmidt-hieber@utwente.nl}{a.j.schmidt-hieber@utwente.nl}}
 \vspace{0.1cm} \\
 {\em Leiden University and University of Twente} }

\maketitle
\date{}

\begin{abstract}
Frequentist coverage of $(1-\alpha)$-highest posterior density (HPD) credible sets is studied in a signal plus noise model under a large class of noise distributions. We consider a specific class of spike-and-slab prior distributions. Different regimes are identified and we derive closed form expressions for the $(1-\alpha)$-HPD on each of these regimes. Similar to the earlier work by \cite{MS2008}, it is shown that under suitable conditions, the frequentist coverage can drop to $1-3\alpha/2.$
\end{abstract}

\paragraph{AMS 2010 Subject Classification:} 62C10, 62G15, 62F15
%
%
\paragraph{Keywords:} Bayes; credible sets; confidence sets; frequentist coverage; sparsity.

\section{Introduction}
\label{sec.intro}

Despite the popularity in applied sciences of using Bayes for uncertainty quantification, frequentist properties of (Bayesian) credible sets remain poorly understood. In a few special cases, e.g. under conjugacy, it might be possible to identify the posterior as a distribution of some known class and to derive closed form expressions for the frequentist coverage of a given $(1-\alpha)$-credible set. Except for location problems, there is typically a gap with the frequentist coverage not matching the credibility (see \cite{MR95550} and the survey article \cite{MR2918001} including discussion \cite{zhang2011},\cite{wasserman2011}). Moreover, for parametric models one can argue via the Bernstein-von Mises theorem to show that a given $(1-\alpha)$-credible set is also an asymptotic $(1-\alpha)$-confidence set. In cases where the limiting shape of the posterior is complicated - and this comprises most of the nonparametric and high-dimensional models - very little can be said about the frequentist coverage of a credible set, see \cite{MR2906881, Castillo2013, Castillo2014, MR3357861, RSH2020, 2016arXiv160905067R} for some results and further references.

While smooth and slowly varying priors lead to Bernstein-von Mises type theorems even if the number of parameters grows with $o(n^{1/3})$ and $n$ the sample size (\cite{Panov2015}), the spiky structure of model selection priors in high-dimensional statistical models induces large biases for smallish parameter values. In general, the posterior converges then to a difficult mixture distribution over different candidate models (\cite{castillo2015}). Until now, almost nothing can be said about the frequentist coverage in such cases.

A spike-and-slab prior puts a fraction of the mass to zero to enforce sparsity of the posterior. It is conceivable that because of the strong prior belief the posterior is overconfident, resulting in rather small credible sets with low frequentist coverage. To test such claims, we study the simplest imaginable model, where we observe $X$ with 
\begin{align}
	X = \theta + \varepsilon,
	\label{eq.mod}
\end{align}
and $\varepsilon$ is drawn from a known distribution with c.d.f. $G$ and symmetric density $g=G'$ unimodal at zero. This guarantees that $\theta$ is the mean and the median of $X.$ The spike-and-slab prior (\cite{mitchellbeauchamp,JohnSilv04,george}) is of the form
\[\pi(\theta) \propto (1-w) \delta_0(\theta) + w \gamma(\theta), \quad  \theta \in \Theta\]
with $\delta_0$ the Dirac measure at zero, $\gamma$ a density and $w>0$ the mixing proportion. The simplest choice would be to take $\gamma$ as the improper uniform distribution on $\mathbb{R}.$ Here we study the slightly more general prior with (improper) slab distribution
\begin{align*}
	\gamma(\theta)=\mathbf{1}(|\theta|>\lambda).
\end{align*}

Throughout the paper we call the spike-and-slab prior with this slab distribution the {\it $\theta$-min prior.} The $\theta$-min prior would be a natural choice if we would know beforehand that the true $\theta$ is either zero or large. In the high-dimensional statistics literature, this is known as $\beta$-min condition \cite{MR2807761}. Increasing $\lambda$ forces the posterior to put more mass to zero and enhances posterior sparsity. The rationale is that if we observe a small value of $X,$ the posterior has to decide whether this has been generated from $\theta=0$ or $|\theta|>\lambda.$ The likelihood for the latter decreases as $\lambda$ increases, resulting in a larger fraction of posterior mass being assigned to zero. We believe that this property is an attractive feature in applications. Under large sample asymptotics it is possible to achieve any level of sparsity by fixing the mixing weight and increasing $\lambda.$ In contrast, for the traditional spike-and-slab as well as the horseshoe prior and its variations, the prior mass gets more and more concentrated around zero as the sample size increases. 

\begin{figure}[htbp!]
	\centering
	\includegraphics[trim = 0  0 0 0,scale=0.12]{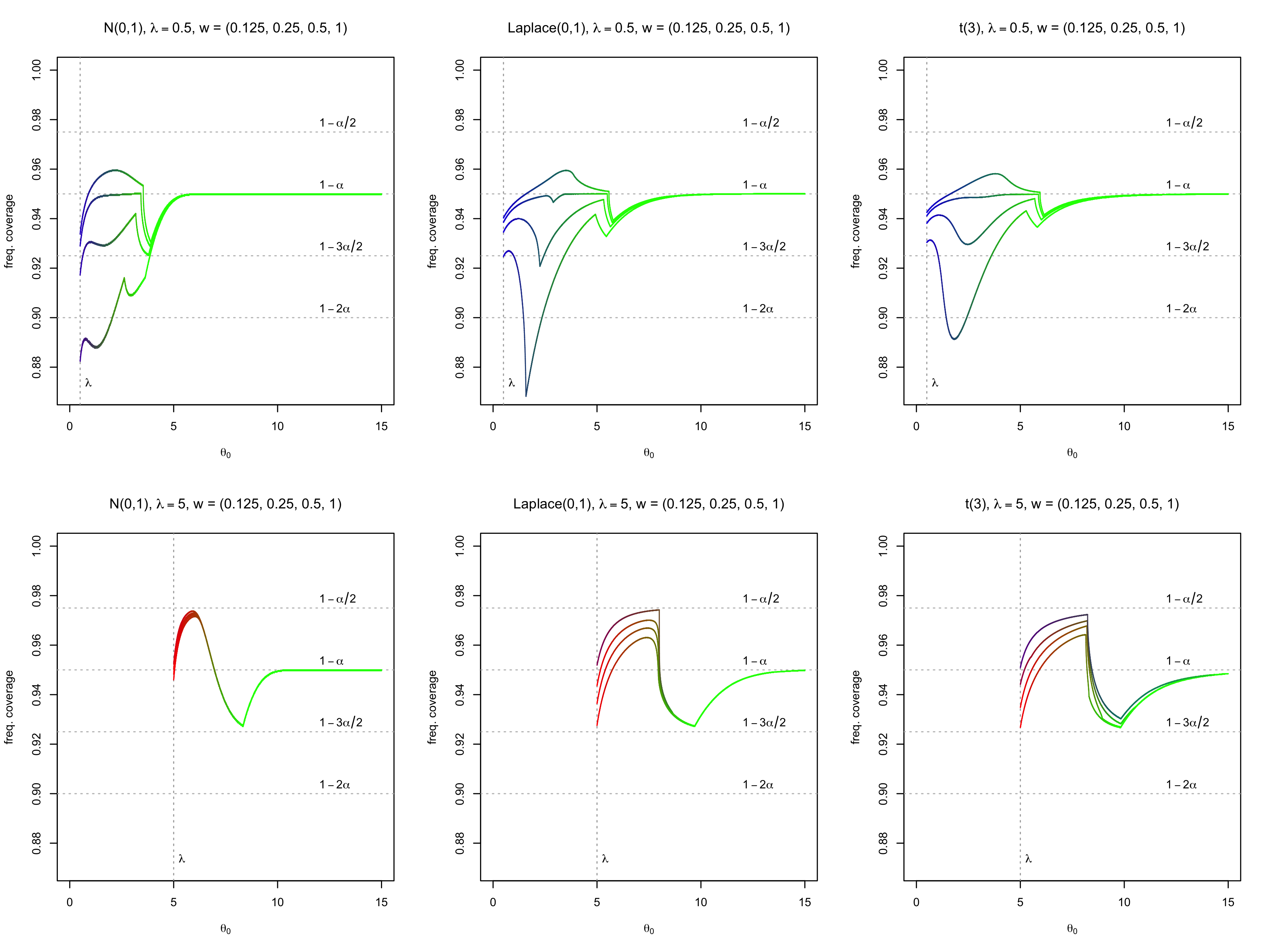}
	\caption{Frequentist coverage plots of $0.95$-highest posterior density credible set for $g$ standard normal (left column), Laplace (middle column), Student's $t(3)$ (right column) as well as $\lambda=0.5$ (top row) and $\lambda=5$ (bottom row). Each plot contains four lines generated by different $w$ values between $w=0.125$ (bottom line) to $w=1$ (top line). The coloring is explained in Section \ref{sec.coverage}.}
	\label{figcov}
\end{figure}

As common in Bayes, we denote by $\theta_0$ the value of $\theta$ that generated the data to distinguish it from the variable $\theta$ in the Bayes formula. We study the highest posterior density credible set (HPD). Figure \ref{figcov} displays the frequentist coverage of the HPD in dependence on $\theta_0$. Close to the threshold $\lambda,$ the frequentist coverage behaves quite erratic and rapid changes can occur. These sudden changes make the mathematical analysis highly non-trivial. In all cases there is a clearly visible local minimum with value between $1-3\alpha/2$ and $1-\alpha$ before the frequentist coverage increases to reach $1-\alpha$ for large values of $\theta_0.$ 

If $\lambda=0$ and $w=1,$ the prior is the uniform improper prior on the real line and one can directly verify that the frequentist coverage of the HPD for unimodal $g$ is exactly $1-\alpha.$ In this case, the coverage as a function of $\theta_0$ is flat. This shows that the properties of the frequentist coverage depend crucially on the choice of $\lambda$ and $w.$ To derive theoretical statements about the frequentist coverage, one of the challenges is to identify sufficient and necessary conditions on the parameters in the prior.

The first objective of this work is to derive closed-form expressions for the credible sets. It turns out that there are different regimes determining the behavior of the HPD credible set. In a first step, we identify these regimes and derive a closed-form expression for each of them. As a second step, it is shown how the expressions can be combined into a global formula for the credible sets that does not require knowledge of the regimes.

A starting point of our work is the beautiful theory developed in \cite{RoeWoodroofe2000, ZhangWoodroofe2003, MS2006, MS2008} studying variations of the model $X=\theta+\varepsilon$ with $\theta$ assumed to be non-negative. For this model, it is then natural to analyze the improper uniform prior on the positive half-axis, although other choices have been proposed as well \cite{DuanDunson2018}.  \cite{RoeWoodroofe2000} study the case $\epsilon \sim \mathcal{N}(0,\sigma^2)$ with variance $\sigma^2$ known.  \cite{ZhangWoodroofe2003} consider a variation with $\varepsilon \sim \mathcal{N}(0,\sigma^2)$ and unknown variance, assuming that we also observe $W\sim \sigma^2 \chi_r^2.$ For the improper prior $\pi(\theta, \sigma)=\sigma^{-1}\mathbf{1}(\sigma,\theta>0),$ it is shown that the posterior can be written as a truncated $t$-distribution. Based on this, an $(1-\alpha)$-credible set is derived for which it can be shown that the frequentist coverage is lower bounded by $(1-\alpha) / (1+\alpha)=1-2\alpha +O(\alpha^2).$ In \cite{MS2006} it is shown that this lower bound on the frequency coverage holds for a much larger class of problems. \cite{MS2008} studies the model $X=\theta+\varepsilon$ with error density $g$ assumed to be known and log-concave. Similar as in our analysis, regimes are identified on which the credible sets have different behavior. A complete characterization of the frequentist coverage is derived and in particular it is shown that the frequentist coverage is lower bounded by $1-3\alpha/2$ up to smaller order terms in $\alpha.$ This is sharper than the $1-2\alpha$ lower bound and it is shown that there is also one value of $\theta_0$ for which the lower bound is attained. In \cite{MR3066373} and \cite{MR3477727}, the analysis developed in \cite{MS2006} and \cite{MS2008} has been extended to a larger class of credible sets and allowing for some types of asymmetry in $g.$

The mathematical analysis of the two-sided setup considered here differs in some key aspects from the techniques developed in \cite{MS2008}. The HPD credible sets in \cite{MS2008} are intervals of the form $[L(X),U(X)]$ with $L$ and $U$ being non-decreasing functions in $X.$ For the analysis, this is a crucial property that does, however, not hold anymore in the two-sided setting considered here, see Lemma \ref{lem.Ua_decrease}. Frequentist coverage depends on the inverse of $L$ and $U.$ The fact that the inverse of $L$ and $U$ is set-valued and has a difficult structure is the major technical obstacle in our analysis and requires several new techniques to derive bounds for the frequentist coverage.

The paper is organized as follows. In Section \ref{sec.crediblesets}, we derive formulae for the HPD credible set. Results on the frequentist coverage of the credible sets are summarized and discussed in Section \ref{sec.crediblesets}. Post-selection is a frequentist method to determine confidence sets after a model selection procedure has been applied to the data. In Section \ref{sec.post_select} it is shown that in model \eqref{eq.mod}, posterior credible sets can be converted into post-selection sets. All proofs are deferred to Section \ref{sec.proofs}.

\section{Closed-form expressions for HPD credible set}
\label{sec.crediblesets}

We start by deriving an expression for the posterior. Throughout the article, $P_{\theta}$ denotes the distribution for $X=\theta+\varepsilon.$ It is convenient to define
\begin{align}
	\Delta_{\lambda}(x) := P_x\big(X \in [-\lambda, \lambda] \big) = G(\lambda - x) - G(-\lambda - x).
	\label{eq.delta_def}
\end{align}
Let $\delta_0$ be the point mass at zero. The posterior can be written as
\begin{align}
	\begin{split}
		\pi(\theta | X) &= \frac{g(\theta - X) \pi(\theta) }{\int_\Theta g(\theta - x) \pi(\theta) d\theta} \\
		&= \frac{(1-w)g(X)}{(1-w) g(X) + w (1-\Delta_{\lambda}(X))} \delta_0(\theta) + \frac{w g(X - \theta)\mathbf{1}(|\theta| > \lambda)}{(1-w) g(X) + w (1-\Delta_{\lambda}(X))}
	\end{split}
	\label{eq.posterior}
\end{align}
or in integrated form $\Pi(A|X) =\int_A \pi(\theta | X) \, d\theta.$ For $|\theta|>\lambda,$ we have $\pi(\theta|X) \propto g(X-\theta).$ If $g$ is strictly increasing on $(-\infty, 0)$ and strictly decreasing on $(0,\infty),$ $\pi$ is strictly increasing on $(-\infty, X)$ and strictly decreasing on $(X, \infty).$ 

The $(1-\alpha)$-HPD credible set given observation $X$ will be denoted by $\operatorname{HPD}_\alpha(X).$ If the point measure at zero does not cover yet $1-\alpha$ of the posterior mass, then $\operatorname{HPD}_\alpha(X)$ is the union of $\{0\}$ and the closed posterior level set $A_c \subseteq \mathbb{R}\setminus [-\lambda, \lambda]$ satisfying
\begin{align}
	\Pi( A_c \, | \, X ) = 1-\alpha - \Pi(0|X).
	\label{eq.HPD_as_level_set}
\end{align}

The $\theta$-min prior, the resulting posterior and the frequentist coverage are plotted in Figure \ref{figIntro}.

\begin{figure}[ht]
	\centering
	\includegraphics[trim = 0  0 0 0,scale=0.5]{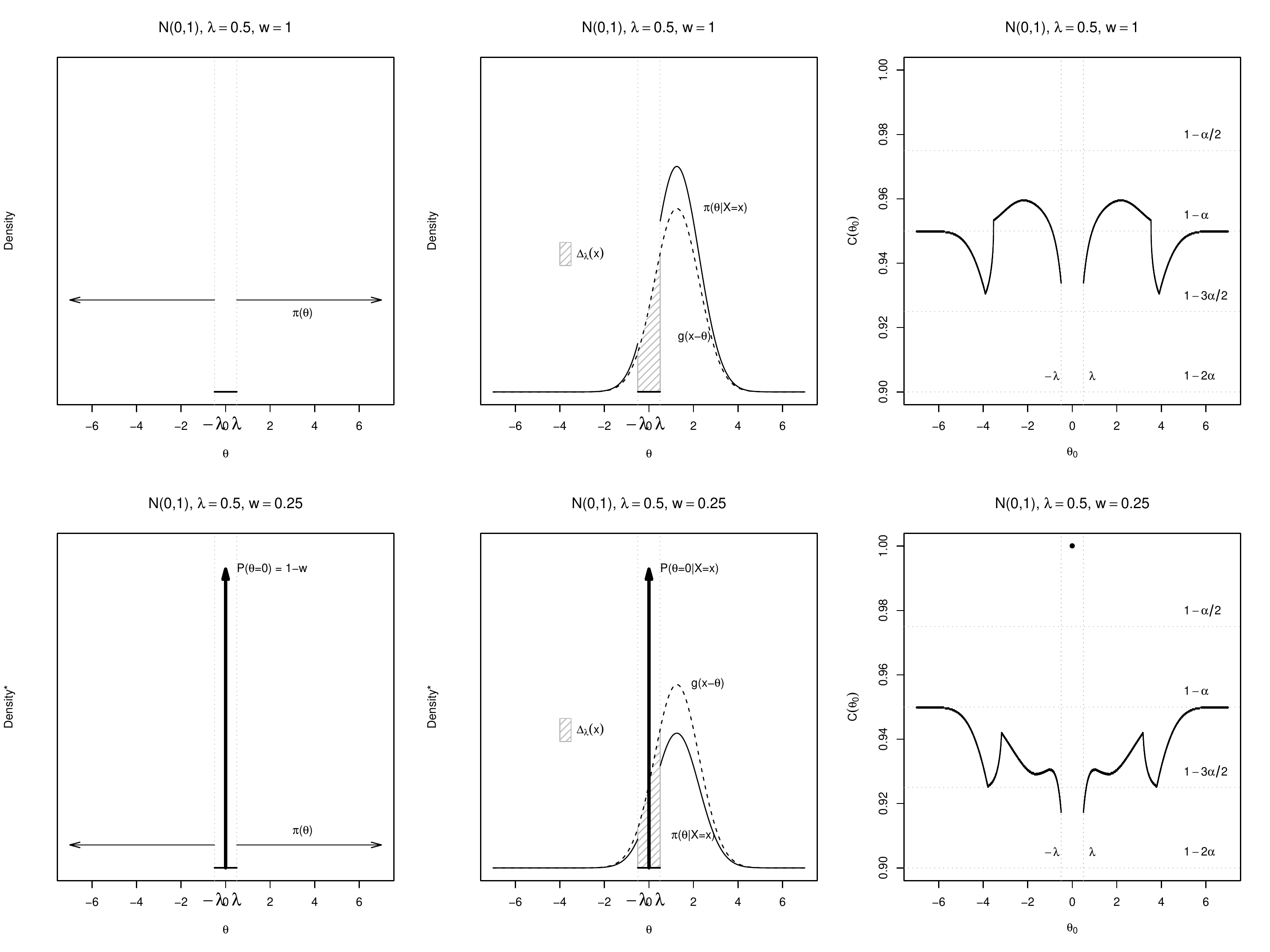}
	\caption{Illustration of $\theta$-min prior (left), posterior [solid] and likelihood [dashed] for $x=1.25$ (middle), and frequentist coverage of the 0.95-HPD credible set (right); using $g$ standard normal and $\lambda = 0.5$, with $w=1$ (top row) and $w=0.25$ (bottom row).} 
	\label{figIntro}
\end{figure}

For the further analysis, we always assume that the error distribution $g$ is in the class
\begin{align*}
	\mathcal{G}:=\big\{ g: &g \, \text{a positive and continuous density on} \, \mathbb{R}, \, \\
	&g \, \text{is symmetric around zero and strictly decreasing on} \, \mathbb{R}_+\big\}.
\end{align*}
This implies $g$ is unimodal with mode at zero. The c.d.f. of $g$ is denoted as $G : \mathbb{R} \mapsto (0,1)$ with corresponding inverse $G^{-1}(p) := \{ q \in \mathbb{R}: G(q) = p\}$ for any $p \in (0,1)$. We often use the symmetry induced properties $G(q)+G(-q)=1$ and $G^{-1}(p)+G^{-1}(1-p)=0.$

It might well happen that for some realizations of $X$ the $(1-\alpha)$-HPD credible set is the point measure at zero. The next result shows that if this is the case then there exists a unique solution $t_\alpha=t_\alpha(g, \lambda,w) \geq 0$ of 
\begin{align}
	\frac{w}{1-w} \frac{G(t_{\alpha}-\lambda)+G(-t_{\alpha}-\lambda)}{g(t_{\alpha})} = \frac{\alpha}{1-\alpha}.
	\label{eq.pt_mass_zero}
\end{align}

\begin{lem}[Posterior mass at 0]\label{lem.Ta}
	Let $g\in \mathcal{G}$. If there exists a non-negative solution $t_\alpha$ to \eqref{eq.pt_mass_zero} it is unique and $\Pi(0|X=x)\geq 1-\alpha$ if and only if $|x|\leq t_\alpha.$
\end{lem}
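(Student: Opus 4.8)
The plan is to reduce everything to a single monotonicity statement. Reading off the posterior mass at zero from \eqref{eq.posterior} gives
\[
	\Pi(0|X=x) = \frac{(1-w)g(x)}{(1-w)g(x)+w\big(1-\Delta_\lambda(x)\big)},
\]
and since this is increasing in the ratio $g(x)/(1-\Delta_\lambda(x))$, the condition $\Pi(0|X=x)\ge 1-\alpha$ is equivalent to $h(x)\le \alpha/(1-\alpha)$, where
\[
	h(x) := \frac{w}{1-w}\,\frac{1-\Delta_\lambda(x)}{g(x)}.
\]
Applying the symmetry relation $G(q)+G(-q)=1$ to \eqref{eq.delta_def} yields $1-\Delta_\lambda(x)=G(x-\lambda)+G(-x-\lambda)$, so that \eqref{eq.pt_mass_zero} is exactly the equation $h(t_\alpha)=\alpha/(1-\alpha)$. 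Both assertions of the lemma then follow once I show that $h$ is even and strictly increasing on $[0,\infty)$.

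Evenness is immediate, since $g$ is symmetric and $G(x-\lambda)+G(-x-\lambda)$ is invariant under $x\mapsto -x$. For monotonicity I would write $h=\tfrac{w}{1-w}\,N/D$ with $N(x)=G(x-\lambda)+G(-x-\lambda)$ and $D(x)=g(x)$, and treat numerator and denominator separately. The denominator $D$ is positive and strictly decreasing on $(0,\infty)$ because $g\in\mathcal{G}$. For the numerator, $N'(x)=g(x-\lambda)-g(x+\lambda)$ after using $g(-x-\lambda)=g(x+\lambda)$; the decisive inequality is that $|x-\lambda|<x+\lambda$ for every $x>0$, whence unimodality of $g$ gives $g(x-\lambda)=g(|x-\lambda|)>g(x+\lambda)$ and therefore $N'(x)>0$. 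Thus $N$ is positive and strictly increasing while $D$ is positive and strictly decreasing, and the elementary chain $N(x_1)/D(x_1)<N(x_2)/D(x_1)<N(x_2)/D(x_2)$ for $0\le x_1<x_2$ shows that $h$ is strictly increasing on $[0,\infty)$; this order-theoretic argument sidesteps any differentiability assumption on $g$ once $N$ is known to increase.

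Strict monotonicity on $[0,\infty)$ makes $h$ injective there, so there is at most one non-negative solution of $h(x)=\alpha/(1-\alpha)$, which together with the assumed existence yields uniqueness. For the characterization, for $x\ge 0$ we have $h(x)\le h(t_\alpha)$ iff $x\le t_\alpha$, and evenness upgrades this to $h(x)\le\alpha/(1-\alpha)$ iff $|x|\le t_\alpha$, i.e.\ $\Pi(0|X=x)\ge 1-\alpha$ iff $|x|\le t_\alpha$. I expect the only genuinely non-routine step to be the inequality $g(x-\lambda)>g(x+\lambda)$ for $x>0$ that underlies $N'(x)>0$; everything else is rearrangement of \eqref{eq.posterior} and the monotonicity of a ratio. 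The single point requiring mild care is the boundary $x=0$, where $N'(0)=0$, but since $N'>0$ throughout $(0,\infty)$ the functions $N$, and hence $h$, remain strictly increasing on the closed half-line $[0,\infty)$.
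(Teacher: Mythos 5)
Your proof is correct and follows essentially the same route as the paper's: both arguments reduce the claim to symmetry plus strict monotonicity on $[0,\infty)$ of the ratio $w(1-\Delta_\lambda(x))/((1-w)g(x))$, using that $1-\Delta_\lambda$ is strictly increasing and $g$ strictly decreasing there (the paper delegates the first fact to Lemma \ref{lem.tech.delta}(iii), whereas you re-derive it via $N'(x)=g(x-\lambda)-g(x+\lambda)>0$). The only cosmetic difference is that the paper works with $\Pi(0|X=x)$ itself as a decreasing function while you work with its reciprocal-type ratio as an increasing one.
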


If the posterior mass at zero is strictly smaller than $1-\alpha,$ no solution to \eqref{eq.pt_mass_zero} exists and we set $t_\alpha:=-\infty$ in this case. If $|X|\leq t_\alpha,$ the HPD credible set consists of $\{0\}$ only. To derive closed-form expressions for the HPD credible sets, it remains to study $\{X: |X| >  t_\alpha\}.$ Set
\begin{align}
	R_1(x) &:= G^{-1}\Big( 1- \frac{\alpha}2 - \frac{1-w}{2w} \alpha g(x) - \frac{1-\alpha}2 \Delta_\lambda(x) \Big) \notag \\
	R_2(x) &:= G^{-1}\Big( 1- \alpha - \frac{1-w}{w} \alpha g(x) +\alpha \Delta_\lambda(x) +G(-\lambda-x) \Big) \label{eq.Rsdef} \\
	R_3(x) &:= G^{-1}\Big( 1- \frac{\alpha}2 - \frac{1-w}{2w} \alpha g(x) + \frac{\alpha}2 \Delta_\lambda(x) \Big), \notag 
\end{align}
with $G^{-1}(p):=+\infty$ for $p\geq 1$ and $G^{-1}(p):=-\infty$ for $p\leq -1.$ As $w$ becomes small, the arguments can become negative and $G^{-1}$ is otherwise not necessarily well-defined anymore for all $x.$ The next lemma shows on which domains $R_1,R_3$ are finite.

\begin{lem}\label{lem.well_def}
	It holds that $0<R_1(x) <R_3(x)< \infty$ for all $\{x: |x| >  t_\alpha\}.$
\end{lem}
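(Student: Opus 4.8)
The plan is to reduce the three claimed inequalities to statements about the two arguments fed into $G^{-1}$. Writing
\[a_1(x) := 1 - \frac{\alpha}{2} - \frac{1-w}{2w}\alpha g(x) - \frac{1-\alpha}{2}\Delta_\lambda(x), \qquad a_3(x) := 1 - \frac{\alpha}{2} - \frac{1-w}{2w}\alpha g(x) + \frac{\alpha}{2}\Delta_\lambda(x),\]
so that $R_j(x) = G^{-1}(a_j(x))$, I first record the elementary identity
\[a_3(x) - a_1(x) = \frac{\alpha}{2}\Delta_\lambda(x) + \frac{1-\alpha}{2}\Delta_\lambda(x) = \frac12 \Delta_\lambda(x) > 0,\]
where positivity is immediate from $\Delta_\lambda(x) = G(\lambda - x) - G(-\lambda - x) > 0$ for $g \in \mathcal{G}$. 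Hence the arguments are strictly ordered, $a_1(x) < a_3(x)$, and once I show both lie in $(0,1)$ the monotonicity of $G^{-1}$ will give $R_1(x) < R_3(x)$ together with finiteness. Since $G$ is symmetric with $G(0) = 1/2$ and strictly increasing (because $g > 0$), it suffices to prove the sharper bracketing $\tfrac12 < a_1(x) < a_3(x) < 1$; then $R_1(x) = G^{-1}(a_1(x)) > G^{-1}(1/2) = 0$, and $R_3(x) = G^{-1}(a_3(x)) < \infty$ because the continuity and limits of $G$ make it map onto $(0,1)$.

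The upper bound $a_3(x) < 1$ will be the easy part. Subtracting $1$ and dividing by $\alpha/2 > 0$, the inequality is equivalent to $\Delta_\lambda(x) < 1 + \frac{1-w}{w}g(x)$, which holds for every $x$ because $\Delta_\lambda(x) < 1$ (as $G$ takes values strictly inside $(0,1)$) while the right-hand side is at least $1$. Notably this step uses neither $|x| > t_\alpha$ nor any structure beyond $g \in \mathcal{G}$.

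The lower bound $a_1(x) > \tfrac12$ is where the hypothesis $|x| > t_\alpha$ enters, and this is the step I expect to be the crux. Rearranging $a_1(x) > \tfrac12$ and clearing denominators, I would reduce it to the clean inequality
\[(1-w)\alpha\, g(x) < w(1-\alpha)\big(1 - \Delta_\lambda(x)\big).\]
The key observation is that this is exactly the condition $\Pi(0 \mid X = x) < 1-\alpha$: starting from the closed form $\Pi(0\mid X=x) = (1-w)g(x)\big/\big[(1-w)g(x) + w(1-\Delta_\lambda(x))\big]$ read off from \eqref{eq.posterior}, cross-multiplying the inequality $\Pi(0\mid X=x) < 1-\alpha$ produces precisely the displayed inequality. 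By Lemma \ref{lem.Ta}, $\Pi(0\mid X=x) \geq 1-\alpha$ holds if and only if $|x| \leq t_\alpha$, so its negation $\Pi(0\mid X=x) < 1-\alpha$ is equivalent to $|x| > t_\alpha$, which is exactly the domain on which the lemma is asserted. This closes the chain of equivalences and yields $a_1(x) > \tfrac12$ on $\{x : |x| > t_\alpha\}$.

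Combining the three parts gives $\tfrac12 < a_1(x) < a_3(x) < 1$ on $\{x : |x| > t_\alpha\}$, and applying the strictly increasing map $G^{-1}$ on $(0,1)$ delivers $0 < R_1(x) < R_3(x) < \infty$, as claimed. The only genuine subtlety is the algebraic identification in the lower-bound step: one must recognize that rearranging $a_1(x) > \tfrac12$ reproduces the posterior-mass inequality verbatim, so that Lemma \ref{lem.Ta} applies. The remaining manipulations are routine and rely only on the positivity and boundedness properties packaged in the definition of $\mathcal{G}$.
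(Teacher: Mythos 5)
Your proof is correct and follows essentially the same route as the paper's: bound the arguments of $G^{-1}$ strictly between $1/2$ and $1$, using $\Delta_\lambda(x)\in(0,1)$ for the ordering and the upper bound, and the equivalence of $a_1(x)>1/2$ with $\Pi(0\mid X=x)<1-\alpha$ (hence with $|x|>t_\alpha$ via Lemma \ref{lem.Ta}) for the lower bound. If anything, your use of the strict positivity $\Delta_\lambda(x)>0$ makes the strict inequality $R_1(x)<R_3(x)$ slightly more explicit than the paper's write-up, which only records $B_1(x)\leq B_3(x)$.
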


By Lemma \ref{lem.tech.delta} (ii), $R_j(x)=R_j(-x)$ for $j\in\{1,3\}.$ To derive closed-form expressions of the HPD, set $$\mathcal{T}_{\alpha} =\{ x: |x|>t_\alpha\}$$ 
and define the following four regimes
\begin{align*}
	\mathcal{X}_{\textnormal{I}} &:= \big\{x\in \mathcal{T}_{\alpha} : |x| > \lambda + R_1(x) \big \}, \\
	\mathcal{X}_{\textnormal{II}} &:= \big\{x\in \mathcal{T}_{\alpha} :  -\lambda + R_3(x) < x \leq \lambda + R_1(x) \big \}, \\
	\mathcal{X}_{\textnormal{III}} &:= \big\{x\in \mathcal{T}_{\alpha} :   |x| \leq -\lambda + R_3(x)\big \}, \\
	\mathcal{X}_{\textnormal{IV}} &:= \big\{x\in \mathcal{T}_{\alpha} :  -x \in \mathcal{X}_{\textnormal{II}}  \big \}.
\end{align*}
By Lemma \ref{lem.well_def}, $R_1(x)$ and $R_3(x)$ are finite for all $x\in \mathcal{T}_{\alpha}$ and thus these sets are well-defined.  Lemma \ref{lem.partition}  shows that every $x\in \mathcal{T}_{\alpha}$ belongs to exactly one of these regimes, and Figure \ref{figR} provides an illustration of the functions $R_1$, $R_2$ and $R_3$.

\begin{lem}\label{lem.partition}
	The sets $\mathcal{X}_{\textnormal{I}}-\mathcal{X}_{\textnormal{IV}}$ form a partition of $\mathcal{T}_{\alpha}.$
\end{lem}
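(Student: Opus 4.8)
The plan is to exploit the built-in symmetry to reduce the statement to the half-line, to establish two structural inequalities for $R_1$ and $R_3$, and then to read off the partition by a short case distinction on the sign of $x$. Since $R_1,R_3$ are even (Lemma \ref{lem.tech.delta}(ii)) and $\mathcal{T}_\alpha$ is symmetric, the conditions defining $\mathcal{X}_{\textnormal{I}}$ and $\mathcal{X}_{\textnormal{III}}$ are invariant under $x\mapsto -x$, whereas $\mathcal{X}_{\textnormal{IV}}=-\mathcal{X}_{\textnormal{II}}$ by definition. Rewriting the four defining conditions in terms of $t:=|x|$ and the sign of $x$, I would record that for $x>0$ membership in $\mathcal{X}_{\textnormal{IV}}$ is equivalent to $R_3(x)+|x|<\lambda$, and that for $x<0$ membership in $\mathcal{X}_{\textnormal{II}}$ is equivalent to the same inequality. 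Thus $\mathcal{X}_{\textnormal{II}}$ can reach the negative half-line, and $\mathcal{X}_{\textnormal{IV}}$ the positive half-line, only through points with $R_3(x)+|x|<\lambda$.

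The first inequality I would prove is therefore $R_3(x)+|x|\ge \lambda$ for every $x\in\mathcal{T}_\alpha$, which forbids exactly those points. For $|x|>t_\alpha$, Lemma \ref{lem.Ta} gives $\Pi(0\mid X=x)<1-\alpha$; substituting the expression for $\Pi(0\mid X=x)$ from \eqref{eq.posterior} and rearranging yields $\tfrac{1-w}{w}\alpha g(x)<(1-\alpha)\big(1-\Delta_\lambda(x)\big)$, the same computation that produced \eqref{eq.pt_mass_zero}; when $t_\alpha=-\infty$ this bound holds for all $x$. On $\mathcal{T}_\alpha$, finiteness (Lemma \ref{lem.well_def}) lets me write $G(R_3(x))$ as the argument of $G^{-1}$ in \eqref{eq.Rsdef}; inserting the bound and using $1-\Delta_\lambda(x)=G(|x|-\lambda)+G(-|x|-\lambda)$ gives $G(R_3(x))>1-\tfrac12\big[G(|x|-\lambda)+G(-|x|-\lambda)\big]\ge G(\lambda-|x|)$, the last step because $G(|x|-\lambda)\ge G(-|x|-\lambda)$. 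Monotonicity of $G^{-1}$ then yields $R_3(x)\ge \lambda-|x|$.

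The second step, and the crux, is that $\mathcal{X}_{\textnormal{I}}$ and $\mathcal{X}_{\textnormal{III}}$ are disjoint; by symmetry it suffices to treat $x>0$. Suppose $x$ lay in both: $\mathcal{X}_{\textnormal{I}}$ gives $0<R_1(x)<x-\lambda$ and $\mathcal{X}_{\textnormal{III}}$ gives $R_3(x)\ge x+\lambda$, so $R_3(x)-R_1(x)>2\lambda$. I would contradict this by proving $R_3(x)\le R_1(x)+2\lambda$. On $\mathcal{T}_\alpha$ the levels $G(R_1(x))$ and $G(R_3(x))$ are the arguments in \eqref{eq.Rsdef}, with difference $G(R_3(x))-G(R_1(x))=\tfrac12\Delta_\lambda(x)$, so $R_3(x)\le R_1(x)+2\lambda$ is equivalent to the window inequality $G(R_1(x)+2\lambda)-G(R_1(x))\ge \tfrac12\Delta_\lambda(x)$. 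The key observation is that $F(a):=G(a+2\lambda)-G(a)$ is non-increasing on $[-\lambda,\infty)$, because $|a+2\lambda|\ge|a|$ there and $g$ is symmetric and decreasing on $\mathbb{R}_+$. Since $\mathcal{X}_{\textnormal{I}}$ supplies $0<R_1(x)<x-\lambda$, monotonicity gives $F(R_1(x))\ge F(x-\lambda)=\Delta_\lambda(x)\ge \tfrac12\Delta_\lambda(x)$, which is precisely the window inequality.

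Finally I would assemble the partition. For $x>0$, the inequality $R_3(x)+|x|\ge\lambda$ empties $\mathcal{X}_{\textnormal{IV}}$; the conditions of $\mathcal{X}_{\textnormal{I}},\mathcal{X}_{\textnormal{II}},\mathcal{X}_{\textnormal{III}}$ are pairwise incompatible except for $\mathcal{X}_{\textnormal{I}}\cap\mathcal{X}_{\textnormal{III}}$, excluded above, and they are exhaustive because negating the conditions of $\mathcal{X}_{\textnormal{I}}$ and $\mathcal{X}_{\textnormal{III}}$ gives exactly the condition of $\mathcal{X}_{\textnormal{II}}$. For $x<0$ the same inequality empties $\mathcal{X}_{\textnormal{II}}$ and the identical bookkeeping applies with $\mathcal{X}_{\textnormal{II}}$ and $\mathcal{X}_{\textnormal{IV}}$ interchanged; the point $x=0$, which lies in $\mathcal{T}_\alpha$ only when $t_\alpha=-\infty$, falls into $\mathcal{X}_{\textnormal{III}}$ again by $R_3(0)\ge\lambda$. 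Hence every $x\in\mathcal{T}_\alpha$ belongs to exactly one regime. I expect the main obstacle to be the disjointness of $\mathcal{X}_{\textnormal{I}}$ and $\mathcal{X}_{\textnormal{III}}$: bounding the gap $R_3-R_1$ between two distinct $G^{-1}$-quantiles by $2\lambda$ is not transparent from \eqref{eq.Rsdef}, and the resolution is to recast it as the monotone window integral $F$ and to use that the defining inequality of $\mathcal{X}_{\textnormal{I}}$ places $R_1(x)$ in the monotone regime.
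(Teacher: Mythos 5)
Your proof is correct, and at the decisive step it takes a genuinely different route from the paper's. The paper reduces everything to the two thresholds $-\lambda+R_3(x)$ and $\lambda+R_1(x)$ and disposes of the only nontrivial overlap, $\mathcal{X}_{\textnormal{I}}\cap\mathcal{X}_{\textnormal{III}}$, by invoking the sign-equivalence Lemma \ref{lem.sign.equiv}: membership in $\mathcal{X}_{\textnormal{I}}$ forces $R_2<R_1$ while membership in $\mathcal{X}_{\textnormal{III}}$ forces $R_2\geq R_3$, so $R_3\leq R_2<R_1$, contradicting $R_1\leq R_3$. You instead prove the quantitative bound $R_3(x)-R_1(x)\leq 2\lambda$ directly: the identity $G(R_3(x))-G(R_1(x))=\tfrac12\Delta_\lambda(x)$ from \eqref{eq.Rsdef}, the fact that $F(a):=G(a+2\lambda)-G(a)$ is non-increasing on $[-\lambda,\infty)$ (which uses only that $g$ is symmetric and unimodal), and the identity $F(x-\lambda)=\Delta_\lambda(x)$ together contradict the gap $R_3(x)-R_1(x)>2\lambda$ that joint membership in regimes I and III would require. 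This is self-contained, bypasses $R_2$ and Lemma \ref{lem.sign.equiv} entirely, and costs only a short extra computation. Your argument also makes explicit a point the paper's proof passes over silently: that $\mathcal{X}_{\textnormal{II}}$ lies in the positive half-line and $\mathcal{X}_{\textnormal{IV}}$ in the negative one, so that these two regimes cannot overlap. You obtain this from the inequality $R_3(x)+|x|>\lambda$ on $\mathcal{T}_\alpha$, derived correctly from $\Pi(0\mid X=x)<1-\alpha$ as in the proof of Lemma \ref{lem.well_def}; without some such observation the defining conditions of $\mathcal{X}_{\textnormal{II}}$ and $\mathcal{X}_{\textnormal{IV}}$ could in principle hold simultaneously for small $|x|$. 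All individual computations in your write-up check out against \eqref{eq.Rsdef} and the symmetry relations $G(q)+G(-q)=1$ and $R_j(x)=R_j(-x)$.
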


\begin{figure}[htbp!]
	\centering
	\includegraphics[trim = 0  0 0 0,scale=0.12]{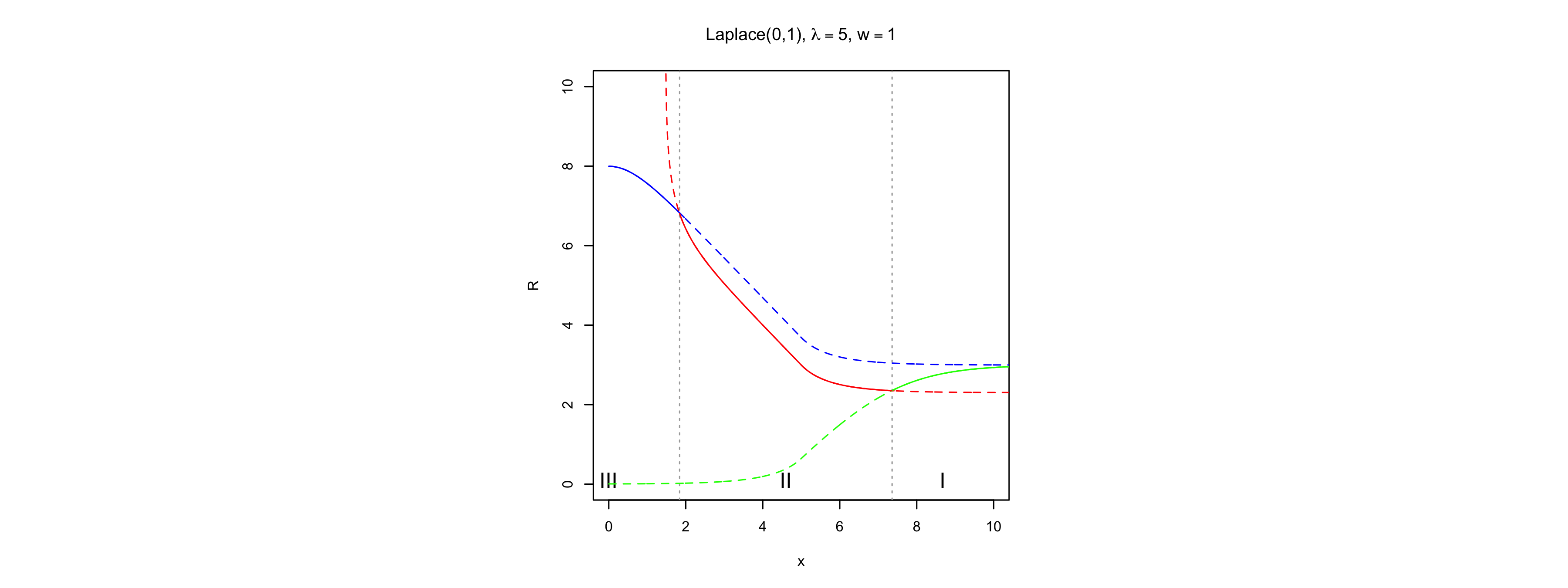}
	\caption{Functions $R_1$ (green), $R_2$ (red), and $R_3$ (blue) for the Laplace density $g$, $w=1$, $\alpha = 0.05$, and $\lambda = 5$. Due to symmetry around zero only the positive real line is displayed. On each regime (III, II, I), the 'active' function is highlighted by a solid line.}
	\label{figR}
\end{figure}

\noindent We can now state the first main result. 

\begin{thm}[Closed-form expression for credible set]\label{thm.LU}
	Let $g\in \mathcal{G}$ and $t_\alpha$ be as defined in Lemma \ref{lem.Ta}. Suppose $X=x$ is observed. If $|x| \leq t_{\alpha}$, $\operatorname{HPD}_{\alpha}(x) = \{0\}$. Otherwise, 
	\[ \operatorname{HPD}_{\alpha}(x) =\big( [  \operatorname{L}_{\alpha}(x), \operatorname{U}_{\alpha}(x)   ] \setminus (-\lambda, \lambda) \big) \cup (\{0\} \setminus \{1-w\})     \]
	with 
	\begin{equation*} 
		\operatorname{U}_{\alpha}(x)=
		\begin{cases}
			x +R_1(x) & x \in \mathcal{X}_{\textnormal{I}} \;, \\
			x + R_2(x)& x \in \mathcal{X}_{\textnormal{II}} \;,\\
			x + R_3(x)   & x \in \mathcal{X}_{\textnormal{III}} \;, \\
			-\lambda														   & x \in \mathcal{X}_{\textnormal{IV}} \; 
		\end{cases}
	\end{equation*} 
	and for any $x,$
	\begin{equation} 
		\operatorname{L}_{\alpha}(x) = -  \operatorname{U}_{\alpha}(-x).
		\label{eq.La_from_Ua}
	\end{equation} 
\end{thm}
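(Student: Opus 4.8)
The plan is to exploit the reflection symmetry of the problem and then to reduce the determination of the credible set to solving a single mass equation for a ``radius'' parameter, whose position relative to the forbidden band $(-\lambda,\lambda)$ selects the regime.

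First I would establish a reflection identity. Since $g$ is symmetric and the prior is invariant under $\theta\mapsto-\theta$, the posterior in \eqref{eq.posterior} satisfies $\pi(\theta\mid X=x)=\pi(-\theta\mid X=-x)$, and hence $\operatorname{HPD}_{\alpha}(x)=-\operatorname{HPD}_{\alpha}(-x)$. This immediately yields \eqref{eq.La_from_Ua}, and it shows that reflecting a regime-$\textnormal{II}$ point $-x>0$ produces the regime-$\textnormal{IV}$ value $\operatorname{U}_{\alpha}(x)=-\lambda$. It therefore suffices to compute $\operatorname{U}_{\alpha}(x)$ for $x>t_{\alpha}$ (so $x>0$) in regimes $\textnormal{I},\textnormal{II},\textnormal{III}$; the endpoints $\operatorname{L}_{\alpha}$ and regime $\textnormal{IV}$ follow by reflection. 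For $x>t_{\alpha}$, Lemma \ref{lem.Ta} gives $\Pi(0\mid x)<1-\alpha$, so for $w<1$ the atom $\{0\}$ enters the HPD and the continuous part must carry the remaining mass $m(x):=1-\alpha-\Pi(0\mid x)>0$. Because on $\{|\theta|>\lambda\}$ the posterior is proportional to the symmetric, unimodal likelihood $g(x-\theta)$, every posterior super-level set has the form $A(r):=[x-r,x+r]\setminus(-\lambda,\lambda)$. I would then show that $r\mapsto\Pi(A(r)\mid x)$ is continuous and strictly increasing on the range where $A(r)$ grows, with limit $1-\Pi(0\mid x)>m(x)$, so there is a unique radius $r^{\ast}(x)$ with $\Pi(A(r^{\ast})\mid x)=m(x)$; this $r^{\ast}$ is the HPD radius and $\operatorname{U}_{\alpha}(x)=x+r^{\ast}(x)$.

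The core computation is to evaluate the mass equation, which after inserting \eqref{eq.posterior} reduces to $\int_{A(r)}g(x-\theta)\,d\theta=(1-\alpha)(1-\Delta_{\lambda}(x))-\tfrac{(1-w)\alpha}{w}g(x)$. Depending on where $r^{\ast}$ sits relative to the breakpoints $x-\lambda$ and $x+\lambda$, the set $A(r)$ takes one of three shapes: $[x-r,x+r]$ (both edges beyond $\lambda$), $[\lambda,x+r]$ (left edge inside the band), or $[x-r,-\lambda]\cup[\lambda,x+r]$ (left edge past $-\lambda$). Substituting $u=\theta-x$ and using $G(-q)=1-G(q)$ together with the definition \eqref{eq.delta_def} of $\Delta_{\lambda}$, each shape turns the integral into a closed form, and solving for $r$ reproduces exactly $R_{1},R_{2},R_{3}$ from \eqref{eq.Rsdef}.

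The main obstacle is to show that the geometric case actually realised coincides with the stated regime, given that the regime conditions are phrased through the boundary radii $R_{1},R_{3}$ rather than the active radius $R_{2}$ in regime $\textnormal{II}$. I would argue by self-consistency and monotonicity: the candidate $R_{1}$ is the true radius iff it lies in the first range, i.e.\ $R_{1}(x)<x-\lambda\Leftrightarrow x>\lambda+R_{1}(x)$, which is precisely $\mathcal{X}_{\textnormal{I}}$; likewise $R_{3}(x)>x+\lambda\Leftrightarrow x\le-\lambda+R_{3}(x)$ is $\mathcal{X}_{\textnormal{III}}$, and $\mathcal{X}_{\textnormal{II}}$ is the complementary band. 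Lemma \ref{lem.well_def} guarantees $0<R_{1}<R_{3}<\infty$ so these candidates are admissible, Lemma \ref{lem.partition} confirms that the three conditions partition $\mathcal{T}_{\alpha}$, and the strict monotonicity of $r\mapsto\Pi(A(r)\mid x)$ then forces $r^{\ast}$ to equal the unique self-consistent candidate. A short continuity check at the interfaces — that $R_{1}=R_{2}$ when $x-R_{1}=\lambda$ and $R_{2}=R_{3}$ when $x-R_{3}=-\lambda$, both obtained by equating the two mass formulas on the shared set — glues the pieces together, and the degenerate case $t_{\alpha}<x\le\lambda$ (where $A(r)$ is empty for small $r$ and regime $\textnormal{I}$ cannot occur) is handled in passing. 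Collecting the pieces yields the displayed form of $\operatorname{HPD}_{\alpha}(x)$, with the atom present exactly when $w\neq1$, i.e.\ the term $\{0\}\setminus\{1-w\}$.
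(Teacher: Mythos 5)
Your proposal is correct and follows essentially the same route as the paper: the reflection identity handles $\operatorname{L}_{\alpha}$ and regime $\mathcal{X}_{\textnormal{IV}}$, and the three mass computations for $[x-r,x+r]$, $[\lambda,x+r]$, and $[x-r,-\lambda]\cup[\lambda,x+r]$ are exactly the calculations the paper performs regime by regime, with the regime matching resting on Lemmas \ref{lem.well_def}, \ref{lem.sign.equiv} and \ref{lem.partition} in both cases. The one genuine (and welcome) organizational difference is that parametrizing the candidate sets as super-level sets $A(r)$ and invoking monotonicity of $r\mapsto\Pi(A(r)\mid x)$ makes the level-set property automatic, whereas the paper must verify it by hand in regime $\mathcal{X}_{\textnormal{II}}$ by comparing $g(x+\lambda)$ with $g(x-\operatorname{U}_{\alpha}(x))$.
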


The credible sets are therefore completely described by the functions $R_1,R_2,R_3.$ By studying the closed-form expressions of these functions, we see that the range of the credible set is influenced by $\alpha,$ a term involving the prior mass of the spike $1-w$ and a term depending on $\Delta_\lambda(x).$ This latter quantity is related to the mass of the distribution of $g(\cdot -X)$ on the interval $[-\lambda,\lambda].$ This mass is cut-out by the prior and redistributed by the Bayes formula on the remaining domain. The credible sets will be particularly difficult to analyze in regime $\mathcal{X}_{\textnormal{II}}$ as $R_2$ can decrease quickly forcing $\operatorname{U}_{\alpha}$ to decrease as well, see also Figure \ref{figR} and Figure \ref{figLU} (right).

The next lemma summarizes several elementary properties of the HPD credible sets. In particular, it shows that if the credible set for observing $X=0$ is not the point mass, none of the regimes is the empty set. 

\begin{lem}
	\label{lem.basic_props_Ualpha}
	If $g\in \mathcal{G},$ then,
	\begin{compactitem}
		\item[(i)] $\{x:|x|\geq \lambda+G^{-1}(1-\alpha/2)\} \subset \mathcal{X}_{\textnormal{I}}$
		\item[(ii)] If $\Pi(0|X=0) < 1 - \alpha,$ then $0 \in \mathcal{X}_{\textnormal{III}}.$
		\item[(iii)] If $\Pi(0|X=0) < 1 - \alpha,$ then $\mathcal{X}_{\textnormal{I}}-\mathcal{X}_{\text{IV}}$ are all non-empty. 
		\item[(iv)] For all sufficiently small slab weights $w,$ we have that $\mathcal{X}_{\textnormal{III}}$ is the empty set. 
		\item[(v)] For all sufficiently large $\lambda,$ $\mathcal{X}_{\textnormal{III}}$ is the empty set, $t_\alpha < \lambda$ and $(t_{\alpha}, \lambda] \subseteq \mathcal{X}_{\textnormal{II}}$.
	\end{compactitem}
\end{lem}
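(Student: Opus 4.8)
The plan is to reduce everything to $x>0$ via the symmetries $R_1(x)=R_1(-x)$, $R_3(x)=R_3(-x)$ (Lemma \ref{lem.tech.delta}) and $\mathcal{X}_{\textnormal{IV}}=-\mathcal{X}_{\textnormal{II}}$, and to work with two reformulations used throughout. Writing $R_j(x)=G^{-1}(A_j(x))$ for the arguments in \eqref{eq.Rsdef} (so $A_1=A_3-\tfrac12\Delta_\lambda$), I record that $\Delta_\lambda(x)=G(x+\lambda)-G(x-\lambda)$ and $1-\Delta_\lambda(x)=G(x-\lambda)+G(-x-\lambda)$, that Lemma \ref{lem.Ta} rewrites as
$$x\in\mathcal{T}_\alpha \iff \tfrac{1-w}{2w}\alpha g(x)<\tfrac{1-\alpha}{2}\big(1-\Delta_\lambda(x)\big),$$
and that applying $G$ to $R_3(x)\lessgtr x+\lambda$ and expanding, the regime-III boundary for $x>0$ is governed by the sign of
$$\psi(x):=\tfrac{1-w}{2w}\alpha g(x)+\tfrac{\alpha}{2}G(x-\lambda)-\big(1-\tfrac{\alpha}{2}\big)G(-x-\lambda),\qquad R_3(x)<x+\lambda\iff\psi(x)>0.$$
For (i), since $g\ge0$ and $\Delta_\lambda\ge0$ we have $A_1(x)\le 1-\alpha/2$, hence $R_1(x)\le G^{-1}(1-\alpha/2)$ and $\lambda+R_1(x)\le\lambda+G^{-1}(1-\alpha/2)\le|x|$, strictly because $g>0$; this is the defining inequality of $\mathcal{X}_{\textnormal{I}}$, after which one checks $|x|>t_\alpha$. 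For (ii), the hypothesis $\Pi(0|X=0)<1-\alpha$ is exactly the $\mathcal{T}_\alpha$-criterion at $x=0$; inserting it into $A_3(0)$ and using $\Delta_\lambda(0)=2G(\lambda)-1$ makes the resulting lower bound collapse to exactly $G(\lambda)$, so $A_3(0)>G(\lambda)$, i.e. $R_3(0)>\lambda$ and $0\in\mathcal{X}_{\textnormal{III}}$.

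For (iii) we have $\mathcal{T}_\alpha=\mathbb{R}$, so $\mathcal{X}_{\textnormal{III}}\ne\emptyset$ by (ii) and $\mathcal{X}_{\textnormal{I}}\ne\emptyset$ by (i). For $\mathcal{X}_{\textnormal{II}}$ I argue by continuity on $[0,\infty)$: the map $\phi_3(x):=x+\lambda-R_3(x)$ is continuous, satisfies $\phi_3(0)<0$ by (ii) and $\phi_3(x)\to+\infty$, so it has a first zero $x^\ast$, the III--II boundary, where $R_3(x^\ast)=x^\ast+\lambda$ and thus $A_3(x^\ast)=G(x^\ast+\lambda)$. Then
$$A_1(x^\ast)=A_3(x^\ast)-\tfrac12\Delta_\lambda(x^\ast)=\tfrac12\big(G(x^\ast+\lambda)+G(x^\ast-\lambda)\big)>G(x^\ast-\lambda),$$
so $R_1(x^\ast)>x^\ast-\lambda$, i.e. $\phi_1(x^\ast):=x^\ast-\lambda-R_1(x^\ast)<0$. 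By continuity $\phi_3>0$ and $\phi_1<0$ just above $x^\ast$, placing those points in $\mathcal{X}_{\textnormal{II}}$; then $\mathcal{X}_{\textnormal{IV}}\ne\emptyset$ by reflection, and Lemma \ref{lem.partition} guarantees these are genuinely distinct regimes.

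For (iv), equation \eqref{eq.pt_mass_zero} forces $t_\alpha\to\infty$ as $w\to0$ (the left factor $\tfrac{w}{1-w}\to0$ while the right side is fixed, so $g(t_\alpha)\to0$), hence for small $w$ one has $\mathcal{T}_\alpha\cap(0,\infty)\subseteq(\lambda+G^{-1}(1-\alpha/2),\infty)$; there $G(x-\lambda)>1-\alpha/2>G(-x-\lambda)$, which gives $\tfrac{\alpha}{2}G(x-\lambda)>(1-\tfrac{\alpha}{2})G(-x-\lambda)$ and so $\psi(x)>0$, i.e. $R_3(x)<x+\lambda$; no such $x$ lies in $\mathcal{X}_{\textnormal{III}}$, which is therefore empty. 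For (v), evaluating the $\mathcal{T}_\alpha$-criterion at $x=\lambda$ shows $\lambda\in\mathcal{T}_\alpha$ for large $\lambda$ (the $g(\lambda)$-term vanishes while the right side tends to $(1-\alpha)/4$), so $t_\alpha<\lambda$. Membership $(t_\alpha,\lambda]\subseteq\mathcal{X}_{\textnormal{II}}$ then splits into three checks: not-$\mathcal{X}_{\textnormal{I}}$ is immediate from $R_1(x)>0\ge x-\lambda$ (Lemma \ref{lem.well_def}); not-$\mathcal{X}_{\textnormal{IV}}$ holds because $x\in\mathcal{T}_\alpha$ yields $A_3(x)>\tfrac12(1+\Delta_\lambda(x))\ge G(\lambda-x)$, hence $R_3(x)>\lambda-x$; and not-$\mathcal{X}_{\textnormal{III}}$ is exactly $\psi(x)>0$. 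The same $\psi(x)>0$, together with the bound $R_3(x)\le G^{-1}(1-\alpha/4)<\lambda$ for $x\ge\lambda$ (where $1-\Delta_\lambda(x)\ge G(x-\lambda)\ge\tfrac12$), gives $\mathcal{X}_{\textnormal{III}}=\emptyset$.

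The main obstacle is this last point: proving $\psi(x)>0$ for all $x\in(t_\alpha,\lambda]$ when $\lambda$ is large. At the endpoints this is clear, since $\psi(t_\alpha)=\tfrac12\big(G(t_\alpha-\lambda)-G(-t_\alpha-\lambda)\big)>0$ by the defining relation for $t_\alpha$, and $\psi(\lambda)\to\alpha/4>0$; but in the interior the single negative term $(1-\tfrac{\alpha}{2})G(-x-\lambda)$ competes with the $g(x)$-term, which can be small for heavy-tailed $g\in\mathcal{G}$ (e.g. Student's $t$), so simply discarding it gives a bound that fails near $t_\alpha$. I would close this by ruling out an interior zero of $\psi$ on $(t_\alpha,\lambda]$ — either through monotonicity of $x\mapsto R_3(x)-x$ obtained from the technical estimates on $\Delta_\lambda$, or by a case split in which the ratio $G(x-\lambda)/G(-x-\lambda)$ dominates for larger $x$ while the $g$-term dominates near $t_\alpha$. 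This estimate is the only genuinely non-routine ingredient; the remaining work is the bookkeeping that the four regime inequalities are mutually consistent on each range, which is underwritten by Lemma \ref{lem.partition}.
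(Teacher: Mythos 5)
Parts (i)--(iii) of your argument track the paper closely. For (iii) the paper runs the intermediate value theorem on $\lambda+R_1(x)-x$ rather than on your $\phi_3(x)=x+\lambda-R_3(x)$; this is a slightly better choice because the resulting boundary point $x^*$ with $x^*=\lambda+R_1(x^*)$ lies in $\mathcal{X}_{\textnormal{II}}$ itself (the defining inequality there is $x\leq \lambda+R_1(x)$), so no perturbation off the boundary is needed. Your version needs $\phi_3>0$ strictly just above the \emph{first} zero of $\phi_3$, which continuity alone does not give (the function could touch zero and return); replacing ``first zero'' by ``last zero'' of $\phi_3$ repairs this. Part (iv) is correct but genuinely different from the paper: you push $t_\alpha\to\infty$ as $w\to 0$ so that all of $\mathcal{T}_\alpha$ lies beyond $\lambda+G^{-1}(1-\alpha/2)$, where $\psi>0$ is easy; the paper instead shows directly that $R_3(x)<\lambda$ on a bounded window because the term $\tfrac{1-w}{2w}\alpha g(x)$ blows up as $w\to 0$. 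Both work.

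The genuine gap is the one you flag yourself in (v): you never prove $\psi(x)>0$ on all of $(t_\alpha,\lambda]$, only at the two endpoints, and you offer two candidate strategies without executing either. Since both ``$\mathcal{X}_{\textnormal{III}}=\varnothing$'' and ``$(t_\alpha,\lambda]\subseteq\mathcal{X}_{\textnormal{II}}$'' rest on this inequality, part (v) is not established. The paper's resolution is a monotonization that you are missing: it discards the $g$-term entirely (the very step you reject as ``failing near $t_\alpha$'') to reduce $R_3(x)<x+\lambda$ to the sufficient condition $G(-\lambda-x)<\tfrac{\alpha}{2}\,G(x-\lambda)$, and then observes that $x\mapsto G(-\lambda-x)-\tfrac{\alpha}{2}G(x-\lambda)$ is \emph{decreasing} on $(0,\infty)$. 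Hence the whole-interval (indeed, whole half-line $x\geq t_\alpha\vee 0$) claim collapses to a single-point check at $x=t_\alpha\vee 0$, and that one endpoint inequality is simply folded into the meaning of ``sufficiently large $\lambda$'': one chooses $\lambda^*$ so that $G(-\lambda-t_\alpha)<\tfrac{\alpha}{2}G(t_\alpha-\lambda)$ for all $\lambda\geq\lambda^*$, using that $t_\alpha\to\infty$ with $\lambda$. In other words, your worry that the $g$-term is indispensable in the interior is unfounded once monotonicity transfers the burden to the left endpoint; what remains delicate (and what you correctly sense is the crux, especially for heavy-tailed $g$) is only the single inequality at $x=t_\alpha$, which the paper absorbs into the hypothesis on $\lambda$ rather than into an interior estimate. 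Supplying this monotonicity step, plus the endpoint arrangement, is what is needed to close your proof.
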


Part (i) states that all sufficiently large values of $x$ will be in regime $\mathcal{X}_{\textnormal{I}}.$ If the point mass at zero does not yet capture $1-\alpha$ of the posterior mass, then $\mathcal{X}_{\textnormal{III}}$ contains $0.$ By a modification of the proof for (ii), the assertion can be strengthened to $[-\eta,\eta] \subseteq \mathcal{X}_{\textnormal{III}}$ for a small $\eta>0.$ If, however, most of the prior mass is allocated for the spike, regime $\mathcal{X}_{\textnormal{III}}$ disappears. The same happens if $\lambda$ becomes large and $t_\alpha \in \mathcal{X}_{\textnormal{II}}.$

The next result provides us with an alternative formula for $\operatorname{U}_{\alpha}$ and $\operatorname{L}_{\alpha}.$ 
\begin{thm}
	\label{thm.altern_Ua}
	Let $g \in \mathcal{G}$ and define 
	\begin{align*}
		H_1(x) &:= x + \big(R_2(x)\wedge R_3(x)\big) \vee R_1(x), \quad 
		H_2(x) := -\lambda \wedge \big(x + R_1(x)\big).
	\end{align*}
	Then, for all $x$ with $|x|>t_\alpha,$
	\begin{align*}
		\operatorname{U}_{\alpha}(x) = H_1(x) \mathbf{1}\big(H_1(x) \geq \lambda\big) + H_2(x)  \mathbf{1}\big(H_1(x)  < \lambda\big).
	\end{align*}
\end{thm}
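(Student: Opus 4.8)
The plan is to match the piecewise $\operatorname{U}_{\alpha}$ of Theorem \ref{thm.LU} with the two building blocks $H_1,H_2$ by first rewriting $H_1$ as a median, then translating the regime inequalities into orderings of $R_1,R_2,R_3,$ and finally deciding in each regime whether $H_1\geq\lambda.$ Throughout I work on $\mathcal{T}_\alpha=\{|x|>t_\alpha\},$ where by Lemma \ref{lem.well_def} we have $R_1(x)<R_3(x)<\infty.$ Checking the three orderings of $R_2$ relative to $R_1<R_3$ gives the median representation
\begin{align*}
H_1(x)-x=(R_2\wedge R_3)\vee R_1=\operatorname{med}(R_1,R_2,R_3)=
\begin{cases}
R_1, & R_2\leq R_1,\\
R_2, & R_1\leq R_2\leq R_3,\\
R_3, & R_2\geq R_3,
\end{cases}
\end{align*}
with the degenerate values $R_j\in\{\pm\infty\}$ (when a $G^{-1}$-argument leaves $(0,1)$) absorbed into the corresponding inequalities. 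Thus $\operatorname{U}_\alpha$ will agree with $H_1$ exactly when the median selects the $R_j$ prescribed by the regime and $H_1\geq\lambda,$ and with $H_2=-\lambda\wedge(x+R_1)$ otherwise.

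Next I would convert the regime boundaries into statements about $R_2.$ Comparing the defining arguments in \eqref{eq.Rsdef} by monotonicity of $G^{-1},$ and simplifying with $G(q)+G(-q)=1$ and $\Delta_\lambda(x)=G(\lambda-x)-G(-\lambda-x),$ yields the pointwise equivalences, valid for every $x\in\mathcal{T}_\alpha,$
\begin{align*}
R_2(x)\geq R_1(x)\iff x\leq\lambda+R_1(x),\qquad R_2(x)\geq R_3(x)\iff x\leq-\lambda+R_3(x).
\end{align*}
Hence on $\mathcal{X}_{\textnormal{I}}\cap\{x>0\}$ one has $R_2\leq R_1$ (median $R_1$), on $\mathcal{X}_{\textnormal{II}}$ one has $R_1\leq R_2\leq R_3$ (median $R_2$), and on $\mathcal{X}_{\textnormal{III}}$ the condition $|x|\leq-\lambda+R_3$ gives $R_3\geq|x|+\lambda\geq x+\lambda$ and thus $R_2\geq R_3$ (median $R_3$). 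The crucial additional ingredient is the inequality $x+R_2(x)\geq\lambda$ on all of $\mathcal{T}_\alpha$: comparing the argument of $R_2$ with $G(\lambda-x)$ shows that $x+R_2(x)\geq\lambda$ is equivalent to $\Delta_\lambda(x)\leq1-\tfrac{1-w}{w}\tfrac{\alpha g(x)}{1-\alpha},$ which is precisely $\Pi(0\mid x)\leq1-\alpha$ and therefore holds on $\mathcal{T}_\alpha$ by Lemma \ref{lem.Ta}. A short consequence is $\mathcal{X}_{\textnormal{II}}\subseteq\{x>0\}$: if $x\leq0$ lay in $\mathcal{X}_{\textnormal{II}},$ then $R_2\geq\lambda-x\geq\lambda$ while its lower boundary forces $R_3<x+\lambda\leq\lambda,$ contradicting $R_2<R_3;$ by reflection $\mathcal{X}_{\textnormal{IV}}\subseteq\{x<0\}.$

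With these tools the four regimes are verified directly. On $\mathcal{X}_{\textnormal{I}}$ with $x>0$ the median is $R_1$ and $H_1=x+R_1>\lambda,$ so $\operatorname{U}_\alpha=H_1=x+R_1;$ with $x<0$ one has $x+R_1<-\lambda,$ while $x\notin\mathcal{X}_{\textnormal{III}}$ gives $R_3<|x|+\lambda,$ i.e. $x+R_3<\lambda,$ so $H_1\leq x+R_3<\lambda$ and $\operatorname{U}_\alpha=H_2=x+R_1.$ On $\mathcal{X}_{\textnormal{II}}$ (where $x>0$) the median is $R_2$ and $H_1=x+R_2\geq\lambda,$ so $\operatorname{U}_\alpha=H_1=x+R_2.$ On $\mathcal{X}_{\textnormal{III}}$ the median is $R_3$ and $x+R_3\geq x+|x|+\lambda\geq\lambda,$ so $\operatorname{U}_\alpha=H_1=x+R_3.$ Finally, on $\mathcal{X}_{\textnormal{IV}}$ (where $x<0$), unfolding $-x\in\mathcal{X}_{\textnormal{II}}$ and using $R_j(x)=R_j(-x)$ for $j\in\{1,3\}$ gives $x+R_1\geq-\lambda$ (so $H_2=-\lambda$) and $x+R_3<\lambda$ (so $H_1\leq x+R_3<\lambda$), whence $\operatorname{U}_\alpha=H_2=-\lambda.$ The formula for $\operatorname{L}_\alpha$ then follows from \eqref{eq.La_from_Ua}.

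The step I expect to be the main obstacle is the inequality $x+R_2(x)\geq\lambda$ on $\mathcal{T}_\alpha$ and its identification with the posterior mass at zero via Lemma \ref{lem.Ta}: it is what simultaneously supplies $H_1\geq\lambda$ in regime $\mathcal{X}_{\textnormal{II}}$ and pins down the signs in $\mathcal{X}_{\textnormal{II}}$ and $\mathcal{X}_{\textnormal{IV}},$ and it hinges on the correct rearrangement of the argument of $R_2$ in terms of $\Delta_\lambda.$ A secondary nuisance is bookkeeping at the degenerate values where a $G^{-1}$-argument leaves $(0,1),$ so that $R_j\in\{\pm\infty\}$ and the $\wedge/\vee$ operations and all equivalences remain valid in the extended sense.
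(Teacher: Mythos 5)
Your proof is correct and follows essentially the same route as the paper: a regime-by-regime case check in which the order relations among $R_1,R_2,R_3$ are read off from the sign equivalences $R_2-R_1 \sim \lambda+R_1-x$ and $R_2-R_3 \sim R_3-x-\lambda$ (the paper's Lemma \ref{lem.sign.equiv}, which you re-derive), followed by verifying $H_1\geq\lambda$ or $H_1<\lambda$ in each regime. The only differences are cosmetic: you establish $x+R_2(x)>\lambda$ on all of $\mathcal{T}_\alpha$ by direct algebra reducing to Lemma \ref{lem.Ta}, and $x+R_3(x)\geq\lambda$ on $\mathcal{X}_{\textnormal{III}}$ directly from $R_3\geq|x|+\lambda$, where the paper instead invokes the posterior-mass identities \eqref{eq.R2_satisfies} and a positivity-of-mass contradiction.
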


\begin{figure}[htbp!]
	\centering
	\includegraphics[trim = 0 0 0 0, scale=0.50]{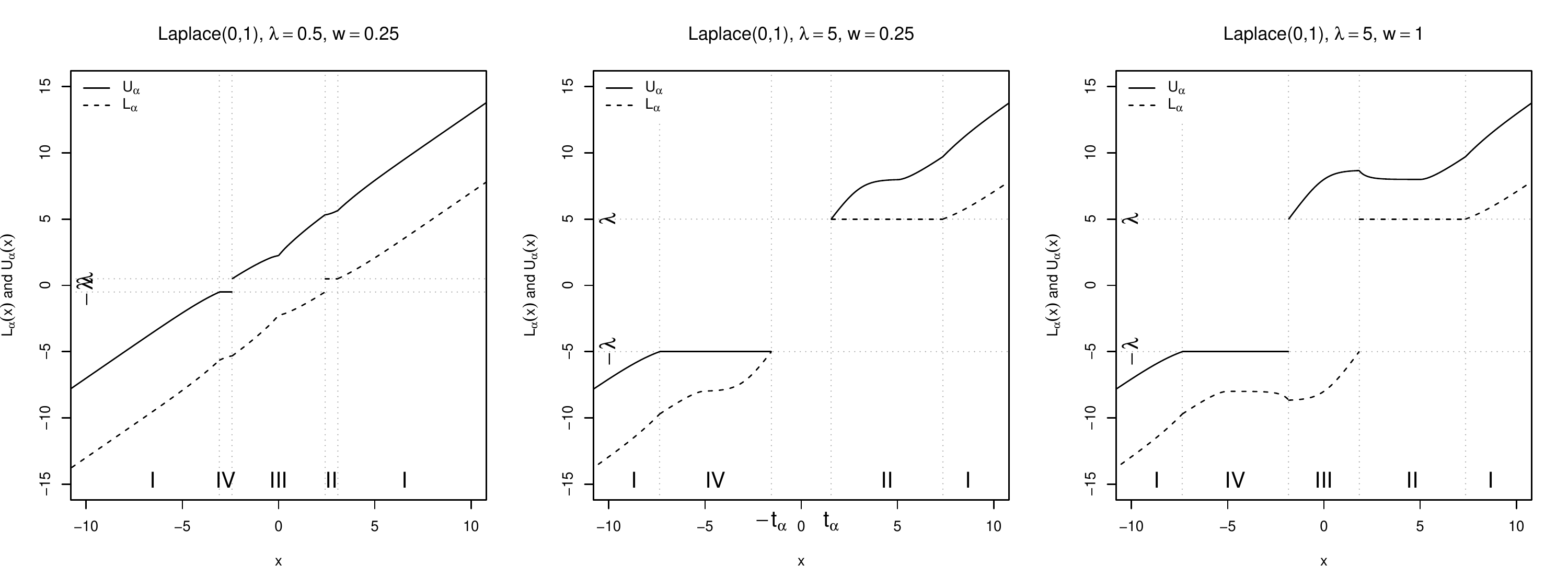}
	\caption{The functions $\operatorname{L}_{\alpha}$ (dashed) and $\operatorname{U}_{\alpha}$ (solid) with $\alpha = 0.05$ and $g$ the Laplace density; using $\lambda = 0.5$, $w = 0.25$ (left), $\lambda = 5$, $w=0.25$ (middle), and $\lambda = 5$, $w=1$ (right). }
	\label{figLU}
\end{figure} 

By Lemma \ref{lem.tech.delta} (iii), $g$ and $\Delta_\lambda$ are decreasing on $(0,\infty).$ Together with the definition of $R_1$ in \eqref{eq.Rsdef} this shows that

\begin{lem}\label{lem.mon}
	Let $g\in \mathcal{G}.$ Then, $R_1$ is an increasing function on $(0,\infty)$ and thus $\operatorname{U}_{\alpha}$ increases on $\mathcal{X}_{\textnormal{I}} \cap(t_{\alpha},\infty)$.
\end{lem}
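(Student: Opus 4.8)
The plan is to reduce both assertions to the monotonicity of the scalar argument fed into $G^{-1}$, combined with the fact that $G^{-1}$ is nondecreasing. Write the argument of $G^{-1}$ in the definition \eqref{eq.Rsdef} of $R_1$ as
\[
\phi(x) := 1-\tfrac{\alpha}2 - \tfrac{1-w}{2w}\,\alpha\, g(x) - \tfrac{1-\alpha}2\,\Delta_\lambda(x).
\]
Since $0<w\le 1$ and $0<\alpha<1$, the two coefficients $\tfrac{1-w}{2w}\alpha$ and $\tfrac{1-\alpha}2$ are nonnegative. By Lemma \ref{lem.tech.delta} (iii), both $g$ and $\Delta_\lambda$ are decreasing on $(0,\infty)$, so $-g$ and $-\Delta_\lambda$ are increasing there; multiplying by the nonnegative constants and adding the constant $1-\alpha/2$ keeps $\phi$ increasing on $(0,\infty)$. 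Because $G$ is a continuous, strictly increasing c.d.f., its (extended) inverse $G^{-1}$ is nondecreasing, and on the range where $R_1$ is finite it is in fact strictly increasing: by Lemma \ref{lem.well_def}, $0<R_1(x)<\infty$ on $\mathcal{T}_\alpha$, so $\phi(x)\in(1/2,1)$ there. Hence $R_1 = G^{-1}\circ\phi$ is increasing on $(0,\infty)$, which is the first assertion.

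For the second assertion I would invoke Theorem \ref{thm.LU}: on $\mathcal{X}_{\mathrm I}$ one has $\operatorname{U}_\alpha(x)=x+R_1(x)$. Restricting to $\mathcal{X}_{\mathrm I}\cap(t_\alpha,\infty)$ places us on the positive half-line, since $t_\alpha\ge 0$ by Lemma \ref{lem.Ta} whenever it is finite, so every such $x$ satisfies $x>0$. There $x\mapsto x$ is increasing and, by the first part, so is $x\mapsto R_1(x)$; a sum of increasing functions is increasing, which gives the claim.

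The only genuine subtlety — and the reason the statement is confined to $(t_\alpha,\infty)$ rather than all of $\mathcal{X}_{\mathrm I}$ — is the two-sided geometry. By Lemma \ref{lem.tech.delta} (ii), $R_1$ is even, hence \emph{decreasing} on $(-\infty,0)$, so on the negative branch $\operatorname{U}_\alpha(x)=x+R_1(x)$ is the sum of an increasing and a decreasing term and need not be monotone; this is precisely the nonmonotonicity of $\operatorname{U}_\alpha$ recorded in Lemma \ref{lem.Ua_decrease}, and confining attention to $x>t_\alpha\ge 0$ is what rules it out. I therefore expect the main point requiring care to be the bookkeeping rather than any computation: verifying the coefficient signs so that $\phi$ is increasing, and checking that intersecting with $(t_\alpha,\infty)$ indeed keeps $x$ positive so the first part applies. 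Strict monotonicity of $R_1$ then follows from strict monotonicity of $g$ on $\mathbb{R}_+$ when $w<1$, and from that of $\Delta_\lambda$ when $\lambda>0$; in the degenerate case $w=1,\lambda=0$ one has $\Delta_\lambda\equiv 0$ and $R_1$ constant, so ``increasing'' should be read as nondecreasing, consistent with the flat-coverage remark in the introduction.
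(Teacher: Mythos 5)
Your proposal is correct and is essentially the paper's own argument, which consists of the single observation preceding the lemma that $g$ and $\Delta_\lambda$ are decreasing on $(0,\infty)$ (Lemma \ref{lem.tech.delta} (iii)) combined with the definition of $R_1$ in \eqref{eq.Rsdef} and Theorem \ref{thm.LU}; you simply spell out the sign bookkeeping and the composition with $G^{-1}$ more explicitly. Your closing remarks on evenness of $R_1$ and the restriction to $x>t_\alpha\geq 0$ are accurate side observations that the paper leaves implicit.
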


The same is not always true in the other regimes as shown in Figure \ref{figLU} and Lemma \ref{lem.Ua_decrease}. This non-monotonic behavior in $\operatorname{U}_{\alpha}$ (and $\operatorname{L}_{\alpha}(x) = - \operatorname{U}_{\alpha}(-x)$) is the main obstacle in the subsequent theoretical analysis on frequentist coverage. 

\begin{lem}\label{lem.Ua_decrease}
	Let $g$ be the Laplace density. Assume $\alpha \in(0,\frac{1}{2})$, $w \in (\sqrt{2\alpha},1]$, and $\lambda \in ( \tfrac 12 \ln(\tfrac{2}{\alpha}) ,  \ln(\frac{1-\alpha}{\alpha} \frac{w}{1-w}))$. Then $\operatorname{U}_{\alpha}$ is strictly decreasing on $(\tfrac 12 \ln(\tfrac{2}{\alpha}),\lambda)$.
\end{lem}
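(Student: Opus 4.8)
The plan is to show that the whole interval $(\tfrac12\ln(\tfrac2\alpha),\lambda)$ lies in regime $\mathcal{X}_{\textnormal{II}}$, so that Theorem \ref{thm.LU} yields the explicit form $\operatorname{U}_\alpha(x)=x+R_2(x)$ there, and then to differentiate the resulting closed-form expression. First I would record the Laplace-specific formulas on $x>0$: $g(x)=\tfrac12 e^{-x}$, $\Delta_\lambda(x)=1-e^{-\lambda}\cosh x$, $G(-\lambda-x)=\tfrac12 e^{-\lambda-x}$, and $G^{-1}(p)=-\ln(2(1-p))$ for $p\in[\tfrac12,1)$. As a preliminary observation, under the stated bounds one has $\Pi(0\mid X=0)<1-\alpha$: this is equivalent to $\lambda<\ln\!\big(\tfrac{2w(1-\alpha)}{(1-w)\alpha}\big)$, which is implied by (indeed weaker than) the assumed upper bound on $\lambda$. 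Hence $t_\alpha=-\infty$, so $\mathcal{T}_\alpha=\mathbb{R}$ and, by Lemma \ref{lem.well_def}, $R_1(x),R_3(x)$ are positive and finite for every $x$. In particular Lemma \ref{lem.basic_props_Ualpha}(v) is \emph{not} available here, since $0\in\mathcal{X}_{\textnormal{III}}$, so the regime membership must be checked by hand.

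Next I would verify the two defining inequalities of $\mathcal{X}_{\textnormal{II}}$ on the interval. The upper one, $x\le\lambda+R_1(x)$, is immediate because $R_1(x)>0$ and $x<\lambda$, so $x<\lambda<\lambda+R_1(x)$. For the lower one, $x>-\lambda+R_3(x)$, I would insert the Laplace form of $R_3$ (whose $G^{-1}$-argument is checked to exceed $\tfrac12$), rewrite it as $e^{-(x+\lambda)}<2\big(1-B(x)\big)$ with $2(1-B(x))=\tfrac{1-w}{2w}\alpha e^{-x}+\alpha e^{-\lambda}\cosh x$, and reduce it after multiplying through by $e^{x}$ to
\[ \tfrac{1-w}{2w}\alpha+e^{-\lambda}\Big(\tfrac\alpha2 e^{2x}-\big(1-\tfrac\alpha2\big)\Big)>0. \]
This is exactly where the left endpoint enters: for $x\ge\tfrac12\ln(\tfrac2\alpha)$ one has $\tfrac\alpha2 e^{2x}\ge 1>1-\tfrac\alpha2$, so both summands are nonnegative. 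Thus $(\tfrac12\ln(\tfrac2\alpha),\lambda)\subseteq\mathcal{X}_{\textnormal{II}}$.

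On this interval I would then compute the $G^{-1}$-argument of $R_2$ and show it lies in $(\tfrac12,1)$, so that $R_2(x)=-\ln F(x)$ with
\[ F(x)=\tfrac{1-w}{w}\alpha e^{-x}+\alpha e^{-\lambda+x}-(1-\alpha)e^{-\lambda-x}. \]
Here the assumption $w>\sqrt{2\alpha}$ is used: it gives $\tfrac{1-w}{w}\alpha e^{-x}<\sqrt{\tfrac\alpha2}\,e^{-x}<\tfrac\alpha2$ on the interval, which together with $x<\lambda$ yields $0<F(x)<1$ and hence the argument in $(\tfrac12,1)$. Consequently $\operatorname{U}_\alpha(x)=x-\ln F(x)$, so $\operatorname{U}_\alpha'(x)=1-F'(x)/F(x)$, which is negative precisely when $F'(x)>F(x)$.

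The final step is the clean computation
\[ F'(x)-F(x)=2e^{-x}\Big((1-\alpha)e^{-\lambda}-\tfrac{1-w}{w}\alpha\Big), \]
whose sign is positive for all $x$ exactly when $e^{-\lambda}>\tfrac{1-w}{w}\tfrac{\alpha}{1-\alpha}$, i.e. when $\lambda<\ln\!\big(\tfrac{1-\alpha}{\alpha}\tfrac{w}{1-w}\big)$ — the assumed upper bound. Since $F>0$, this forces $F'/F>1$ and hence $\operatorname{U}_\alpha'(x)<0$ throughout $(\tfrac12\ln(\tfrac2\alpha),\lambda)$. I expect the main obstacle to be the regime bookkeeping in the two middle steps: because $0\in\mathcal{X}_{\textnormal{III}}$ one must verify directly that the interval avoids both $\mathcal{X}_{\textnormal{III}}$ and $\mathcal{X}_{\textnormal{I}}$ and that the $G^{-1}$-argument defining $R_2$ stays in $(\tfrac12,1)$, which is where all three hypotheses ($w>\sqrt{2\alpha}$ and both bounds on $\lambda$) are consumed; once the explicit form $x-\ln F(x)$ is secured, the strict monotonicity is a one-line derivative-sign check in which the upper bound on $\lambda$ appears.
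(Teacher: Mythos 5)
Your proposal is correct and follows essentially the same route as the paper: establish $t_\alpha=-\infty$, show $(\tfrac12\ln(\tfrac2\alpha),\lambda)\subset\mathcal{X}_{\textnormal{II}}$ so that $\operatorname{U}_\alpha(x)=x+R_2(x)$, and then show the derivative is negative, with the upper bound on $\lambda$ entering exactly as in the paper's comparison $s_1<s_3$ (your $F'-F=2e^{-x}\big((1-\alpha)e^{-\lambda}-\tfrac{1-w}{w}\alpha\big)>0$ is the identical computation in the form $R_2=-\ln F$). The only cosmetic differences are that you verify the two defining inequalities of $\mathcal{X}_{\textnormal{II}}$ directly rather than excluding $\mathcal{X}_{\textnormal{I}}$ and $\mathcal{X}_{\textnormal{III}}$ and invoking Lemma \ref{lem.partition}, and that you use $w>\sqrt{2\alpha}$ to check $0<F<1$ where the paper uses it to ensure the interval of admissible $\lambda$ is non-empty.
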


The HPD consists of $\{0\}$ and one or two intervals, depending on the regime. We refer to the Lebesgue measure of the credible set as the "length" of the credible set. With the expression for the $\operatorname{HPD}_{\alpha}(X)$ in Theorem \ref{thm.LU}, it is easy to see that the length of the $(1-\alpha)$- HPD credible set is $2R_1(X),$ $R_2(X)+X-\lambda,$ $2R_3(X)-2\lambda$ and $R_2(-X) - X - \lambda$ in regime $\mathcal{X}_{\text{I}}-\mathcal{X}_{\text{IV}},$ respectively. In Figure \ref{figsize} it can be seen that the length of the credible sets can be considerably smaller than that of the nominal confidence interval.

\begin{figure}[htbp!]
	\centering
	\includegraphics[trim = 0 0 0 0, scale=0.50]{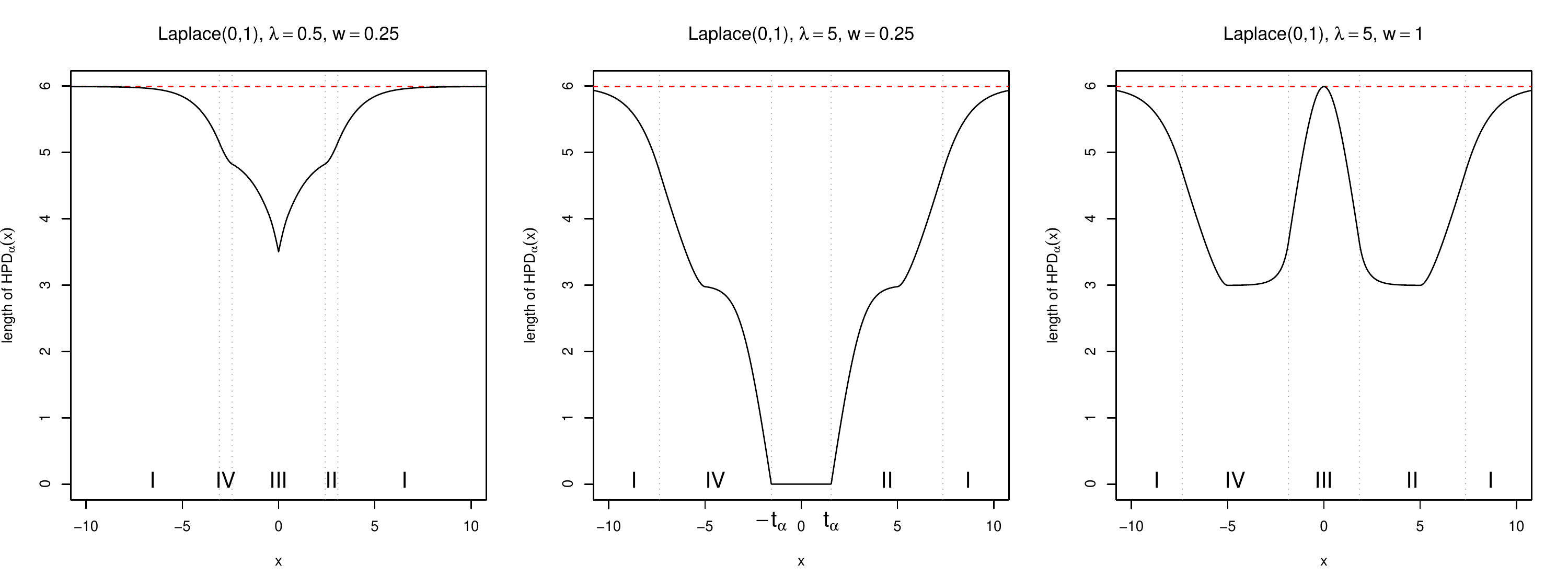}
	\caption{Length of $0.95$-HPD credible set as function of $x$ for the Laplace density $g$ and $\lambda = 0.5$, $w = 0.25$ (left), $\lambda = 5$, $w=0.25$ (middle), and $\lambda = 5$, $w=1$ (right). The length of the nominal confidence interval $2G^{-1}(1-\alpha/2)$ is plotted as dashed red line.}
	\label{figsize}
\end{figure}

\section{Coverage}\label{sec.coverage}
The frequentist coverage of the credible set is given by 
$$C(\theta_0) := \mathbb{P}_{\theta_0}\big(\theta_0 \in \operatorname{HPD}_{\alpha}(X)\big).$$
If $w<1,$ then $0\in \operatorname{HPD}_{\alpha}(X)$ for all $X$ and the frequentist coverage for $\theta_0=0$ is one. By construction the prior puts no mass on $[-\lambda,\lambda]\setminus\{0\}$ leading to zero coverage in this region. We study the interesting case $|\theta_0|>\lambda.$

Recall that for $|X|\leq t_\alpha$ the credible set is $\{0\}$ and for $|X|>t_\alpha$ the credible set is given by $ [\operatorname{L}_{\alpha}(X),\operatorname{U}_{\alpha}(X)]$ with $\operatorname{U}_{\alpha}, \operatorname{L}_{\alpha}$ as defined in Theorem \ref{thm.LU}. Thus, for $|\theta_0| > \lambda,$ the frequentist coverage  can be rewritten as
\begin{align}
	C(\theta_0) = \mathbb{P}_{\theta_0}\big(\theta_0 \in [\operatorname{L}_{\alpha}(X),\operatorname{U}_{\alpha}(X)] \cap \{|X| > t_{\alpha}\}\big).
	\label{eq.credible_rewritten}
\end{align}
Numerical simulations of $C(\theta_0)$ can be found in Figure \ref{figcov}. The behavior of the frequentist coverage $C(\theta_0)$ is determined by the regimes of the $x$ for which $\theta_0 \in [\operatorname{L}_{\alpha}(x),\operatorname{U}_{\alpha}(x)].$ This is displayed in Figure \ref{figcov} by line colors representing averages of the 'active' regimes with green, red and blue for regime $\mathcal{X}_{\textnormal{I}}, \mathcal{X}_{\textnormal{II}}$ and $\mathcal{X}_{\textnormal{III}},$ respectively.

The first result uses this formula to show that the frequentist coverage becomes smaller at any point $\theta_0$ as the spike at zero in the spike and slab prior gets more mass. While this is a qualitative result, we will later quantify the loss of frequentist coverage. 
\begin{lem}
	\label{lem.decrease_cov}
	Let $g \in \mathcal{G}.$ For any $|\theta_0|>\lambda,$ the frequentist coverage $C(\theta_0)$ decreases if the mixing weight on the slab prior distribution decreases.
\end{lem}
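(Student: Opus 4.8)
The plan is to bypass the explicit regime-dependent formulas for $\operatorname{L}_\alpha,\operatorname{U}_\alpha$ and to work directly with the posterior, reducing the event $\{\theta_0\in\operatorname{HPD}_\alpha(X)\}$ to a single inequality whose dependence on $w$ is transparent. Fix $\theta_0$ with $|\theta_0|>\lambda.$ By the HPD construction around \eqref{eq.HPD_as_level_set}, whenever $\Pi(0|x)<1-\alpha$ the credible set is $\{0\}$ together with a closed level set $\{|\theta|>\lambda: g(x-\theta)\ge c\}$ of the continuous part, the threshold $c=c(x)$ being fixed by the requirement that the total mass equal $1-\alpha.$ Since $g\in\mathcal{G}$ is symmetric and unimodal, such a level set equals $[x-s,x+s]\setminus(-\lambda,\lambda)$ with $s=g^{-1}(c),$ and the posterior mass of $\{0\}\cup\{|\theta|>\lambda: g(x-\theta)\ge c\}$ is monotone in $c.$ I would use this to show that $\theta_0\in\operatorname{HPD}_\alpha(x)$ holds precisely when
\[
\Pi\big(\{0\}\,\big|\,x\big)+\Pi\big([x-s_0,x+s_0]\setminus(-\lambda,\lambda)\,\big|\,x\big)\le 1-\alpha,\qquad s_0:=|x-\theta_0|,
\]
i.e. when the posterior mass of the atom at $0$ together with the continuous points at least as likely as $\theta_0$ does not exceed $1-\alpha.$ This single inequality also correctly encodes the case $\Pi(0|x)\ge 1-\alpha$ (equivalently $|x|\le t_\alpha$ by Lemma \ref{lem.Ta}), where its left-hand side is $\ge\Pi(0|x)\ge1-\alpha$ and $\theta_0$ is excluded.

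Next I would isolate the dependence on $w.$ Writing the common denominator $D(x)=(1-w)g(x)+w\big(1-\Delta_\lambda(x)\big)$ from \eqref{eq.posterior} and setting the $w$-free quantity $N(x,s_0):=\int_{[x-s_0,x+s_0]\setminus(-\lambda,\lambda)}g(x-\theta)\,d\theta,$ the membership inequality becomes
\[
\phi_x(w):=\frac{(1-w)g(x)+w\,N(x,s_0)}{(1-w)g(x)+w\big(1-\Delta_\lambda(x)\big)}\le 1-\alpha .
\]
Here both $N(x,s_0)$ and $1-\Delta_\lambda(x)$ are independent of $w,$ and the inclusion $[x-s_0,x+s_0]\setminus(-\lambda,\lambda)\subseteq\mathbb{R}\setminus(-\lambda,\lambda)$ gives $N(x,s_0)\le 1-\Delta_\lambda(x).$ A direct differentiation then yields
\[
\phi_x'(w)=\frac{g(x)\big(N(x,s_0)-(1-\Delta_\lambda(x))\big)}{D(x)^2}\le 0,
\]
so $\phi_x$ is non-increasing in $w$ for every $x.$ Consequently the acceptance region $\{x:\phi_x(w)\le1-\alpha\}$ is non-decreasing in $w,$ and since
\[
C(\theta_0)=\int \mathbf{1}\big(\phi_x(w)\le1-\alpha\big)\,g(x-\theta_0)\,dx,
\]
the coverage is non-decreasing in $w$; equivalently it decreases as the slab weight $w$ decreases, which is the claim.

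The routine parts are the differentiation and the integral representation of $C(\theta_0).$ The step I expect to be the main obstacle, and the one deserving careful justification, is the first reduction: translating the geometric, regime-dependent description of $\operatorname{HPD}_\alpha(x)$ into the clean posterior-mass inequality, handling the point mass at $0$ (always included once $\Pi(0|x)<1-\alpha$), the set-valued nature of the level set, and the boundary case $|x|\le t_\alpha$ in a unified way. Working instead directly from the closed-form $\operatorname{L}_\alpha,\operatorname{U}_\alpha$ of Theorem \ref{thm.LU} would force one to track how the four regimes $\mathcal{X}_{\textnormal{I}}$--$\mathcal{X}_{\textnormal{IV}}$ shift with $w$; the posterior-based reformulation is what makes the monotonicity in $w$ elementary and sidesteps this bookkeeping.
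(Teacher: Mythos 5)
Your argument is correct, and it takes a genuinely different route from the paper. The paper's proof stays with the closed-form endpoints: it notes that $R_1,R_2,R_3$ are monotone in $w$ pointwise, transfers this through the representation of Theorem \ref{thm.altern_Ua} to conclude that $\operatorname{U}_{\alpha}$ grows and $\operatorname{L}_{\alpha}$ shrinks as $w$ increases, separately checks via \eqref{eq.pt_mass_zero} that $t_\alpha$ decreases as $w$ increases, and concludes that the set $[\operatorname{L}_{\alpha}(x),\operatorname{U}_{\alpha}(x)]\cap\{|x|>t_\alpha\}$ is monotone in $w$ for every $x$. You instead bypass the regime structure entirely by characterizing $\{\theta_0\in\operatorname{HPD}_\alpha(x)\}$ as the single posterior-mass inequality $\phi_x(w)\leq 1-\alpha$ and computing $\phi_x'(w)=g(x)\big(N(x,s_0)-(1-\Delta_\lambda(x))\big)/D(x)^2\leq 0$; the algebra checks out, and the monotone-inclusion of the acceptance region in $w$ then gives the claim. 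The reduction step you flag as delicate is indeed the only place requiring care, and it goes through: since $|\theta_0|>\lambda$, $\theta_0$ is an endpoint of the annulus $\{s^*<|\theta-x|\leq s_0\}$, so this annulus meets $\{|\theta|>\lambda\}$ in a set of positive Lebesgue measure whenever $s_0>s^*$, which rules out the flat stretches of $s\mapsto\Pi([x-s,x+s]\setminus(-\lambda,\lambda)\,|\,x)$ that could otherwise break the equivalence; the same observation gives $N(x,s_0)>0$ and hence correctly excludes $\theta_0$ when $\Pi(0|x)\geq 1-\alpha$, so the case $|x|\leq t_\alpha$ needs no separate treatment. What your approach buys is self-containedness and transparency: it needs neither Theorem \ref{thm.LU} nor Theorem \ref{thm.altern_Ua} nor the monotonicity of $t_\alpha$, and the sign of the $w$-derivative makes the mechanism (the atom and the cut-out mass on $[-\lambda,\lambda]$ being redistributed) explicit. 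What the paper's route buys is reuse of machinery already established for the coverage bounds, at the cost of tracking how $R_1,R_2,R_3$, $H_1,H_2$ and $t_\alpha$ all move with $w$.
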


It is intuitively clear that the frequentist coverage should be symmetric in the sense that $C(\theta_0)=C(-\theta_0).$ To verify this, observe that $-X\sim \mathcal{N}(\theta_0,1)$ is equivalent to $X\sim \mathcal{N}(-\theta_0,1).$ By \eqref{eq.La_from_Ua},
\begin{align*}
	C(-\theta_0) &= \mathbb{P}_{-\theta_0}\big(-\theta_0 \in \operatorname{HPD}_{\alpha}(X)\big)
	= \mathbb{P}_{\theta_0}\big(-\theta_0 \in \operatorname{HPD}_{\alpha}(-X)\big)\\
	&= \mathbb{P}_{\theta_0}\big(\theta_0 \in \operatorname{HPD}_{\alpha}(X)\big) =C(\theta_0).
\end{align*}
Thus, from now on it will be enough to study the frequentist coverage for $\theta_0>\lambda.$ To better understand the frequentist coverage, we moreover define the lower and the upper coverage by
\begin{align}
	C^{-}(\theta_0) = \mathbb{P}_{\theta_0}\big(\theta_0 \in [\operatorname{L}_{\alpha}(X),X] \cap \{|X| > t_{\alpha}\}\big)
	\label{eq.coverage_lower}
\end{align}
and
\begin{align}
	C^{+}(\theta_0) = \mathbb{P}_{\theta_0}\big(\theta_0 \in (X, \operatorname{U}_{\alpha}(X)] \cap \{|X| > t_{\alpha}\}\big),
	\label{eq.coverage_upper}
\end{align}
respectively. It can happen that $\operatorname{L}_{\alpha}(x)>x.$ In this case $[\operatorname{L}_{\alpha}(x),x]$ is defined as the empty set. This is also the major complication in the proof of the following result.

\begin{lem}\label{lem.sum_cov}
	For $\theta_0>\lambda,$
	\begin{align*}
		C(\theta_0)=C^{-}(\theta_0)+C^{+}(\theta_0).
	\end{align*}
\end{lem}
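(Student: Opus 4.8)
The plan is to split the event defining $C(\theta_0)$ at the observation $X$ and to match the two halves with the events defining $C^-$ and $C^+$. Since the $C^-$-event $\{\theta_0\in[\operatorname{L}_\alpha(X),X]\}$ forces $\theta_0\le X$ while the $C^+$-event $\{\theta_0\in(X,\operatorname{U}_\alpha(X)]\}$ forces $\theta_0>X$, the two are automatically disjoint, so it suffices to show that the coverage event $\{\theta_0\in[\operatorname{L}_\alpha(X),\operatorname{U}_\alpha(X)]\}\cap\{|X|>t_\alpha\}$ decomposes, according to the sign of $\theta_0-X$, exactly into these two events. The one ingredient I would establish first is a mode-containment property: for every $x$ with $|x|>\lambda$ and $|x|>t_\alpha$ one has $\operatorname{L}_\alpha(x)\le x\le\operatorname{U}_\alpha(x)$. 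Indeed, the continuous part of the posterior is proportional to $g(x-\theta)$, whose mode sits at $\theta=x$; when $|x|>\lambda$ this mode lies outside $[-\lambda,\lambda]$ and hence belongs to the (nonempty, since $\Pi(0\mid x)<1-\alpha$) level set $A_c$ of \eqref{eq.HPD_as_level_set}, so by Theorem \ref{thm.LU} it lies in $[\operatorname{L}_\alpha(x),\operatorname{U}_\alpha(x)]$.

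On $\{\theta_0\le X\}$ the matching is then immediate. Because $\theta_0>\lambda$ forces $X\ge\theta_0>\lambda$, mode-containment gives $\theta_0\le X\le\operatorname{U}_\alpha(X)$, so the upper constraint $\theta_0\le\operatorname{U}_\alpha(X)$ is redundant and this half of the coverage event coincides with the $C^-$-event $\{\operatorname{L}_\alpha(X)\le\theta_0\le X\}\cap\{|X|>t_\alpha\}$.

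The delicate half is $\{\theta_0>X\}$, which should reproduce the $C^+$-event $\{X<\theta_0\le\operatorname{U}_\alpha(X)\}\cap\{|X|>t_\alpha\}$. The only possible mismatch is the configuration flagged in the text, namely $\operatorname{L}_\alpha(X)>\theta_0$, in which $[\operatorname{L}_\alpha(X),X]$ is empty yet $\theta_0$ is still counted by $C^+$; I would rule it out by proving $\operatorname{L}_\alpha(X)\le\theta_0$ whenever the $C^+$-event holds. If $|X|>\lambda$ this is again mode-containment, since then $\operatorname{L}_\alpha(X)\le X<\theta_0$ when $X>\lambda$ and $\operatorname{L}_\alpha(X)\le X<-\lambda<\theta_0$ when $X<-\lambda$. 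For $|X|\le\lambda$ the mode is unavailable and I would instead use the regime formulas of Theorem \ref{thm.LU} together with $\operatorname{L}_\alpha(x)=-\operatorname{U}_\alpha(-x)$ from \eqref{eq.La_from_Ua}: on $\mathcal{X}_{\textnormal{II}}$ one has $-X\in\mathcal{X}_{\textnormal{IV}}$, hence $\operatorname{L}_\alpha(X)=-\operatorname{U}_\alpha(-X)=\lambda<\theta_0$; on $\mathcal{X}_{\textnormal{III}}$ the symmetry of the regime gives $\operatorname{L}_\alpha(X)=X-R_3(X)<X\le\lambda<\theta_0$; and on $\mathcal{X}_{\textnormal{IV}}$ one has $\operatorname{U}_\alpha(X)=-\lambda<\theta_0$, so the $C^+$-event is void. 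Regime $\mathcal{X}_{\textnormal{I}}$ cannot occur here, as it requires $|X|>\lambda+R_1(X)>\lambda$. In every case $\operatorname{L}_\alpha(X)\le\theta_0$ on the $C^+$-event, so the two halves agree and $C(\theta_0)=C^-(\theta_0)+C^+(\theta_0)$ follows.

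The main obstacle is precisely this last half with $|X|\le\lambda$, where the posterior mode argument breaks down and one must instead track which regime $X$ falls in and exploit that the lower endpoint clamps to $\lambda$ on $\mathcal{X}_{\textnormal{II}}$ via $\operatorname{L}_\alpha=-\operatorname{U}_\alpha(-\,\cdot\,)$. I expect the boundary configurations $\operatorname{U}_\alpha(X)=X$ and $\operatorname{L}_\alpha(X)=X$ to be harmless, since they concern only a $\mathbb{P}_{\theta_0}$-null set of $X$.
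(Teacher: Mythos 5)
Your proposal is correct and follows essentially the same route as the paper: both reduce the claim to checking, case by case over the regimes, that $\theta_0\in[\operatorname{L}_{\alpha}(x),\operatorname{U}_{\alpha}(x)]$ holds iff $\theta_0$ lies in exactly one of $[\operatorname{L}_{\alpha}(x),x]$ and $(x,\operatorname{U}_{\alpha}(x)]$, and both isolate the same only delicate configuration, namely $x<\operatorname{L}_{\alpha}(x)=\lambda$ in regime $\mathcal{X}_{\textnormal{II}}$, resolved via $\theta_0>\lambda$. Your ``mode-containment'' observation is just a compact repackaging of what the paper gets from Lemma \ref{lem.well_def} and Theorem \ref{thm.LU}, so the arguments are the same in substance.
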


In order to study frequentist coverage, it will be important to 'invert' $\operatorname{U}_{\alpha}$ and $\operatorname{L}_{\alpha}.$ If $\operatorname{L}_{\alpha}$ could be inverted, $\theta_0\in [\operatorname{L}_{\alpha}(X),X]$ would be the same as $X\in [\theta_0,\operatorname{L}_{\alpha}^{-1}(\theta_0)].$ Recall that $\operatorname{U}_{\alpha}, \operatorname{L}_{\alpha}$ are not necessarily monotone, see also Lemma \ref{lem.Ua_decrease} and Figure \ref{figLU}. The inverse functions 
\[\operatorname{U}_{\alpha}^{-1}(y)=\{x \in [-t_\alpha, t_\alpha]^c:\operatorname{U}_{\alpha}(x)=y\}  \quad \text{ and }  \quad  \operatorname{L}_{\alpha}^{-1}(y)=\{x\in [-t_\alpha, t_\alpha]^c:\operatorname{L}_{\alpha}(x)=y\}\]
are therefore set-valued. The next result seems rather obvious. The proof is, however, quite tricky, since $\operatorname{U}_{\alpha}$ and $\operatorname{L}_{\alpha}$ are not continuous everywhere.

\begin{lem}\label{lem.cov_reform}
	If $g \in \mathcal{G}$ and $\theta_0>t_\alpha,$ then the sets $\operatorname{L}_{\alpha}^{-1}(\theta_0)$ and $\operatorname{U}_{\alpha}^{-1}(\theta_0)$ are non-empty. Moreover,
	\begin{align*}
		\mathbb{P}_{\theta_0}\big(\theta_0\leq X\leq \inf \operatorname{L}_{\alpha}^{-1}(\theta_0)\big)\leq C^{-}(\theta_0) \leq \mathbb{P}_{\theta_0}\big(\theta_0\leq  X\leq \sup \operatorname{L}_{\alpha}^{-1}(\theta_0)\big)
	\end{align*}
	and 
	\begin{align*}
		\mathbb{P}_{\theta_0}\big(\big\{\sup \operatorname{U}_{\alpha}^{-1}(\theta_0)\leq X\leq \theta_0\big\}\cap \{|X|>t_\alpha\}\big)\leq C^+(\theta_0) \leq \mathbb{P}_{\theta_0}\big(\inf \operatorname{U}_{\alpha}^{-1}(\theta_0)\leq X\leq \theta_0\big).
	\end{align*}
\end{lem}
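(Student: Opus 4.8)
The plan is to defuse both the non-monotonicity and the discontinuity of $\operatorname{U}_{\alpha}$ and $\operatorname{L}_{\alpha}$ by replacing them, at the single relevant level, with the \emph{continuous} auxiliary functions of Theorem \ref{thm.altern_Ua}. From the elementary inequalities $H_2(x)\le x+R_1(x)\le H_1(x)$ and the formula in Theorem \ref{thm.altern_Ua} one obtains the pointwise sandwich $H_2(x)\le \operatorname{U}_{\alpha}(x)\le H_1(x)$, where $\operatorname{U}_{\alpha}(x)=H_1(x)$ whenever $H_1(x)\ge\lambda$ and $\operatorname{U}_{\alpha}(x)=H_2(x)\le-\lambda$ otherwise; in particular $\operatorname{U}_{\alpha}$ never takes values in $(-\lambda,\lambda)$. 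Working in the range $\theta_0>\lambda$ to which $C^{\pm}$ pertain (Lemma \ref{lem.sum_cov}), this forces the level-set identity $\operatorname{U}_{\alpha}^{-1}(\theta_0)=H_1^{-1}(\theta_0)$ together with $\{x:\operatorname{U}_{\alpha}(x)\ge\theta_0\}=\{x:H_1(x)\ge\theta_0\}$, so that the troublesome inverse is governed by the continuous $H_1$. Via $\operatorname{L}_{\alpha}(x)=-\operatorname{U}_{\alpha}(-x)$ the same sandwich yields the dual description
\[
\operatorname{L}_{\alpha}(x)\le\theta_0 \iff H_1(-x)\ge\lambda \ \text{ or }\ x-R_1(x)\le\theta_0,
\]
with $\operatorname{L}_{\alpha}^{-1}(\theta_0)=\{x: x-R_1(x)=\theta_0,\ H_1(-x)<\lambda\}$.

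First I would settle non-emptiness and compactness. Since $R_1$ and $R_3$ are continuous with finite limits $G^{-1}(1-\alpha/2)$ at $\pm\infty$ and $R_2\wedge R_3\le R_3$ is bounded above there, $H_1$ is continuous with $H_1(x)\to\pm\infty$ as $x\to\pm\infty$. The intermediate value theorem then makes $\operatorname{U}_{\alpha}^{-1}(\theta_0)=H_1^{-1}(\theta_0)$ a non-empty compact set, so that $\inf$ and $\sup$ are finite and attained. For $\operatorname{L}_{\alpha}^{-1}(\theta_0)$ I would use that $x\mapsto x-R_1(x)\to\infty$ while $H_1(-x)\to-\infty<\lambda$ as $x\to\infty$, which produces a solution of $x-R_1(x)=\theta_0$ sitting in the branch $H_1(-x)<\lambda$.

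For the upper-coverage bounds I would argue entirely with $H_1$. If $x<\inf\operatorname{U}_{\alpha}^{-1}(\theta_0)=\inf H_1^{-1}(\theta_0)$, continuity and $H_1(-\infty)=-\infty$ give $H_1(x)<\theta_0$, whence $\operatorname{U}_{\alpha}(x)\le H_1(x)<\theta_0$ and $x$ fails the event defining $C^+$; this is the upper bound $C^+(\theta_0)\le\mathbb{P}_{\theta_0}(\inf\operatorname{U}_{\alpha}^{-1}(\theta_0)\le X\le\theta_0)$. If instead $x>\sup\operatorname{U}_{\alpha}^{-1}(\theta_0)=\sup H_1^{-1}(\theta_0)$, then the intermediate value theorem (using $H_1(+\infty)=+\infty$) forces $H_1(x)>\theta_0\ge\lambda$, so $\operatorname{U}_{\alpha}(x)=H_1(x)\ge\theta_0$ and $x$ lies in the event as soon as $x<\theta_0$ and $|x|>t_\alpha$; this gives the matching lower bound. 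The single endpoint $x=\theta_0$ is $\mathbb{P}_{\theta_0}$-null and can be discarded.

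The lower-coverage bounds are the delicate part and, I expect, the main obstacle. By the dual description above, the event $\{\operatorname{L}_{\alpha}(X)\le\theta_0\le X\}$ is the union of a continuous piece $\{x-R_1(x)\le\theta_0\}$ and a jump piece $\{H_1(-x)\ge\lambda\}$, on which $\operatorname{L}_{\alpha}\le-\lambda$ and hence $\operatorname{L}_{\alpha}$ never equals $\theta_0$; because $\operatorname{L}_{\alpha}$ jumps by at least $2\lambda$ between the two branches (on the branch $H_1(-x)<\lambda$ one has $\operatorname{L}_{\alpha}(x)=\lambda\vee(x-R_1(x))$), this event need not be an interval. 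The crux is to show it is nonetheless sandwiched between $[\theta_0,\inf\operatorname{L}_{\alpha}^{-1}(\theta_0)]$ and $[\theta_0,\sup\operatorname{L}_{\alpha}^{-1}(\theta_0)]$: one must rule out that the jump piece lets the event leak past $\sup\operatorname{L}_{\alpha}^{-1}(\theta_0)$, and, for the lower inclusion, that a jump inserts a sub-interval carrying $\operatorname{L}_{\alpha}>\theta_0$ before the first crossing. I expect this to reduce to controlling how $x\mapsto H_1(-x)$ meets the level $\lambda$ for $x\ge\theta_0$ — essentially showing it does not oscillate around $\lambda$ in that range — after which continuity of $x\mapsto\lambda\vee(x-R_1(x))$ on each branch and the intermediate value theorem close the argument exactly as for $C^+$. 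Combining the two contributions through Lemma \ref{lem.sum_cov} would then finish the proof.
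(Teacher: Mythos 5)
Your reduction of the $C^+$ bounds to the continuous function $H_1$ is sound in spirit: for $\theta_0>\lambda$ one indeed has $\{\operatorname{U}_{\alpha}(x)\geq\theta_0\}=\{H_1(x)\geq\theta_0\}$, and the intermediate value theorem for $H_1$ sandwiches that event as claimed. But you silently identify $\operatorname{U}_{\alpha}^{-1}(\theta_0)$ with the \emph{unrestricted} level set $H_1^{-1}(\theta_0)$, whereas the inverse is by definition restricted to $\{|x|>t_\alpha\}$. When $t_\alpha$ is finite, your argument on the component $(t_\alpha,\infty)$ needs the additional input that $\lim_{x\downarrow t_\alpha}\operatorname{U}_{\alpha}(x)\leq \lambda\vee t_\alpha<\theta_0$; without it one cannot exclude that $H_1$ first reaches the level $\theta_0$ inside $[-t_\alpha,t_\alpha]$ and stays above it just to the right of $t_\alpha$, which would break the upper bound on $C^+$ and even the non-emptiness claim. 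The paper devotes a separate step to the behaviour of $\operatorname{U}_{\alpha},\operatorname{L}_{\alpha}$ at $\pm t_\alpha$ for exactly this reason.

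The genuine gap is in the $C^-$ bounds, and you flag it yourself without closing it. The event $\{\operatorname{L}_{\alpha}(X)\leq\theta_0\leq X\}$ contains the branch $\{H_1(-X)\geq\lambda\}$ on which $\operatorname{L}_{\alpha}(X)\leq-\lambda$, and one must rule out that this branch lets the event leak past $\sup\operatorname{L}_{\alpha}^{-1}(\theta_0)$ (and, dually, that it inserts a stretch with $\operatorname{L}_{\alpha}>\theta_0$ before $\inf\operatorname{L}_{\alpha}^{-1}(\theta_0)$). Your proposal stops at ``I expect this to reduce to showing $H_1(-x)$ does not oscillate around $\lambda$'', which is neither established nor the right invariant. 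The paper's resolution is a structural fact about the discontinuities: on $\{|x|>t_\alpha\}$ the only possible jumps of $\operatorname{L}_{\alpha}$ are from $-\lambda$ to $\lambda$, because a jump from a value strictly below $-\lambda$ up to $\lambda$ would strictly increase the posterior mass of the credible set beyond $1-\alpha$, which is impossible. Since every jump therefore lands at $\lambda<\theta_0$, i.e.\ below the level of interest, a one-sided intermediate value property holds at level $\theta_0$: if $\operatorname{L}_{\alpha}(x^*)<\theta_0$ at some $x^*>\sup\operatorname{L}_{\alpha}^{-1}(\theta_0)$, the function must re-cross $\theta_0$ at some $z>x^*$ (as $\operatorname{L}_{\alpha}\to\infty$), contradicting the definition of the supremum; oscillation of $H_1(-x)$ around $\lambda$ is then harmless. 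Without this credibility/jump argument, or a substitute for it, both $C^-$ inequalities remain unproved.
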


In particular, if we know that for instance $\inf \operatorname{L}_{\alpha}^{-1}(\theta_0)$ is in regime $\mathcal{X}_{\textnormal{I}},$ then, by Theorem \ref{thm.LU}, $\inf \operatorname{L}_{\alpha}^{-1}(\theta_0) - R_1(\inf \operatorname{L}_{\alpha}^{-1}(\theta_0) )=\theta_0$ and this yields the tractable lower bound
\begin{align}
	G\big(R_1(\inf \operatorname{L}_{\alpha}^{-1}(\theta_0))\big) - \frac{1}{2}
	= \mathbb{P}_{\theta_0}\big(\theta_0\leq X\leq \inf \operatorname{L}_{\alpha}^{-1}(\theta_0)\big)\leq C^{-}(\theta_0). 
	\label{eq.coverage_bds_specific}
\end{align}
The same reasoning can be applied to the other regimes and to derive upper bounds for $C^{-}(\theta_0).$

For given $X$ it is easy to compute $L_\alpha(X).$ If $\theta_0$ is known, it is less obvious how to compute elements in the set $\operatorname{L}_{\alpha}^{-1}(\theta_0).$ The next lemma provides an iterative formula for the smallest element in $\operatorname{L}_{\alpha}^{-1}(\theta_0).$

\begin{lem}\label{lem.compute}
	Let $g \in \mathcal{G}$ and $\theta_0> \lambda \vee t_\alpha$ be given. Let $a_0:=\theta_0$ and $a_{k+1}=\theta_0+R_1(a_k)$ for $k=0,\ldots.$ Then, $(a_k)_k$ converges to $\inf \operatorname{L}_{\alpha}^{-1}(\theta_0).$
\end{lem}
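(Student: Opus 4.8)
The plan is to recognize the recursion $a_{k+1}=\theta_0+R_1(a_k)$ as the natural fixed-point iteration for the equation $x=\theta_0+R_1(x)$, and to show that this equation is exactly $\operatorname{L}_{\alpha}(x)=\theta_0$ restricted to $\mathcal{X}_{\textnormal{I}}\cap(0,\infty)$. Indeed, the regimes $\mathcal{X}_{\textnormal{I}}$ and $\mathcal{X}_{\textnormal{III}}$ are invariant under $x\mapsto -x$ (since $\mathcal{T}_\alpha$ is symmetric and $R_1,R_3$ are even by Lemma \ref{lem.tech.delta}), while $\mathcal{X}_{\textnormal{II}}$ and $\mathcal{X}_{\textnormal{IV}}$ are swapped. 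Hence for $x>0$ in $\mathcal{X}_{\textnormal{I}}$ we also have $-x\in\mathcal{X}_{\textnormal{I}}$, and $\operatorname{L}_{\alpha}(x)=-\operatorname{U}_{\alpha}(-x)=-(-x+R_1(-x))=x-R_1(x)$ by Theorem \ref{thm.LU}. Thus $\operatorname{L}_{\alpha}(x)=\theta_0$ is equivalent to $x=\theta_0+R_1(x)$ on this regime, which motivates the iteration.

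First I would prove convergence of $(a_k)$. Since $R_1>0$ (Lemma \ref{lem.well_def}) and $a_0=\theta_0>\lambda>0$, an induction using that $R_1$ is increasing on $(0,\infty)$ (Lemma \ref{lem.mon}) shows $(a_k)$ is non-decreasing; it is bounded because $R_1(x)\le G^{-1}(1-\alpha/2)<\infty$, the two subtracted terms in the definition \eqref{eq.Rsdef} of $R_1$ being non-negative. Therefore $a_k\uparrow a_*$ for some finite $a_*$, and continuity of $R_1$ (continuity of $g$, $\Delta_\lambda$, and $G^{-1}$) gives $a_*=\theta_0+R_1(a_*)$. Because $\theta_0>\lambda$ and $R_1(a_*)>0$, we have $a_*=\theta_0+R_1(a_*)>\lambda+R_1(a_*)$ and $a_*>\theta_0>t_\alpha$, so $a_*\in\mathcal{X}_{\textnormal{I}}$; consequently $\operatorname{L}_{\alpha}(a_*)=a_*-R_1(a_*)=\theta_0$, i.e.\ $a_*\in\operatorname{L}_{\alpha}^{-1}(\theta_0)$ and $\inf\operatorname{L}_{\alpha}^{-1}(\theta_0)\le a_*$.

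The crux is the reverse inequality, for which I would show that \emph{every} $x\in\operatorname{L}_{\alpha}^{-1}(\theta_0)$ lies in $\mathcal{X}_{\textnormal{I}}\cap(0,\infty)$ and hence solves $x=\theta_0+R_1(x)$. Using Theorem \ref{thm.altern_Ua}, which shows that $\operatorname{U}_{\alpha}$ (and thus $\operatorname{L}_{\alpha}=-\operatorname{U}_{\alpha}(-\cdot)$) never takes values in $(-\lambda,\lambda)$ and satisfies $\operatorname{U}_{\alpha}\ge\lambda$ on $\mathcal{X}_{\textnormal{II}}$ and $\mathcal{X}_{\textnormal{III}}$, together with the regime-wise formulas of Theorem \ref{thm.LU}, I would evaluate $\operatorname{L}_{\alpha}(x)$ in each regime and for both signs of $x$, and check that off regime I the value is pinned to $\lambda$ (regime II, and $-x$ in regime IV) or to $(-\infty,-\lambda]$ (regimes III, IV, and all $x<0$), hence never equals $\theta_0>\lambda$. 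With every solution a regime-I fixed point, $x=\theta_0+R_1(x)>\theta_0=a_0$, and an induction mirroring the monotonicity argument (if $a_k\le x$ then $R_1(a_k)\le R_1(x)$ by Lemma \ref{lem.mon}, so $a_{k+1}=\theta_0+R_1(a_k)\le\theta_0+R_1(x)=x$) yields $a_k\le x$ for all $k$, hence $a_*\le x$. Taking the infimum over solutions gives $a_*\le\inf\operatorname{L}_{\alpha}^{-1}(\theta_0)$, and combined with the previous paragraph this proves $a_*=\inf\operatorname{L}_{\alpha}^{-1}(\theta_0)$ (the infimum being attained).

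The main obstacle is the regime classification of the solution set $\operatorname{L}_{\alpha}^{-1}(\theta_0)$: since $\operatorname{L}_{\alpha}$ is neither monotone nor continuous (Lemma \ref{lem.Ua_decrease}), one cannot invert it directly, and spurious solutions in $\mathcal{X}_{\textnormal{II}}$–$\mathcal{X}_{\textnormal{IV}}$ and for negative $x$ must be ruled out carefully. The two enabling facts are the symmetry of the regimes under $x\mapsto-x$ (which collapses the regime-I formula to $x-R_1(x)$) and the exclusion of $(-\lambda,\lambda)$ from the HPD (which keeps every off–regime-I value of $\operatorname{L}_{\alpha}$ away from $\theta_0$). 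I expect the monotone-convergence and fixed-point steps to be routine, with the regime elimination carrying the genuine content.
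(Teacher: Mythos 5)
Your argument is correct and follows essentially the same route as the paper: show $(a_k)$ is increasing and bounded via the positivity and monotonicity of $R_1$, identify the limit as a fixed point of $x=\theta_0+R_1(x)$ lying in $\mathcal{X}_{\textnormal{I}}$, and bound the sequence from above by elements of $\operatorname{L}_{\alpha}^{-1}(\theta_0)$ using the same monotonicity induction. The only differences are cosmetic: the paper relegates the regime classification $\operatorname{L}_{\alpha}^{-1}(\theta_0)\subset\mathcal{X}_{\textnormal{I}}$ to a remark in the main text rather than the proof, and your choice to run the comparison induction against an arbitrary solution $x$ instead of against $\inf\operatorname{L}_{\alpha}^{-1}(\theta_0)$ is a slight tidying that avoids assuming the infimum is attained.
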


For $\theta_0>\lambda$ we have that $\operatorname{L}_{\alpha}^{-1}(\theta_0) \subset \mathcal{X}_{\textnormal{I}}.$ To see this, observe that if $x\in \operatorname{L}_{\alpha}^{-1}(\theta_0)$ then $\operatorname{L}_{\alpha}(x)> \lambda.$ Because of $\operatorname{L}_{\alpha}(x)=-\operatorname{U}_{\alpha}(-x),$ this is equivalent to $\operatorname{U}_{\alpha}(-x)<-\lambda.$ It then follows from the proof of Theorem \ref{thm.altern_Ua} that $x\in \mathcal{X}_{\textnormal{I}}.$ The set $\operatorname{U}_{\alpha}^{-1}(\theta_0)$ might spread over different regimes.

\begin{assump}\label{def.tail.decay}
	Assume $g\in \mathcal{G}$ and for some $\gamma >0$ and a constant $c_*,$
	\begin{align*} 
		G\Big(\frac 32 G^{-1}(t) \Big) < c_* t^{1+\gamma},  \quad \forall \, t\in (0,1), \quad \text{and} \quad g(x) \leq c_* \big( 1-G(x)\big)^\gamma, \quad \forall \, x\geq 0.
	\end{align*}
\end{assump}

The condition essentially requires exponential decay of $g.$ In the next result the condition is checked for a specific class of densities.

\begin{lem}\label{lem.exam}
	For any $\eta \in (0,1],$ consider the density $g(x) = c_\eta e^{-|x|^\eta}$ with normalization constant $c_\eta=1/\int e^{-|x|^\eta} \, dx.$ Then, Assumption \ref{def.tail.decay} holds for some finite $c_*=c_*(\gamma,\eta)$ and any $\gamma < (3/2)^\eta-1.$
\end{lem}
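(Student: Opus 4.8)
The plan is to reduce both displayed inequalities in Assumption~\ref{def.tail.decay} to statements about the behaviour of continuous positive ratios at their endpoints, and to control those endpoints using sharp tail asymptotics of $G$. First I would record that $g(x)=c_\eta e^{-|x|^\eta}$ lies in $\mathcal{G}$: it is positive and continuous, symmetric, and on $\mathbb{R}_+$ the exponent $-x^\eta$ is strictly decreasing (since $\eta>0$), so $g$ is strictly decreasing there. The engine of the whole proof is the tail estimate
\[
1-G(x)=c_\eta\int_x^\infty e^{-t^\eta}\,dt\sim \frac{c_\eta}{\eta}\,x^{1-\eta}e^{-x^\eta},\qquad x\to\infty,
\]
which I would obtain either by the substitution $u=t^\eta$, turning the integral into $\tfrac1\eta\,\Gamma(1/\eta,x^\eta)$ and invoking $\Gamma(a,s)\sim s^{a-1}e^{-s}$, or directly by L'Hôpital applied to the ratio on the right. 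From this I would also extract the behaviour of $G^{-1}$: writing $q=G^{-1}(t)$ and using symmetry $G(q)=1-G(|q|)$ for $q<0$, the relation $t=G(q)$ becomes $t\sim\frac{c_\eta}{\eta}|q|^{1-\eta}e^{-|q|^\eta}$ as $t\to0^+$.

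For the second inequality $g(x)\le c_*(1-G(x))^\gamma$, I would divide and let $x\to\infty$. The asymptotics give
\[
\frac{g(x)}{(1-G(x))^\gamma}\sim \frac{c_\eta}{(c_\eta/\eta)^\gamma}\,x^{-\gamma(1-\eta)}\,e^{-(1-\gamma)x^\eta},
\]
which tends to $0$ because $\gamma<(3/2)^\eta-1\le\tfrac12<1$ forces $1-\gamma>0$. Since the ratio $g/(1-G)^\gamma$ is continuous and strictly positive on $[0,\infty)$ with limit $0$ at infinity, it is bounded, and any $c_*$ exceeding its supremum works; notably this end only uses $\gamma<1$.

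The first inequality is where the threshold $(3/2)^\eta-1$ is born, and is the main obstacle. I would substitute $q=G^{-1}(t)$ and treat the two endpoints of $(0,1)$. As $t\to1^-$ (i.e. $q\to+\infty$) both $G(\tfrac32 q)$ and $t^{1+\gamma}$ tend to $1$, so the ratio tends to $1$ and this end is harmless. The delicate end is $t\to0^+$, i.e. $q\to-\infty$, where by symmetry and the tail estimate $G(\tfrac32 q)=1-G(\tfrac32|q|)\sim\frac{c_\eta}{\eta}(\tfrac32)^{1-\eta}|q|^{1-\eta}e^{-(3/2)^\eta|q|^\eta}$. Dividing by $t^{1+\gamma}\sim(c_\eta/\eta)^{1+\gamma}|q|^{(1-\eta)(1+\gamma)}e^{-(1+\gamma)|q|^\eta}$ yields
\[
\frac{G(\tfrac32 G^{-1}(t))}{t^{1+\gamma}}\sim C\,|q|^{-\gamma(1-\eta)}\,e^{-[(3/2)^\eta-(1+\gamma)]|q|^\eta}
\]
for a positive constant $C$. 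Here lies the whole point: the numerator carries the exponent $(3/2)^\eta$ while the denominator carries $1+\gamma$, so the bracket $(3/2)^\eta-(1+\gamma)$ is positive exactly when $\gamma<(3/2)^\eta-1$, in which case the exponential, and hence the ratio, tends to $0$; the polynomial factor is bounded because $\gamma(1-\eta)\ge0$. Having shown the continuous positive ratio $G(\tfrac32 G^{-1}(\cdot))/(\cdot)^{1+\gamma}$ has limits $0$ and $1$ at the two ends of $(0,1)$, I conclude it is bounded there, and choosing $c_*$ strictly larger than its supremum (and than the constant from the second inequality) gives the strict inequality for all $t\in(0,1)$ simultaneously. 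The substantive work is thus the tail asymptotics and the bookkeeping of exponents that pins the admissible range of $\gamma$ to $(3/2)^\eta-1$; the uniformity over the open interval is then a soft continuity argument rather than a further estimate.
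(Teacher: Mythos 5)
Your proof is correct, but it takes a genuinely different route from the paper's. The paper argues entirely by explicit, non-asymptotic inequalities: for $u\le 0$ it substitutes $x\mapsto \tfrac32 x$ to write $G(3u/2)=\tfrac23\int_{-\infty}^u c_\eta e^{-(3/2)^\eta|x|^\eta}dx$, peels off the factor $e^{-(1+\gamma)|u|^\eta}$ using $|x|\ge|u|$ and integrability of $e^{(1+\gamma-(3/2)^\eta)|x|^\eta}$ (this is where $\gamma<(3/2)^\eta-1$ enters), and pairs this with the pointwise lower bound $G(u)\ge\tfrac12 e^{-|u|^\eta}$, which it gets from the subadditivity $|u|^\eta-|x|^\eta\ge-|u-x|^\eta$ valid for $\eta\le 1$; the case $u\ge0$ is dispatched by $G(3u/2)\le 1\le(2G(u))^{1+\gamma}$. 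You instead derive the sharp tail asymptotics $1-G(x)\sim\tfrac{c_\eta}{\eta}x^{1-\eta}e^{-x^\eta}$ (via the incomplete gamma function or L'H\^opital), compute the limits of the two relevant ratios at the endpoints of their domains, and conclude boundedness by continuity and compactness. Both arguments are sound and both locate the threshold in the competition between the exponent $(3/2)^\eta$ in the numerator and $1+\gamma$ in the denominator. The paper's route yields explicit constants and a two-sided bound on $G$ that is reused elsewhere (the lower bound \eqref{eq.exam2} is cited again for the second condition), at the cost of the slightly ad hoc subadditivity trick tied to $\eta\le1$; your route is softer and gives no explicit $c_*$, but it isolates the exponent bookkeeping more transparently and, since the asymptotics hold for any $\eta>0$, it adapts with less friction to the extension to $\eta>1$ mentioned after the lemma. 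One small point of care: make sure you state that the endpoint limits being finite lets the ratio extend continuously to the compact closure, so the supremum is finite and any $c_*$ strictly above it yields the strict inequality required in Assumption \ref{def.tail.decay}; you do say this, and it closes the argument.
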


The considered class contains in particular the Laplace distribution. The result can also extended to $\eta>1$ for any $\gamma <1/2.$ In particular, it can be verified for the normal distribution and any $\gamma <1/2$ by following similar arguments and using Mill's ratio $\frac 12 < x(1-\Phi(x))/\phi(x) < 1$ for any $x\geq 1,$ see (9) in \cite{gordon1941}.

In \cite{MS2008}, log-concavity has been assumed. This means that the tails of the distribution cannot be heavier than Laplace. As shown in the previous lemma, in our setup, we can allow for tail decay $e^{-|x|^\eta}$ with small $\eta>0.$ This comes, however, at the price of larger remainder terms in the subsequent results.

The bounds on $C^{-}(\theta_0)$ and $C^+(\theta_0)$ derived below are of the form $1/2-\text{const.} \times \alpha$ plus terms of smaller order in $\alpha.$ Although precise constants can be obtained from the proofs, we find it more appealing to collect remainder terms using big O notation. Throughout the following, $O(\alpha^{1+\gamma})$ means that there is a term of the form $C\alpha^{1+\gamma}$ with $C$ a constant that can depend on the prior parameters $\lambda, w$ and the error density $g,$ but not on $\theta_0.$ In some statements, we assume that $\lambda$ is sufficiently large in comparison with $\alpha.$ For such statements $\lambda=\lambda(\alpha)$ is viewed as a function in $\alpha$ and $C$ will be independent of $\lambda.$

The level $\alpha$ is typically fixed in practice to a small value and to consider small $\alpha$ asymptotics might appear to be unsatisfactory. We believe that this asymptotics provides a good compromise between what is mathematically tractable and the behavior observed in simulations. The simulations in Figure \ref{figcov} also show that the remainder terms are indeed present and not an artifact of the proof as for instance the minimum is not attained exactly at $1-3\alpha/2.$

\begin{prop}
	\label{prop.upper_coverage}
	Suppose Assumption \ref{def.tail.decay} holds for some $c_*$ and $\gamma \in (0,1],$ then, we have
	\begin{compactitem}
		\item[(i)]  for $\theta_0>t_\alpha \vee \lambda,$
		\begin{align*}
			\frac{1}{2}-\alpha +O(\alpha^{1+\gamma})
			\leq C^{-}(\theta_0) \leq \frac{1-\alpha}{2},
		\end{align*}
		\item[(ii)] for $\theta_0>t_\alpha \vee \lambda$ and if $\sup \operatorname{U}_{\alpha}^{-1}(\theta_0)\geq \lambda,$
		\begin{align*}
			C^{-}(\theta_0)=\frac{1-\alpha}{2} +O(\alpha^{1+\gamma}),
		\end{align*}
		\item[(iii)] for $G(-\lambda)\leq \alpha$ and $t_\alpha \leq \lambda,$
		\begin{align*}
			\lim_{\theta_0 \downarrow \lambda} C^{-}(\theta_0) \leq \frac{1}{2}- \alpha + O(\alpha^{1+\gamma}).
		\end{align*}
	\end{compactitem}
\end{prop}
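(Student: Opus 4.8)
The plan is to reduce all three parts to a single ``master formula'' for the lower coverage and then to control one correction term. On $\mathcal{X}_{\textnormal{I}}$ the identity $\operatorname{L}_{\alpha}(x)=-\operatorname{U}_{\alpha}(-x)=x-R_1(x)$ holds, because $R_1$ is even and $\mathcal{X}_{\textnormal{I}}$ is symmetric. Since $\theta_0>\lambda$ forces $\operatorname{L}_{\alpha}^{-1}(\theta_0)\subset\mathcal{X}_{\textnormal{I}}$, every $s\in\operatorname{L}_{\alpha}^{-1}(\theta_0)$ satisfies $s-\theta_0=R_1(s)>0$, and as $X-\theta_0$ has density $g$ under $P_{\theta_0}$, I get $\mathbb{P}_{\theta_0}(\theta_0\le X\le s)=G(R_1(s))-\tfrac12$. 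Inserting the definition \eqref{eq.Rsdef} of $R_1$ yields $\mathbb{P}_{\theta_0}(\theta_0\le X\le s)=\tfrac12-\tfrac\alpha2-\rho(s)$, where $\rho(s):=\tfrac{1-w}{2w}\alpha g(s)+\tfrac{1-\alpha}2\Delta_\lambda(s)\ge0$. By Lemma \ref{lem.cov_reform} this sandwiches $C^{-}(\theta_0)$ between $\tfrac12-\tfrac\alpha2-\rho(s_-)$ and $\tfrac12-\tfrac\alpha2-\rho(s_+)$, where $s_-:=\inf\operatorname{L}_{\alpha}^{-1}(\theta_0)$ and $s_+:=\sup\operatorname{L}_{\alpha}^{-1}(\theta_0)$, so everything reduces to estimating $\rho$ at the two endpoints.

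For (i) the upper bound is immediate: $\rho\ge0$ gives $C^{-}(\theta_0)\le\tfrac12-\tfrac\alpha2=\tfrac{1-\alpha}2$. For the lower bound I would bootstrap $\rho(s_-)$. Because $s_-\in\mathcal{X}_{\textnormal{I}}$ means $s_->\lambda+R_1(s_-)$, we have $\Delta_\lambda(s_-)\le G(\lambda-s_-)<G(-R_1(s_-))=\tfrac\alpha2+\rho(s_-)$, a self-consistent inequality yielding $\rho(s_-)<\tfrac1{1+\alpha}\big(\tfrac{1-w}{w}\alpha g(s_-)+\tfrac{(1-\alpha)\alpha}2\big)$. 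A crude first pass with $g(s_-)\le g(\lambda)$ gives $\rho(s_-)=O(\alpha)$, hence $R_1(s_-)\ge G^{-1}(1-C\alpha)$ is large; then $g(s_-)\le g(R_1(s_-))\le c_*(C\alpha)^\gamma$ by the second inequality of Assumption \ref{def.tail.decay}, so the $g$-term is $O(\alpha^{1+\gamma})$ and $\rho(s_-)\le\tfrac\alpha2+O(\alpha^{1+\gamma})$. This gives $C^{-}(\theta_0)\ge\tfrac12-\alpha+O(\alpha^{1+\gamma})$.

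Parts (ii) and (iii) both upgrade the $\Delta_\lambda$-part of $\rho$ from $O(\alpha)$ to $O(\alpha^{1+\gamma})$ by invoking the first inequality of Assumption \ref{def.tail.decay} with its $\tfrac32$-factor. In (ii) the point is that $\Delta_\lambda(s_-)\le G(\lambda-s_-)=G(\lambda-\theta_0-R_1(s_-))$, which lies below $G(\tfrac32G^{-1}(t))$, $t:=1-G(R_1(s_-))=O(\alpha)$, as soon as $\theta_0-\lambda\ge\tfrac12R_1(s_-)$; the assumption then bounds it by $c_*t^{1+\gamma}=O(\alpha^{1+\gamma})$, giving $\rho(s_-)=O(\alpha^{1+\gamma})$ and, with (i), $C^{-}(\theta_0)=\tfrac{1-\alpha}2+O(\alpha^{1+\gamma})$. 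In (iii) I let $\theta_0\downarrow\lambda$: by monotonicity and continuity of $R_1$ (Lemma \ref{lem.mon}) the value $s_+$ tends to the largest solution $s$ of $x=\lambda+R_1(x)$, so now $\lambda-s=-R_1(s)$ holds with equality and $\Delta_\lambda(s)=\tfrac\alpha2+\rho(s)-G(-2\lambda-R_1(s))$. Substituting into $\rho(s)$ produces the exact relation $\tfrac{1+\alpha}2\rho(s)=\tfrac{1-w}{2w}\alpha g(s)+\tfrac{(1-\alpha)\alpha}4-\tfrac{1-\alpha}2G(-2\lambda-R_1(s))$. The hypothesis $G(-\lambda)\le\alpha$ means $\lambda\ge G^{-1}(1-\alpha)\ge\tfrac14R_1(s)$ for small $\alpha$, whence $-2\lambda-R_1(s)\le-\tfrac32R_1(s)$ and $G(-2\lambda-R_1(s))=O(\alpha^{1+\gamma})$ by the assumption. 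Dropping the nonnegative $g$-term gives the lower bound $\rho(s)\ge\tfrac\alpha2+O(\alpha^{1+\gamma})$, hence $\lim_{\theta_0\downarrow\lambda}C^{-}(\theta_0)\le\tfrac12-\tfrac\alpha2-\rho(s)\le\tfrac12-\alpha+O(\alpha^{1+\gamma})$.

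I expect the main obstacle to be the step, needed only for (ii), of turning the hypothesis $\sup\operatorname{U}_{\alpha}^{-1}(\theta_0)\ge\lambda$ into the quantitative bound $\theta_0-\lambda\ge\tfrac12R_1(s_-)$. Writing $u:=\sup\operatorname{U}_{\alpha}^{-1}(\theta_0)\ge\lambda$, Theorem \ref{thm.altern_Ua} gives $\theta_0=\operatorname{U}_{\alpha}(u)=u+\big((R_2\wedge R_3)\vee R_1\big)(u)\ge\lambda+R_1(u)$, but $R_1(u)$ can be merely of constant order when $u$ is close to $\lambda$, so the naive bound $\theta_0-\lambda\ge R_1(u)$ is too weak. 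The real content is that values of $\theta_0$ only slightly larger than $\lambda$ are attained by $\operatorname{U}_{\alpha}$ only at arguments below $\lambda$; equivalently the image $\operatorname{U}_{\alpha}([\lambda,\infty))$ must be shown to omit an interval $(\lambda,\lambda+cG^{-1}(1-\alpha/2))$. Establishing this requires a careful regime-by-regime analysis of $\operatorname{U}_{\alpha}$ near $\lambda$ via Theorem \ref{thm.LU}, keeping track of its discontinuities and of the fact that the ``active radius'' at $x=\lambda$ is of order $G^{-1}(1-\alpha/2)$; the comparison $R_1(u)\ge\tfrac12R_1(s_-)$ then follows once both radii are pinned into $[G^{-1}(1-C\alpha),G^{-1}(1-\alpha/2)]$, using $G^{-1}(1-c\alpha)/G^{-1}(1-\alpha/2)\to1$. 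For (iii) the analogous translation is painless, since $G(-\lambda)\le\alpha$ is itself a lower bound on $\lambda$; the only delicate point there is justifying the limit $s_+\to s$ and the resulting exact identity $\lambda-s=-R_1(s)$.
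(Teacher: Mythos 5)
Your reduction of all three parts to bounding $\rho(s)=\tfrac{1-w}{2w}\alpha g(s)+\tfrac{1-\alpha}{2}\Delta_\lambda(s)$ at the endpoints of $\operatorname{L}_{\alpha}^{-1}(\theta_0)$ is exactly the paper's strategy: your self-consistent inequality $\Delta_\lambda(s_-)<G(-R_1(s_-))=\tfrac{\alpha}{2}+\rho(s_-)$ is Lemma \ref{lem.tech.general.lambda-xl}, your bootstrap $g(s_-)\le g(R_1(s_-))=O(\alpha^\gamma)$ is Lemma \ref{lem.tech.g(R1)}, and part (iii) is carried out in the paper through the same fixed-point identity, phrased as a limit inequality for $\lim_{\theta_0\downarrow\lambda}\Delta_\lambda(\overline x)$ so that the convergence $s_+\to s$ you flag as delicate never has to be justified. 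Parts (i) and (iii) of your argument are therefore essentially complete.

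Part (ii) is not. You correctly identify that everything hinges on converting $\sup\operatorname{U}_{\alpha}^{-1}(\theta_0)\ge\lambda$ into a quantitative separation $\theta_0-\lambda\gtrsim G^{-1}(1-C\alpha)$, but you leave this step as ``the main obstacle,'' and the plan you sketch (showing that $\operatorname{U}_{\alpha}([\lambda,\infty))$ omits an interval above $\lambda$ by tracking discontinuities, then comparing $R_1(u)$ with $R_1(s_-)$ via a ratio $G^{-1}(1-c\alpha)/G^{-1}(1-\alpha/2)\to1$) is both unexecuted and harder than necessary; that ratio statement is not a consequence of Assumption \ref{def.tail.decay} and would need its own proof. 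The actual resolution is a pointwise bound: for every $x^*\ge\lambda$ with $|x^*|>t_\alpha$, the active radius $\operatorname{U}_{\alpha}(x^*)-x^*=\big((R_2\wedge R_3)\vee R_1\big)(x^*)$ is at least $A:=G^{-1}\big(1-\alpha-\tfrac{1-w}{w}\alpha g(\lambda)\big)$. For $R_2$ and $R_3$ this is immediate from \eqref{eq.Rsdef} after dropping the nonnegative $\Delta_\lambda$ and $G(-\lambda-x)$ contributions and using $g(x^*)\le g(\lambda)$; and if the active radius is $R_1$ then $x^*\in\mathcal{X}_{\textnormal{I}}$, where Lemma \ref{lem.tech.general.lambda-xl} forces $\Delta_\lambda(x^*)=O(\alpha)$ and hence $R_1(x^*)\ge A$ by Lemma \ref{lem.tech.R1}. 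Your worry that ``$R_1(u)$ can be merely of constant order near $\lambda$'' rests on the unconstrained bound on $R_1$; in the only regime where $R_1$ is the active radius this cannot happen. With $\theta_0\ge\lambda+A$ and $R_1(s_-)\ge A$ one gets $s_-=\theta_0+R_1(s_-)\ge\lambda+2A$, hence $\lambda-s_-\le-\tfrac32 A=\tfrac32 G^{-1}\big(1-G(A)\big)$ with $1-G(A)=O(\alpha)$, and the first inequality of Assumption \ref{def.tail.decay} gives $\Delta_\lambda(s_-)\le G(\lambda-s_-)=O(\alpha^{1+\gamma})$ directly; no comparison between $A$ and $R_1(s_-)$ beyond $R_1(s_-)\ge A$ is required.
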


Thus, already if $\theta_0$ is slightly larger than $\lambda,$ the coverage $C^{-}(\theta_0)$ is $(1-\alpha)/2$ up to terms of the order $O(\alpha^{1+\gamma})$ by (ii). For $\theta_0$ approaching $\lambda$, the coverage $C^{-}(\theta_0)$ is, however, close to $1/2-\alpha$ as can be seen from combining (i) and (iii). Without the condition  $G(-\lambda)\leq \alpha$ in (iii), the assertion is not true. To see this, consider the extreme case with $\lambda=0$ and $w=1.$ In this case, the prior is the (improper) uniform prior on $\mathbb{R}$ and the credible sets are of the form $[X-G^{-1}(1-\alpha/2),X+G^{-1}(1-\alpha/2)].$ Consequently, $C^{-}(\theta_0)=(1-\alpha)/2$ for all $\theta_0$ and (iii) does not hold. Only for large $\lambda$ we will have the $1/2-\alpha$ coverage near $\lambda$ and to establish the result $G(-\lambda)\leq \alpha$ occurs very naturally.

In a next step, we collect some results on the behavior of $C^+(\theta_0).$

\begin{prop}\label{prop.Cpart2}
	Suppose Assumption \ref{def.tail.decay} holds for some $c_*$ and $\gamma \in (0,1].$
	\begin{compactitem}
		\item[(i)] If $\inf \operatorname{U}_{\alpha}^{-1}(\theta_0) \geq \lambda$ and $G(-2\lambda) \leq \tfrac{\alpha}2 \wedge (c_*\alpha^{1+\gamma}),$ then,
		\begin{align*}
			C^{+}(\theta_0) \leq \frac{1-\alpha}{2}+O\big(\alpha^{1+\gamma}\big).
		\end{align*}
		\item[(ii)] If $\sup \operatorname{U}_{\alpha}^{-1}(\theta_0) \geq \lambda+\tfrac 32 G^{-1}(1-\alpha/2)>t_\alpha,$ then,
		\begin{align*}
			C^{+}(\theta_0) 
			&\geq 
			\frac{1-\alpha}{2}+O\big(\alpha^{1+\gamma}\big).
		\end{align*}
		\item[(iii)] If $\lambda\geq t_\alpha$ and $G(-\lambda)\leq \alpha,$ then,
		\begin{align*}
			\inf_{\theta_0: \sup \operatorname{U}_{\alpha}^{-1}(\theta_0) \geq \lambda}C^{+}(\theta_0)&= 
			\frac{1}{2}- \alpha +O\big(\alpha^{1+\gamma}\big) + \alpha G\Big( \frac 12 G^{-1}(\alpha)\Big).
		\end{align*}
		\item[(iv)] If $t_\alpha=-\infty,$ then, there exists a $\theta_0^*>\lambda$ such that for any $\theta_0\in (\lambda, \theta_0^*],$
		\begin{align*}
			C^{+}(\theta_0) \geq \frac 12 - G(-\lambda).
		\end{align*}
		\item[(v)] If $t_\alpha> \theta_0>\lambda,$ we have $C^{+}(\theta_0)\leq G(-2\lambda).$
	\end{compactitem}
\end{prop}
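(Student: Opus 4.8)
The plan is to bound $C^{+}(\theta_0)$ directly from its definition in \eqref{eq.coverage_upper}, exploiting only the support restrictions imposed by the event. The key observation is that the hypothesis $t_\alpha>\theta_0>\lambda$ already pins the relevant values of $X$ into the far left tail, so that neither the regime analysis of Theorem \ref{thm.LU} nor any information about the precise shape of $\operatorname{U}_{\alpha}$ is needed — in contrast to parts (i)--(iv), this makes the argument essentially elementary.

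First I would unpack the event defining $C^{+}(\theta_0)=\mathbb{P}_{\theta_0}\big(\theta_0\in(X,\operatorname{U}_{\alpha}(X)]\cap\{|X|>t_\alpha\}\big)$. Membership of $\theta_0$ in $(X,\operatorname{U}_{\alpha}(X)]$ forces $X<\theta_0$, while the second factor forces $|X|>t_\alpha$. Under the assumption $t_\alpha>\theta_0$, the chain $X<\theta_0<t_\alpha$ excludes the possibility $X>t_\alpha$, so $|X|>t_\alpha$ can hold only through $X<-t_\alpha$. Hence the defining event is contained in $\{X<-t_\alpha\}$.

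Second, I would discard the remaining constraint $\theta_0\le\operatorname{U}_{\alpha}(X)$, which can only shrink the event, to obtain $C^{+}(\theta_0)\le\mathbb{P}_{\theta_0}(X<-t_\alpha)$. Writing $X=\theta_0+\varepsilon$ with $\varepsilon$ having c.d.f.\ $G$ gives the exact intermediate value $\mathbb{P}_{\theta_0}(X<-t_\alpha)=G(-t_\alpha-\theta_0)$. Finally, since both $\theta_0>\lambda$ and $t_\alpha>\theta_0>\lambda$ exceed $\lambda$, their sum satisfies $t_\alpha+\theta_0>2\lambda$; monotonicity of $G$ then yields $G(-t_\alpha-\theta_0)\le G(-2\lambda)$, which is the claimed bound.

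There is no genuinely hard step here: the only point requiring care is the logical deduction in the first paragraph that the two support constraints, combined with $\theta_0<t_\alpha$, confine $X$ to $\{X<-t_\alpha\}$. Once this localization is in place, the identification of $\mathbb{P}_{\theta_0}(X<-t_\alpha)$ with the tail $G(-t_\alpha-\theta_0)$ and the crude estimate $t_\alpha+\theta_0>2\lambda$ close the proof immediately. The noteworthy feature is precisely that the set-valued inversion of $\operatorname{U}_{\alpha}$, which drives the analysis elsewhere, plays no role in this regime.
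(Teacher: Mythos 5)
Your argument for part (v) is correct and coincides with the paper's own one-line proof: the event in \eqref{eq.coverage_upper} forces $X<\theta_0<t_\alpha$, hence $X<-t_\alpha$, and discarding the constraint $\theta_0\leq \operatorname{U}_{\alpha}(X)$ gives $C^{+}(\theta_0)\leq G(-t_\alpha-\theta_0)\leq G(-2\lambda)$. However, the statement to be proved is the full five-part Proposition, and your proposal addresses only part (v); you explicitly set parts (i)--(iv) aside as harder without proving them. That is a substantial gap, since those four parts carry essentially all of the content of the result, and part (v) is the one place where, as you correctly observe, the set-valued inversion of $\operatorname{U}_{\alpha}$ plays no role.

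Concretely, parts (i) and (ii) require locating $\inf \operatorname{U}_{\alpha}^{-1}(\theta_0)$ resp.\ $\sup \operatorname{U}_{\alpha}^{-1}(\theta_0)$ in the correct regime (for (i) one first shows that $G(-2\lambda)\leq \alpha/2$ forces $\sup\mathcal{X}_{\textnormal{III}}<\lambda$), and then combining the sandwich bounds of Lemma \ref{lem.cov_reform} with the closed forms of $R_1,R_2$ and the tail estimates from Lemma \ref{lem.tech.g(R1)} and Assumption \ref{def.tail.decay} to control the error at order $\alpha^{1+\gamma}$. Part (iii) is the most delicate: the lower bound needs a case distinction over the regime containing $\sup \operatorname{U}_{\alpha}^{-1}(\theta_0)$ together with Lemma \ref{lem.tech.R1}, while the matching upper bound requires exhibiting an explicit $\theta_0$, verifying that it satisfies $\sup \operatorname{U}_{\alpha}^{-1}(\theta_0)\geq\lambda$ via an intermediate value argument, and showing that $\operatorname{U}_{\alpha}(x)<\theta_0$ on the whole interval $[\lambda+\tfrac 12 G^{-1}(1-\alpha),\lambda+G^{-1}(1-\alpha)]$ so that the corresponding probability mass is excluded from the coverage event. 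Part (iv) uses continuity of $\operatorname{U}_{\alpha}$ on $[0,2\lambda]$ and compactness to produce $\theta_0^*$. None of these steps appears in your proposal, so it cannot stand as a proof of the Proposition.
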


For large values of $\theta_0,$ the coverage $C^{+}(\theta_0)$ is $(1-\alpha)/2$ up to smaller order terms in $\alpha.$ This is, however, not true for all values of $\theta_0.$ Part (iii) shows that there are some $\theta_0,$ for which $C^+(\theta_0)$ is approximately $\tfrac 12 -\alpha.$ The conditions $\lambda\geq t_\alpha$ and $G(-\lambda)\leq \alpha$ in (iii) are satisfied for instance for large $\lambda,$ see Lemma \ref{lem.basic_props_Ualpha} (v). The last two statements show that near $\lambda,$ $C^{+}(\theta_0)$ can be close to $0$ or $1/2$ for large $\lambda.$

The next result is a consequence of Lemma \ref{lem.sum_cov}, Proposition \ref{prop.upper_coverage} and Proposition \ref{prop.Cpart2}.

\begin{thm}\label{thm.main}
	Suppose Assumption \ref{def.tail.decay} holds for some $c_*$ and $\gamma \in (0,1].$
	\begin{itemize}
		\item[(i)] If $\alpha \leq (4c_*)^{-1/\gamma}\wedge (1/2),$ $\lambda \geq t_\alpha,$ $G(-2\lambda) \leq c_*\alpha^{1+\gamma},$ and $\sup \operatorname{U}_{\alpha}^{-1}(\theta_0) \geq \lambda+\tfrac 32 G^{-1}(1-\alpha/2),$ then,
		\begin{align*}
			C(\theta_0) = 1-\alpha+O\big(\alpha^{1+\gamma}\big).
		\end{align*}
		\item[(ii)] If $\theta_0>\lambda\geq t_\alpha$ and $G(-\lambda)\leq \alpha,$
		\begin{align*}
			\inf_{\theta_0: \sup \operatorname{U}_{\alpha}^{-1}(\theta_0) \geq \lambda}C(\theta_0)&= 
			1-\frac 32 \alpha +O\big(\alpha^{1+\gamma}\big) + \alpha G\Big( \frac 12 G^{-1}(\alpha)\Big).
		\end{align*}
	\end{itemize}
\end{thm}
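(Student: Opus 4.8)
The plan is to combine the additive decomposition of the coverage with the sharp two–sided control of the lower and upper coverage already established. By Lemma \ref{lem.sum_cov} we have $C(\theta_0)=C^-(\theta_0)+C^+(\theta_0)$ for $\theta_0>\lambda$, so in each part it suffices to estimate $C^-$ and $C^+$ separately and add. Throughout we stay in the regime $\theta_0>\lambda$ of Section \ref{sec.coverage}; since $\lambda\ge t_\alpha$, the requirement ``$\theta_0>t_\alpha\vee\lambda$'' appearing in Propositions \ref{prop.upper_coverage} and \ref{prop.Cpart2} reduces to $\theta_0>\lambda$. For part (i), the hypothesis $\sup\operatorname{U}_\alpha^{-1}(\theta_0)\ge \lambda+\tfrac32 G^{-1}(1-\alpha/2)$ gives in particular $\sup\operatorname{U}_\alpha^{-1}(\theta_0)>\lambda\ge t_\alpha$, since $G^{-1}(1-\alpha/2)>0$. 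Thus Proposition \ref{prop.upper_coverage}(ii) yields $C^-(\theta_0)=\tfrac{1-\alpha}{2}+O(\alpha^{1+\gamma})$ and Proposition \ref{prop.Cpart2}(ii) yields $C^+(\theta_0)\ge \tfrac{1-\alpha}{2}+O(\alpha^{1+\gamma})$. For the matching upper bound on $C^+$ I would apply Proposition \ref{prop.Cpart2}(i); its tail condition is met because $\alpha\le(4c_*)^{-1/\gamma}$ forces $c_*\alpha^\gamma\le 1/4$, hence $c_*\alpha^{1+\gamma}\le\alpha/4<\alpha/2$, so that $G(-2\lambda)\le c_*\alpha^{1+\gamma}=\tfrac{\alpha}{2}\wedge(c_*\alpha^{1+\gamma})$. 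Adding the three bounds gives $C(\theta_0)=1-\alpha+O(\alpha^{1+\gamma})$.

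The remaining hypothesis of Proposition \ref{prop.Cpart2}(i), namely $\inf\operatorname{U}_\alpha^{-1}(\theta_0)\ge\lambda$, is the crux of part (i) and is not automatic: by Lemma \ref{lem.Ua_decrease} the function $\operatorname{U}_\alpha$ can exceed $\lambda$ on $(t_\alpha,\lambda)$, so small preimages of $\theta_0$ are a priori possible. I would rule them out using that $\theta_0$ is large. First, $\sup\operatorname{U}_\alpha^{-1}(\theta_0)\ge\lambda+\tfrac32 G^{-1}(1-\alpha/2)\ge\lambda+G^{-1}(1-\alpha/2)$ places the maximal preimage in $\mathcal{X}_{\textnormal{I}}$ by Lemma \ref{lem.basic_props_Ualpha}(i), where $\operatorname{U}_\alpha(x)=x+R_1(x)$ with $R_1>0$ by Lemma \ref{lem.well_def}; evaluating at that preimage gives $\theta_0>\lambda+\tfrac32 G^{-1}(1-\alpha/2)>\lambda$. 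Second, writing $\operatorname{U}_\alpha$ through the representation of Theorem \ref{thm.altern_Ua}, the value $\operatorname{U}_\alpha(x)=\theta_0>\lambda$ can only come from the branch $H_1(x)=\theta_0$ (the branch $H_2\le-\lambda$ being excluded), so $\operatorname{U}_\alpha^{-1}(\theta_0)=\{x:H_1(x)=\theta_0\}$. The heart of the matter is then a uniform increment bound of the form $\operatorname{U}_\alpha(x)-x=(R_2(x)\wedge R_3(x))\vee R_1(x)\le\tfrac32 G^{-1}(1-\alpha/2)$ on $\mathcal{X}_{\textnormal{I}}$--$\mathcal{X}_{\textnormal{III}}$. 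Granting it, every $x\le\lambda$ satisfies $\operatorname{U}_\alpha(x)\le\lambda+\tfrac32 G^{-1}(1-\alpha/2)<\theta_0$, whence $\operatorname{U}_\alpha^{-1}(\theta_0)\subseteq(\lambda,\infty)$ and $\inf\operatorname{U}_\alpha^{-1}(\theta_0)\ge\lambda$. Establishing this increment bound is the main technical obstacle: the contribution of $R_1$ is immediate from $R_1\le G^{-1}(1-\alpha/2)$, but controlling the rapidly varying $R_2$ on $\mathcal{X}_{\textnormal{II}}$ (and, where it is non-empty, $R_3$ on $\mathcal{X}_{\textnormal{III}}$, which for the large $\lambda$ forced by $G(-2\lambda)\le c_*\alpha^{1+\gamma}$ is typically empty by Lemma \ref{lem.basic_props_Ualpha}(v)) is exactly the delicate regime flagged after Theorem \ref{thm.LU}, and it explains the recurring $\tfrac32$-factor mirrored in Assumption \ref{def.tail.decay}.

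For part (ii), I would exploit that $C^-$ is essentially constant over the relevant parameter range. By Proposition \ref{prop.upper_coverage}(ii), for every $\theta_0>\lambda$ with $\sup\operatorname{U}_\alpha^{-1}(\theta_0)\ge\lambda$ one has $C^-(\theta_0)=\tfrac{1-\alpha}{2}+r(\theta_0)$ with $|r(\theta_0)|\le C\alpha^{1+\gamma}$ uniformly. Using $C=C^-+C^+$ and taking the infimum over the set $\{\theta_0>\lambda:\sup\operatorname{U}_\alpha^{-1}(\theta_0)\ge\lambda\}$, the uniformity of $r$ lets the infimum of the sum split, since replacing $C^-$ by the constant $\tfrac{1-\alpha}{2}$ perturbs the infimum by at most $C\alpha^{1+\gamma}$:
\[
\inf_{\theta_0:\,\sup\operatorname{U}_\alpha^{-1}(\theta_0)\ge\lambda} C(\theta_0)=\tfrac{1-\alpha}{2}+\inf_{\theta_0:\,\sup\operatorname{U}_\alpha^{-1}(\theta_0)\ge\lambda} C^+(\theta_0)+O(\alpha^{1+\gamma}).
\]
Inserting the exact infimum of $C^+$ from Proposition \ref{prop.Cpart2}(iii), whose hypotheses $\lambda\ge t_\alpha$ and $G(-\lambda)\le\alpha$ are exactly those assumed in (ii), gives
\[
\inf_{\theta_0:\,\sup\operatorname{U}_\alpha^{-1}(\theta_0)\ge\lambda} C(\theta_0)=\tfrac{1-\alpha}{2}+\tfrac12-\alpha+\alpha G\big(\tfrac12 G^{-1}(\alpha)\big)+O(\alpha^{1+\gamma}),
\]
which simplifies to $1-\tfrac32\alpha+\alpha G(\tfrac12 G^{-1}(\alpha))+O(\alpha^{1+\gamma})$, as claimed. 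The only delicate point in part (ii) is the legitimacy of the infimum split, which is immediate from the uniform remainder, so that (ii) follows once the machinery of (i) and Proposition \ref{prop.Cpart2}(iii) are in place.
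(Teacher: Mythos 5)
Your overall architecture is the same as the paper's: decompose $C=C^-+C^+$ via Lemma \ref{lem.sum_cov}, get $C^-$ from Proposition \ref{prop.upper_coverage}(ii), sandwich $C^+$ between Proposition \ref{prop.Cpart2}(i) and (ii) for part (i), and combine Proposition \ref{prop.upper_coverage}(ii) with Proposition \ref{prop.Cpart2}(iii) for part (ii). Your verification of the tail condition $G(-2\lambda)\le \tfrac{\alpha}{2}\wedge(c_*\alpha^{1+\gamma})$ and your handling of the infimum split in part (ii) are both fine, and you correctly identify that the entire content of part (i) is the implication $\sup\operatorname{U}_\alpha^{-1}(\theta_0)\ge\lambda+\tfrac32G^{-1}(1-\alpha/2)\Rightarrow\inf\operatorname{U}_\alpha^{-1}(\theta_0)\ge\lambda$, to be obtained from a uniform bound $\sup_{x\le\lambda}\operatorname{U}_\alpha(x)\le\lambda+\tfrac32G^{-1}(1-\alpha/2)$.

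However, you then declare that increment bound ``the main technical obstacle'' and stop, so the one nontrivial step of part (i) is left unproven; as written this is a genuine gap. The paper closes it in a few lines, and the ingredients are all available to you. First, Theorem \ref{thm.altern_Ua} gives $\operatorname{U}_\alpha(x)\le x+R_1(x)\vee R_2(x)$ whenever $\operatorname{U}_\alpha(x)>0$, because $(R_2\wedge R_3)\vee R_1\le R_2\vee R_1$ --- so $R_3$ and regime $\mathcal{X}_{\textnormal{III}}$ never need to be controlled separately, and your appeal to $\mathcal{X}_{\textnormal{III}}$ being ``typically empty'' is unnecessary (and would not be a proof anyway). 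Second, for the $R_2$ branch one uses the crude bound $R_2(x)<G^{-1}\bigl(1-\alpha G(x-\lambda)+G(-\lambda-x)\bigr)$ from \eqref{eq.Rsdef}, which for $x\le\lambda$ gives $x+R_2(x)<\lambda+G^{-1}\bigl(1-\tfrac{\alpha}{2}+G(-2\lambda)\bigr)$; the first half of Assumption \ref{def.tail.decay} applied at $t=\alpha/2$, together with $c_*\alpha^{1+\gamma}\le\alpha/4$ and $G(-2\lambda)\le\alpha/4$, yields $G\bigl(\tfrac32G^{-1}(\tfrac{\alpha}{2})\bigr)\le\tfrac{\alpha}{2}-G(-2\lambda)$ and hence $G^{-1}\bigl(1-\tfrac{\alpha}{2}+G(-2\lambda)\bigr)\le\tfrac32G^{-1}(1-\tfrac{\alpha}{2})$. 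This is precisely where the $\tfrac32$ in the hypothesis and in Assumption \ref{def.tail.decay} comes from, as you guessed; but guessing its provenance is not the same as supplying the estimate.
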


This means that for large values of $\theta_0,$ the $(1-\alpha)$-HPD credible set is nearly a $(1-\alpha)$-confidence set. For values of $\theta_0$ below $\{\theta_0: \sup \operatorname{U}_{\alpha}^{-1}(\theta_0) \geq \lambda\},$ the situation is much more complex and strongly depends on the interplay between $t_\alpha, \lambda, w,$ see Figure \ref{figcov}.

It is instructive to compare the frequentist coverage to the earlier work by Marchand and Strawderman \cite{MS2008} on lower-bounded mean problems. By a shift in the parameter space, their prior is $\pi_{\lambda}^{\operatorname{MS}}(\theta) \propto \mathbf{1}(\theta > \lambda),$ while the $\theta$-min prior for $w=1$ is $\pi_{\lambda}(\theta) \propto \mathbf{1}(|\theta|>\lambda)$.

\begin{thm}
	\label{thm.coverage_inclusion}
	Let $g \in \mathcal{G},$ $w=1,$ $\alpha \in (0,1), \theta_0>\lambda>0.$ Denote the frequentist coverage of the $\operatorname{HPD}_{\alpha}$ stemming from prior $\pi^{\operatorname{MS}}_{\lambda}$ by $C^{\operatorname{MS}}(\theta_0)$, and that of prior $\pi_{\lambda}$ by $C(\theta_0)$ as before, then, for $\theta_0>\lambda,$
	\begin{align}
		C^{\operatorname{MS}}(\theta_0) &< C(\theta_0)+G(-\theta_0).
		\label{eq.cov_inc_1}
	\end{align}
\end{thm}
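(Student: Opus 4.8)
The plan is to reduce the two coverage probabilities to a comparison of super-level sets of the single function $\theta\mapsto g(x-\theta)$, and then to show that as long as $x\ge 0$ the $\theta$-min credible set is at least as large on the positive half-axis as the Marchand--Strawderman one, so that the only source of lost coverage comes from $\{X<0\}$, which has probability $G(-\theta_0)$. First I would record both posteriors. For $w=1$ the $\theta$-min posterior equals $g(x-\theta)\mathbf{1}(|\theta|>\lambda)/(M^++M^-)$ and the MS posterior equals $g(x-\theta)\mathbf{1}(\theta>\lambda)/M^+$, where $M^+:=G(x-\lambda)$ and $M^-:=G(-x-\lambda)$ are the unnormalised masses on $\{\theta>\lambda\}$ and $\{\theta<-\lambda\}$. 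Since $g\in\mathcal{G}$ is unimodal, both HPD sets are super-level sets of $g(x-\cdot)$, so there are thresholds $c=c(x)$ and $d=d(x)$ with $\operatorname{HPD}^{\operatorname{MS}}_{\alpha}(x)=\{\theta>\lambda:g(x-\theta)\ge c\}$ and $\operatorname{HPD}_{\alpha}(x)=\{|\theta|>\lambda:g(x-\theta)\ge d\}$. Because $\theta_0>\lambda$, the two coverage events reduce to $\{g(X-\theta_0)\ge c(X)\}$ and $\{g(X-\theta_0)\ge d(X)\}$, so the entire comparison is governed by the sign of $c(X)-d(X)$.

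The key step is the claim $d(x)\le c(x)$ for every $x\ge 0$. Writing $F^{\pm}(t)$ for the unnormalised mass of $g(x-\cdot)$ above level $t$ on $\{\theta>\lambda\}$ and on $\{\theta<-\lambda\}$, the defining equations are $F^+(c)=(1-\alpha)M^+$ and $F^+(d)+F^-(d)=(1-\alpha)(M^++M^-)$. As $F^++F^-$ is non-increasing, $d\le c$ is equivalent to $F^+(c)+F^-(c)\le(1-\alpha)(M^++M^-)$, that is to $F^-(c)\le(1-\alpha)M^-$. Solving the one-sided truncated problem for $c$ gives, for the HPD radius $a$, either $G(a)=1-\alpha M^+$ (truncated regime) or $a\le x-\lambda$ (symmetric regime), and in both cases a short computation yields $F^-(c)=\max\{M^- -\alpha M^+,\,0\}$. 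Hence $F^-(c)\le(1-\alpha)M^-$ reduces to $M^+\ge M^-$, i.e.\ to $G(x-\lambda)\ge G(-x-\lambda)$, i.e.\ to $x\ge 0$. Consequently, on $\{X\ge 0\}$ we have $\theta_0\in\operatorname{HPD}^{\operatorname{MS}}_{\alpha}(X)\Rightarrow\theta_0\in\operatorname{HPD}_{\alpha}(X)$. Splitting on the sign of $X$,
\[
C^{\operatorname{MS}}(\theta_0)=\mathbb{P}_{\theta_0}\big(\theta_0\in\operatorname{HPD}^{\operatorname{MS}}_{\alpha}(X),\,X\ge 0\big)+\mathbb{P}_{\theta_0}\big(\theta_0\in\operatorname{HPD}^{\operatorname{MS}}_{\alpha}(X),\,X< 0\big),
\]
the first term is bounded by $\mathbb{P}_{\theta_0}(\theta_0\in\operatorname{HPD}_{\alpha}(X),X\ge 0)\le C(\theta_0)$ and the second by $\mathbb{P}_{\theta_0}(X<0)=G(-\theta_0)$, giving the non-strict inequality $C^{\operatorname{MS}}(\theta_0)\le C(\theta_0)+G(-\theta_0)$.

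To upgrade this to a strict inequality I would subtract the analogous decomposition of $C(\theta_0)$ and observe that equality can occur only if, for almost every $X<0$, simultaneously $\theta_0\in\operatorname{HPD}^{\operatorname{MS}}_{\alpha}(X)$ and $\theta_0\notin\operatorname{HPD}_{\alpha}(X)$. At $x=0$ the two posteriors restricted to $\{\theta>\lambda\}$ coincide, so their HPD sets share the common upper endpoint $U(0):=G^{-1}(1-\alpha G(-\lambda))$ (indeed $0\in\mathcal{X}_{\mathrm{III}}$ here, where $\operatorname{U}_{\alpha}(x)=x+R_3(x)$, and one checks $\operatorname{U}_{\alpha}(0)=R_3(0)=U(0)$); moreover both upper endpoints are continuous near $0$. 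If $\theta_0<U(0)$, continuity forces $\theta_0\in\operatorname{HPD}_{\alpha}(X)$ for all $X$ in a left neighbourhood of $0$, contradicting the second condition; if $\theta_0>U(0)$, then $U^{\operatorname{MS}}(X)\to U(0)<\theta_0$ gives $\theta_0\notin\operatorname{HPD}^{\operatorname{MS}}_{\alpha}(X)$ for $X$ near $0^-$, contradicting the first. The single borderline value $\theta_0=U(0)$ I would treat by a first-order expansion at $x=0$: using $\Delta_\lambda'(0)=g(\lambda)-g(\lambda)=0$ one gets $\operatorname{U}_{\alpha}'(0)=1$, while the strict version of the previous step, $d(x)<c(x)$ for $x>0$, yields $\operatorname{U}_{\alpha}(x)>U^{\operatorname{MS}}(x)$ there, so according to the sign of $(U^{\operatorname{MS}})'(0)$ either $\{X>0,\ \theta_0\in\operatorname{HPD}_{\alpha}\setminus\operatorname{HPD}^{\operatorname{MS}}_{\alpha}\}$ or $\{X<0,\ \theta_0\notin\operatorname{HPD}^{\operatorname{MS}}_{\alpha}\}$ has positive probability.

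I expect the strictness argument, and in particular the borderline case $\theta_0=U(0)$, to be the main obstacle. The non-strict bound is an essentially soft consequence of the mass identity $F^-(c)=\max\{M^- -\alpha M^+,0\}$ together with $M^+\ge M^-$ on $\{x\ge 0\}$, whereas ruling out equality requires the local comparison of the two upper endpoints at $x=0$ and cannot rely on global monotonicity of $U^{\operatorname{MS}}$, which in fact fails for heavy-tailed $g\in\mathcal{G}$ (there $U^{\operatorname{MS}}(x)\to\infty$ as $x\to-\infty$, unlike in the log-concave setting of \cite{MS2008}).
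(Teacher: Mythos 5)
Your argument for the core inequality is correct but follows a genuinely different route from the paper. The paper establishes the containment $\operatorname{HPD}^{\operatorname{MS}}_{\alpha}(x)\subseteq\operatorname{HPD}_{\alpha}(x)$ for $x\ge 0$ by a regime-by-regime comparison: on $\mathcal{X}_{\textnormal{I}}\cup\mathcal{X}_{\textnormal{II}}$ it uses that the two posteriors are proportional on $\{\theta>\lambda\}$ with $\pi^{\operatorname{MS}}_{\lambda}(\theta|x)>\pi_\lambda(\theta|x)$, so a level set of $\pi_\lambda$-mass $1-\alpha$ contained in $[\lambda,\infty)$ automatically has $\pi^{\operatorname{MS}}_{\lambda}$-mass at least $1-\alpha$; on $\mathcal{X}_{\textnormal{III}}$ it compares $R_3$ against the explicit endpoints $R_1^{\operatorname{MS}},R_2^{\operatorname{MS}}$ imported from Lemma 3 of \cite{MS2008}. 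Your mass-balance identity $F^-(c)=\max\{M^--\alpha M^+,\,0\}$, combined with the equivalence $d\le c\iff F^-(c)\le(1-\alpha)M^-$, proves the same containment in one stroke, without splitting into regimes and without invoking the closed forms from \cite{MS2008}; the identity checks out in both the truncated and the symmetric MS regimes (in the latter one indeed has $M^-\le\alpha M^+$, so the maximum is $0$). The resulting split over $\{X\ge 0\}$ and $\{X<0\}$ then matches the paper's final display and yields $C^{\operatorname{MS}}(\theta_0)\le C(\theta_0)+G(-\theta_0)$.

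The weak point is your strictness argument, specifically the borderline case $\theta_0=U(0)=G^{-1}(1-\alpha G(-\lambda))$. Your dichotomy on the sign of $(U^{\operatorname{MS}})'(0)$ does not cover $(U^{\operatorname{MS}})'(0)=0$, and this case genuinely occurs: for the Laplace density in the truncated MS regime one has $U^{\operatorname{MS}}(x)=x+G^{-1}(1-\alpha G(x-\lambda))=\lambda+\ln(1/\alpha)$, so $U^{\operatorname{MS}}$ is locally \emph{constant} and equal to $U(0)$ on a whole neighbourhood of $0$. Then $\theta_0=U(0)\in\operatorname{HPD}^{\operatorname{MS}}_{\alpha}(X)$ for all $X$ near $0$ of either sign, and neither branch of your local argument produces the required positive-probability event. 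Strictness in this example instead comes from large $x$: there $R_1(x)>R_1^{\operatorname{MS}}(x)$ strictly (because $\Delta_\lambda(x)<G(\lambda-x)$), so the lower endpoints differ and one finds a positive-measure set of $x$ with $\theta_0\in\operatorname{HPD}_{\alpha}(x)\setminus\operatorname{HPD}^{\operatorname{MS}}_{\alpha}(x)$, violating your condition (a) rather than (b) or (c). To be fair, the paper's own proof consists entirely of non-strict inequalities and does not address strictness at all, so your attempt goes beyond it; but as written the borderline case is a gap.
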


In particular, we always have $G(-\theta_0)\leq G(-\lambda).$ The additional term $G(-\theta_0)$ thus disappears if $\lambda$ gets large. However, for $w=1$ and $\lambda$ small, we expect $C(\theta_0) \approx 1-\alpha$ for all $\theta_0,$ as discussed in Section \ref{sec.intro} for the limit $\lambda=0.$ At the same time, $C^{\operatorname{MS}}(\theta_0)$ can reach $1-\alpha/2$ up to smaller order terms in $\alpha$ as shown for the Laplace error density in Example 3 of \cite{MS2008} (for $\theta_0 = -\ln(\alpha)$ one has $C^{\operatorname{MS}}(\theta_0) = 1 - \alpha/2 - O(\alpha^2)$). This shows that \eqref{eq.cov_inc_1} cannot hold without an additional term on the right hand side.

\section{Relation to post-selection sets}
\label{sec.post_select}

Consider the $\theta$-min prior with all mass on the slab distribution, that is, $w=1.$ In this section we show that for the model $X=\theta+\varepsilon$ there is a duality between posterior credible sets under this prior and post-selection sets. In particular, it is possible to derive post-selection sets from credible sets and vice versa. 

In high-dimensional statistics, it is natural to first identify some relevant variables using a variable selection method such as the LASSO. Given a variable that is selected by the method, we then want to construct a $(1-\alpha)$-confidence interval. Such procedures are also known as post-selection methods \cite{lee2016}. The issue with post-selection is that the selection step is already data dependent making it highly non-trivial to construct a valid confidence set as a second step. 

Post-selection should be naturally applied to high-dimensional problems. However, it is instructive to study it for the model $X=\theta+\varepsilon,$ see also \cite{Weinstein2010}. Shrinkage based methods, such as the LASSO, select $X$ in this model if $|X|> \lambda$ for $\lambda$ a known threshold. A $(1-\alpha)$-post-selection set is of the form $\operatorname{PS}_{\alpha}(X),$ such that
\begin{align}
	P_{\theta_0}\big( \theta_0 \in \operatorname{PS}_{\alpha}(X) \, \big| \, |X| \geq \lambda\big) \geq 1-\alpha.
	\label{eq.PS}
\end{align}
Compared to the credible sets before the role of $\theta_0$ and $X$ are interchanged and as shown next, we can in this case obtain a $(1-\alpha)$-post-selection set by 'inverting' any $(1-\alpha)$-credible set. As before, we define the inverse of a set valued function $A$ as $A^{-1}(x):=\{y:x\in A(y)\}.$

\begin{lem}
	For $\lambda >0,$ let $\operatorname{CS}_{\alpha}(X)$ be a $(1-\alpha)$-credible set for the $\theta$-min prior with $w=1.$ Then, $\operatorname{PS}_\alpha(X)=\operatorname{CS}_{\alpha}^{-1}(X)$ is a $(1-\alpha)$-post selection set satisfying \eqref{eq.PS}.
\end{lem}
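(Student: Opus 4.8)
The plan is to unfold both the post-selection requirement and the credibility requirement and to observe that, for $w=1$, they literally coincide because $g$ is symmetric. First I would record that with $w=1$ the spike coefficient in \eqref{eq.posterior} vanishes, so the posterior is supported on $\{|\theta|>\lambda\}$ with density $\pi(\theta\mid X)=g(X-\theta)\mathbf 1(|\theta|>\lambda)/(1-\Delta_\lambda(X))$, where $\Delta_\lambda$ is as in \eqref{eq.delta_def}. Consequently the defining property $\Pi(\operatorname{CS}_\alpha(x)\mid X=x)\ge 1-\alpha$ of the credible set reads, for every observation value $x$,
\[
\int_{\operatorname{CS}_\alpha(x)\cap\{|\theta|>\lambda\}} g(x-\theta)\,d\theta \;\ge\; (1-\alpha)\big(1-\Delta_\lambda(x)\big).
\]
Note that the part of $\operatorname{CS}_\alpha(x)$ lying inside $[-\lambda,\lambda]$ (in particular the point $\{0\}$) carries zero posterior mass, so it never enters this inequality.

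Next I would rewrite the membership event using the definition $A^{-1}(x)=\{y:x\in A(y)\}$: we have $\theta_0\in \operatorname{PS}_\alpha(X)=\operatorname{CS}_\alpha^{-1}(X)$ if and only if $X\in\operatorname{CS}_\alpha(\theta_0)$. Unlike the set-valued inverses in Section \ref{sec.coverage}, this causes no difficulty here, since we only need the equivalence of the two membership events. Under $P_{\theta_0}$ the observation has density $g(\,\cdot-\theta_0)$, and $P_{\theta_0}(|X|\ge\lambda)=1-\Delta_\lambda(\theta_0)>0$ because $g$ is a positive density. Hence
\[
P_{\theta_0}\big(\theta_0\in\operatorname{PS}_\alpha(X)\,\big|\,|X|\ge\lambda\big) = \frac{\int_{\operatorname{CS}_\alpha(\theta_0)\cap\{|x|\ge\lambda\}} g(x-\theta_0)\,dx}{1-\Delta_\lambda(\theta_0)}.
\]

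The crux is then a single identity: since $g$ is symmetric about zero, $g(x-\theta_0)=g(\theta_0-x)$, so after renaming the dummy variable the numerator equals $\int_{\operatorname{CS}_\alpha(\theta_0)\cap\{|\theta|>\lambda\}} g(\theta_0-\theta)\,d\theta$ (the two boundary points $\{\pm\lambda\}$ form a Lebesgue-null set, so replacing $\ge$ by $>$ is harmless). This is exactly the left-hand side of the credibility inequality evaluated at the observation value $x=\theta_0$. Substituting that bound yields
\[
P_{\theta_0}\big(\theta_0\in\operatorname{PS}_\alpha(X)\,\big|\,|X|\ge\lambda\big) \;\ge\; \frac{(1-\alpha)\big(1-\Delta_\lambda(\theta_0)\big)}{1-\Delta_\lambda(\theta_0)} = 1-\alpha,
\]
which is precisely \eqref{eq.PS}, and the argument is uniform over all $\theta_0$.

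I do not expect a genuinely hard step; the proof is essentially the observation that, for the symmetric noise density and $w=1$, the sampling density $g(\cdot-\theta_0)$ and the posterior density $g(\theta_0-\cdot)$ agree pointwise, while the selection normaliser $P_{\theta_0}(|X|\ge\lambda)$ equals the posterior normaliser $1-\Delta_\lambda(\theta_0)$. The only care required is the bookkeeping around the support: one must check that conditioning on $\{|X|\ge\lambda\}$ discards exactly the part of $\operatorname{CS}_\alpha(\theta_0)$ inside $[-\lambda,\lambda]$, which is the same part the posterior ignores, so the two integrals that must be matched coincide. I would also briefly confirm measurability of the inverse so the displayed conditional probability is well defined.
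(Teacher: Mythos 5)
Your proposal is correct and follows essentially the same route as the paper's proof: both identify the sampling distribution of $X$ conditional on $|X|\ge\lambda$ with the posterior under the $w=1$ $\theta$-min prior (via the symmetry of $g$ and the common normaliser $1-\Delta_\lambda$), and then transfer the credibility bound to obtain \eqref{eq.PS}. You merely spell out the symmetry step, the matching of normalisers, and the support bookkeeping that the paper leaves implicit.
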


\begin{proof}
	The distribution of $X \big| |X| \geq \lambda$ is 
	\begin{align*}
		P_{\theta_0}\big( X \in A \, \big| \, |X| \geq \lambda\big) = \frac{\int_A g(x-\theta_0) dx}{\int_{|x|\geq \lambda} g(x-\theta_0) dx}, \quad \text{for all measurable} \ A \subseteq \mathbb{R}\setminus [-\lambda, \lambda].
	\end{align*}
	This should be compared to the posterior distribution for the $\theta$-min prior with $w=1,$ given by
	\begin{align*}
		\Pi\big(A \big|X\big) = \frac{\int_A g(\theta-X) d\theta}{\int_{|\theta|\geq \lambda} g(\theta-X) d\theta}, \quad \text{for all measurable} \ A \subseteq \mathbb{R}\setminus [-\lambda, \lambda].
	\end{align*}
	Using the formula and the fact that by definition of an $(1-\alpha)$-credible set, $\Pi(\operatorname{CS}_{\alpha}(X)|X)\geq 1-\alpha,$ we must have that $P_{\theta_0}( X \in \operatorname{CS}_{\alpha}(\theta_0) \, | \, |X| \geq \lambda)\geq 1-\alpha.$ Consequently, $\operatorname{PS}_\alpha(X)=\operatorname{CS}_{\alpha}^{-1}(X)$ is a $(1-\alpha)$-post selection set.
\end{proof}

As shown for the $(1-\alpha)$-HPD credible set, the frequentist coverage fluctuates around $1-\alpha.$ The previous result shows that a more natural comparison would be to relate credible sets to the frequentist coverage under the conditional distribution $X\big| |X|\geq \lambda.$ For a related argument in the case of lower bounded means, see part A of Section V in \cite{RoeWoodroofe2000}. While \cite{lee2016} deals with post selection in the linear regression model, the mathematical analysis has some striking similarities with our proofs to establish bounds on the frequentist coverage. If the connections between Bayes and post-selection can be extended to more complex models, this might open a new route to compute valid post-selection sets. Another interesting direction are the Bayesian post-selection sets discussed in \cite{Yekutieli2012}.

\section{Discussion}

Several natural extensions remain to be explored. One of the rather restrictive assumptions is the improper uniform prior distribution on the slab. Characterization of the frequentist coverage for more general classes of spike-and-slab priors or the horseshoe and its variants \cite{MR2650751} are not straightforward and likely require new proof strategies. Another direction is to consider more general models with natural extensions being the sequence model and the high-dimensional linear regression model. A major challenge is to unify Bayesian and frequentist uncertainty quantification by constructing sets that are simultaneous $(1-\alpha)$-credible sets and $(1-\alpha)$-confidence sets.

\section{Proofs}
\label{sec.proofs}

\subsection{Proofs for Section \ref{sec.crediblesets}}

Basic properties of $\Delta_{\lambda}(x)$ are summarized in the next lemma. 

\begin{lem}\label{lem.tech.delta}
	Let $g\in \mathcal{G},$ $\lambda > 0$ and recall that $\Delta_{\lambda}(x) = G(\lambda - x) - G(-\lambda - x)$. We have that
	\begin{compactitem}
		\item[(i)] $\Delta_{\lambda}(x) > 0$ for any $x \in \mathbb{R},$
		\item[(ii)] $\Delta_{\lambda}(x)=\Delta_{\lambda}(-x)$ for all $x,$
		\item[(iii)]$\Delta_{\lambda}(x)$ is strictly increasing on $(-\infty,0)$ and strictly decreasing on $(0,\infty),$
		\item[(iv)] $\Delta_{\lambda}(x)$ maximal for $x=0$, with $\Delta_{\lambda}(0) = 1 - 2G(-\lambda) < 1.$
	\end{compactitem}
\end{lem}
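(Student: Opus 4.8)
The plan is to reduce everything to the integral representation $\Delta_\lambda(x) = \int_{-\lambda - x}^{\lambda - x} g(t)\,dt$ together with the two structural facts about $g \in \mathcal{G}$: that $g$ is strictly positive everywhere, and that $g$ is symmetric and strictly decreasing in $|u|$ (equivalently, $G$ is strictly increasing and satisfies $G(q) = 1 - G(-q)$). Part (i) is then immediate: the interval $[-\lambda - x, \lambda - x]$ has length $2\lambda > 0$ and $g > 0$ on it, so the integral is strictly positive, giving $\Delta_\lambda(x) > 0$ for every $x$.

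For part (ii) I would work directly with the closed form and exploit the symmetry identity $G(q) = 1 - G(-q)$. Writing $\Delta_\lambda(-x) = G(\lambda + x) - G(-\lambda + x)$ and substituting $G(\lambda + x) = 1 - G(-\lambda - x)$ and $G(-\lambda + x) = 1 - G(\lambda - x)$, the two constants cancel and one is left with $G(\lambda - x) - G(-\lambda - x) = \Delta_\lambda(x)$, so $\Delta_\lambda$ is even.

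The heart of the lemma is part (iii), and this is the step I expect to require the most care. Since $g$ is continuous, $G$ is continuously differentiable with $G' = g$, so differentiating the closed form is justified and gives $\Delta_\lambda'(x) = -g(\lambda - x) + g(-\lambda - x)$; using symmetry $g(-\lambda - x) = g(\lambda + x)$ this simplifies to $\Delta_\lambda'(x) = g(\lambda + x) - g(\lambda - x)$. Because $g(u)$ depends only on $|u|$ and is strictly decreasing in $|u|$, the sign of this difference is governed by comparing $|\lambda + x|$ and $|\lambda - x|$; and since $(\lambda + x)^2 - (\lambda - x)^2 = 4\lambda x$, we have $|\lambda + x| > |\lambda - x|$ precisely when $x > 0$. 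Hence $\Delta_\lambda'(x) < 0$ for $x > 0$ and $\Delta_\lambda'(x) > 0$ for $x < 0$, which is exactly the claimed strict monotonicity. Alternatively, one could establish the statement on $(0,\infty)$ and transfer it to $(-\infty,0)$ using the evenness from part (ii).

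Finally, part (iv) follows by combining (ii) and (iii): the function increases up to $x = 0$ and decreases afterwards, so its maximum is attained at $x = 0$. Evaluating, $\Delta_\lambda(0) = G(\lambda) - G(-\lambda) = 1 - 2G(-\lambda)$ by the symmetry identity, and since $G$ takes values in $(0,1)$ we have $G(-\lambda) > 0$, whence $\Delta_\lambda(0) < 1$. The whole lemma is thus essentially a bookkeeping exercise in the symmetry and unimodality of $g$; no part of the argument requires differentiating $g$ itself, only $G$.
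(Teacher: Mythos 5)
Your proof is correct and follows essentially the same route as the paper's: (i) from strict positivity/monotonicity, (ii) from the symmetry identity $G(q)=1-G(-q)$, (iii) by computing $\Delta_\lambda'(x)=g(-\lambda-x)-g(\lambda-x)$ and comparing $|\lambda+x|$ with $|\lambda-x|$, and (iv) as a consequence of (iii). No gaps.
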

\begin{proof}
	{\it (i):} Since $\lambda>0$ and $G$ is strictly monotone, $G(\lambda - x) > G(-\lambda - x).$ {\it (ii):} The symmetry of $g$ implies that $G(q)=1-G(-q)$ for any real $q.$ Therefore,  $\Delta_{\lambda}(-x) := G(\lambda + x) - G(-\lambda + x) = 1-G(-\lambda-x) - 1 + G(\lambda-x) =\Delta(x).$ {\it (iii):} Let $x>0.$ Since $g$ is strictly increasing on $(-\infty,0)$ and strictly decreasing on $(0,\infty)$, the derivative $\Delta_{\lambda}'(x) = g(-\lambda - x) - g(\lambda - x)$ is negative iff $ | -\lambda - x| = |\lambda + x| > |\lambda - x|$ which in turn is equivalent to $x>0.$ For $x<0,$ the result follows from the first part and {\it (ii).} {\it (iv):} Follows from $(iii).$
\end{proof}

\begin{proof}[Proof of Lemma \ref{lem.Ta}]
	Set $h(x):=\Pi(0|X=x).$ We first show that if there exists a non-negative solution in $x$ to $h(x)=1-\alpha,$ this is unique. We have $h(x) = 1/[1+w(1-\Delta_\lambda(x))/((1-w)g(x)) ].$ Notice that Lemma \ref{lem.tech.delta} (ii) and $g\in \mathcal{G},$ $g(x)=g(-x)$ imply $h(x)=h(-x).$ Moreover, $h$ is strictly decreasing on $[0,\infty),$ since by Lemma \ref{lem.tech.delta} (iii), $1/g(x)$ and $1-\Delta_\lambda(x)$ are both strictly increasing on this interval. This shows that any non-negative solution to $h(x)=1-\alpha$ must be unique.

	We now show that $h(t_\alpha)=1-\alpha$ if and only if $t_\alpha$ is a solution to \eqref{eq.pt_mass_zero}. By symmetry of $g$ and \eqref{eq.delta_def}, it follows that $1-\Delta_{\lambda}(x) = 1- (G(\lambda - x) - G(-\lambda - x))= G(x-\lambda)+G(-\lambda-x).$ If $w<1$,
	\begin{align}
		h(x) = \Big(1 + w \frac{G(x-\lambda)+G(-\lambda-x)}{(1-w)g(x)} \Big)^{-1}.
		\label{eq.post_point_mass}
	\end{align}
	Rewriting this expression shows that $h(x)= 1-\alpha$ if and only if \eqref{eq.pt_mass_zero} holds. As shown before $h$ is strictly decreasing and symmetric and so we must have $h(x)\geq 1-\alpha$ if and only if $|x| \leq t_{\alpha}$.
\end{proof}

\begin{proof}[Proof of Lemma \ref{lem.well_def}]
	Define $B_i(x)$ through the equation $R_i(x) = G^{-1}(B_i(x)),$ $i=1,2,3.$ To show the first part of the statement, we need to verify that $1/2<B_1(x),B_3(x)<1$ for all $x$ with $|x|>t_\alpha$. Observe that due to $\Delta_\lambda(x) \geq 0,$ we have that $B_1(x) \leq B_3(x)$ for all $x.$ By definition as a probability also $\Delta_\lambda(x) < 1.$ This shows that $B_1(x) \leq B_3(x)<1.$ From the proof of Lemma \ref{lem.Ta}, we have that $|x|>t_\alpha$ if and only if $h(x)<1-\alpha.$ Rewriting this using \eqref{eq.post_point_mass} shows that then also
	\begin{align}
		- \frac{1-w}{2w}\alpha g(x) >  -\frac{1-\alpha}2 \big(1-\Delta_\lambda(x)\big).
		\label{eq.well_def_eq1}
	\end{align}
	With \eqref{eq.Rsdef}, this yields $B_1(x)> 1/2$ for all $x$ with $|x|>t_\alpha.$ Therefore also $0<R_1(x)\leq R_3(x) <\infty$ on $\{x: |x| >  t_\alpha\}$ proving the first part of the claim.
\end{proof} 

Denote by $\operatorname{sign}(x) = \mathbf{1}(x>0)-\mathbf{1}(x<0)$ the sign function. We also set $\operatorname{sign}(+\infty)=1$ and $\operatorname{sign}(-\infty)=-1.$ Given two functions $f,h$  defined on $\mathbb{R},$ we say that $f$ and $h$ are sign equivalent if $\operatorname{sign}(f(x))=\operatorname{sign}(h(x))$ for all $x\in \mathbb{R}.$

\begin{lem}[Sign equivalence] \label{lem.sign.equiv}
	\begin{compactitem}
		\item[(i)] The functions $R_2(x)-R_1(x)$ and $\lambda+R_1(x)-x$ are sign equivalent.
		\item[(ii)] The functions $R_2(x) - R_3(x)$ and $R_3(x)- x-\lambda$ are sign equivalent.
	\end{compactitem}
	
\end{lem}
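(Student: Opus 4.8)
The plan is to strip away the outer $G^{-1}$ and reduce each claim to the sign of a difference of the \emph{arguments} of the $G^{-1}$'s, and then to verify two exact algebraic identities. Write $R_i(x)=G^{-1}(B_i(x))$ for $i\in\{1,2,3\}$, where $B_i$ is the bracketed argument in \eqref{eq.Rsdef}. By the argument in the proof of Lemma \ref{lem.well_def}, $R_1$ and $R_3$ are finite on $\mathcal{T}_{\alpha}$, so on this set the comparisons below are well-defined even though $R_2$ may take the value $\pm\infty$. Since $G^{-1}$ is increasing, $\operatorname{sign}\big(R_2(x)-R_1(x)\big)=\operatorname{sign}\big(B_2(x)-B_1(x)\big)$; and since $G$ is increasing, $\operatorname{sign}\big(\lambda+R_1(x)-x\big)=\operatorname{sign}\big(R_1(x)-(x-\lambda)\big)=\operatorname{sign}\big(B_1(x)-G(x-\lambda)\big)$. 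The analogous reformulations for part (ii) are $\operatorname{sign}(R_2-R_3)=\operatorname{sign}(B_2-B_3)$ and $\operatorname{sign}(R_3-x-\lambda)=\operatorname{sign}(B_3-G(x+\lambda))$.

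It then suffices to establish the two identities
\begin{align*}
  B_2(x)-B_1(x)=B_1(x)-G(x-\lambda), \qquad B_2(x)-B_3(x)=B_3(x)-G(x+\lambda).
\end{align*}
Each is a short computation from \eqref{eq.Rsdef}: upon expanding the left-hand difference, the term $-\tfrac{1-w}{2w}\alpha g(x)$ is common to both sides, and the remaining $\alpha$-, $\Delta_\lambda$- and constant terms recombine. On the right-hand side I would use the symmetry $G(q)+G(-q)=1$ to write $G(x-\lambda)=1-G(\lambda-x)$ (resp. $G(x+\lambda)=1-G(-\lambda-x)$) and substitute $\Delta_\lambda(x)=G(\lambda-x)-G(-\lambda-x)$ from \eqref{eq.delta_def}; after collecting the coefficients of $G(\lambda-x)$ and $G(-\lambda-x)$, both sides collapse to the same expression. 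No inequality is needed here, as the two quantities are literally equal.

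Chaining the reformulations with these identities gives the claims at once: $\operatorname{sign}(R_2-R_1)=\operatorname{sign}(B_2-B_1)=\operatorname{sign}(B_1-G(x-\lambda))=\operatorname{sign}(\lambda+R_1-x)$ for (i), and symmetrically $\operatorname{sign}(R_2-R_3)=\operatorname{sign}(R_3-x-\lambda)$ for (ii). I expect the only delicate point to be the bookkeeping of the extended-real conventions for $G^{-1}$, not the algebra: one must check that $\operatorname{sign}(R_2(x)-R_1(x))=\operatorname{sign}(B_2(x)-B_1(x))$ still holds when $R_2(x)=\pm\infty$. This is immediate once $R_1(x)$ (resp. $R_3(x)$) is finite: one always has $B_1(x)<1$, while finiteness of $R_1(x)$ forces $B_1(x)>-1$, so $B_2(x)\geq 1$ yields both $R_2(x)=+\infty$ and $B_2(x)>B_1(x)$, whereas $B_2(x)\leq -1$ yields both $R_2(x)=-\infty$ and $B_2(x)<B_1(x)$. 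In either case the sign of the comparison against the finite value matches that of $B_2(x)-B_1(x)$, and with $\operatorname{sign}(\pm\infty)=\pm1$ the remaining cases are closed.
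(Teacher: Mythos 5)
Your proof is correct and follows essentially the same route as the paper's: both reduce the comparisons to the arguments of $G^{-1}$ via monotonicity and exploit the exact linear relations $B_2(x)+G(x-\lambda)=2B_1(x)$ and $B_2(x)+G(x+\lambda)=2B_3(x)$ (the paper phrases these through the identities \eqref{eq.R1_identity}--\eqref{eq.R3_identity} and treats the signs $<,=,>$ as three separate cases, whereas you derive the two-sided identity once). Your explicit handling of the extended-real conventions for $R_2$ is a welcome addition that the paper leaves implicit.
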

\begin{proof}
	{\it (i):} We show that $R_2(x)-R_1(x)<0$ implies  $\lambda+R_1(x)-x<0.$ Define
	\begin{align}
		D(X) := \frac{1-w}w  g(X) + 1-\Delta_{\lambda}(X),
		\label{eq.D_def}
	\end{align}
	for the rescaled denominator in the Bayes formula and observe that 
	\begin{align}
		R_1(X) = G^{-1}\Big(\frac 12 + \frac{1-\alpha -\Pi(0|X)}{2} D(X) \Big).
		\label{eq.R1_identity}
	\end{align}
	Arguing similarly as for \eqref{eq.R1_identity} and using $\Delta_\lambda(X)=G(\lambda-X)-G(-\lambda-X),$
	\begin{align}
		R_2(X) = G^{-1}\Big(G(\lambda-X) + \big(1-\alpha -\Pi(0|X)\big) D(X) \Big).
		\label{eq.R3_identity}
	\end{align}
	Recall that $G^{-1}$ is strictly increasing. 	From \eqref{eq.R1_identity} and \eqref{eq.R3_identity}, we have $R_2(x)-R_1(x)<0$ if and only if 
	\begin{align*}
		\frac 12 > G(\lambda-x) + \frac{1-\alpha -\Pi(0|X=x)}{2}D(x)
	\end{align*}
	which is equivalent to 
	\begin{align*}
		-R_1(x) &= - G^{-1}\Big( \frac{1}{2}+ \frac{1-\alpha -\Pi(0|X=x)}{2} D(x) \Big) \\
		&= G^{-1}\Big( \frac{1}{2}- \frac{1-\alpha -\Pi(0|X=x)}{2} D(x) \Big) \\
		&> \lambda -x,
	\end{align*}
	where in the first equation we use \eqref{eq.R1_identity} and for the second equation the fact that $G^{-1}(1/2-z)=-G^{-1}(1/2+z)$ for all real $z.$ The inequality can be rewritten as $\lambda+R_1(x)-x <0.$ By following the same arguments one can also show that $R_2(x)-R_1(x)=0$ implies  $\lambda+R_1(x)-x=0$ and $R_2(x)-R_1(x)>0$ implies  $\lambda+R_1(x)-x>0.$ This completes the proof for $(i).$
	
	{\it (ii):} We show that $R_3(x) - x -\lambda>0$ implies $R_2(x)-R_3(x)>0.$ Rewriting  $R_3(x) - x -\lambda>0$ and using the definition of $R_2$ and $R_3$ in \eqref{eq.Rsdef}, 
	\begin{align*}
		G(x+\lambda) < 1- \frac{\alpha}{2} - \frac{1-w}{2w}\alpha g(x) + \frac{\alpha}{2}\Delta_\lambda(x)
	\end{align*}
	and therefore
	\begin{align*}
		R_2(x) &= G^{-1}\Big( 1 -\alpha - \frac{1-w}{w}\alpha g(x) +\alpha \Delta_\lambda(x) +G(-\lambda-x)\Big)\\
		&> G^{-1}\Big( G(x+\lambda) -\frac{\alpha}2 - \frac{1-w}{2w}\alpha g(x) +\frac{\alpha}2 \Delta_\lambda(x) +G(-\lambda-x)\Big)\\
		&= R_3(x),
	\end{align*}
	using that $G(x+\lambda)+G(-x-\lambda)=1$ for the last step.  Hence $R_2(x) >R_3(x).$ The other parts of $(ii)$ follow by the same arguments.
\end{proof}

\begin{proof}[Proof of Lemma \ref{lem.partition}]
	Observe that a fixed $x^* \in \mathcal{T}_{\alpha}$ determines the values $R_1(x^*)$ and $R_3(x^*).$ If $-\lambda + R_3(x^*) <\lambda + R_1(x^*),$ then there exists exactly one regime containing $|x^*|$ proving the result for this case.  If $-\lambda + R_3(x^*) \geq \lambda + R_1(x^*),$ then, $\mathcal{X}_{\textnormal{II}}$ is empty but it could well happen that $|x^*|\in \mathcal{X}_{\textnormal{I}} \cap \mathcal{X}_{\textnormal{III}}$ if $-\lambda + R_3(x^*) \geq |x^*| > \lambda + R_1(x^*).$ Suppose this is possible. Recall that $R_1(x^*)=R_1(|x^*|)$ and $R_3(x^*)=R_3(|x^*|).$ By Lemma \ref{lem.sign.equiv} it follows then that $R_2(|x^*|)<R_1(|x^*|)$ and $R_2(|x^*|)\geq R_3(|x^*|).$ Hence $R_1(|x^*|)>R_3(|x^*|).$ This is, however, a contradiction to the definition in \eqref{eq.Rsdef} implying $R_1(x)\leq R_3(x)$ for all $x.$
\end{proof}

\begin{proof}[Proof of Theorem \ref{thm.LU}]	
	We only discuss the case $0<w<1.$ For $w=1$ the result can be obtained by following the same arguments. Since the point mass of the posterior at $0$ is contained in the HDP and $|x| > t_{\alpha},$
	\begin{align*}  
		\Pi\big(\operatorname{HPD}_{\alpha}(X)\setminus\{0\} \big| X \big)  & = 1-\alpha    -\Pi(0|X)  >0 \; .
	\end{align*}
	By \eqref{eq.HPD_as_level_set} it is sufficient to construct a posterior level set that contains $1-\alpha -\Pi(0|X)$ of the posterior mass. 
	
	(i) Suppose $X \in \mathcal{X}_{\text{I}}$ and $X$ positive, the result follows similarly for $X$ negative. It must hold that $X>\lambda.$ Due to the assumptions on $g,$ the posterior density is centered at $X.$ The posterior density is symmetric around $X,$ in the sense that for $a \leq X- \lambda,$ $\pi(X-a |X)=\pi(X+a|X).$
	
	Consider now the interval $[L_\alpha(X),U_\alpha(X)]=[X - R_1(X), X + R_1(X)].$ We show that this interval has posterior probability $1-\alpha    -\Pi(0|X).$ Since $R_1(X)< X-\lambda$ by definition of $\mathcal{X}_{\text{I}},$ it follows that $[\operatorname{L}_{\alpha}(X),\operatorname{U}_{\alpha}(X)] \cup\{0\}$ is the unique $(1-\alpha)$-HPD credible set. 
	Since $0 \not \in [\operatorname{L}_{\alpha}(X),\operatorname{U}_{\alpha}(X)]$ in this regime, using the definition of $D(X)$ given in \eqref{eq.D_def} and the representation of $R_1$ in \eqref{eq.R1_identity}, we have
	\begin{align*} 
		\Pi\big([\operatorname{L}_{\alpha}(X),\operatorname{U}_{\alpha}(X)] \big | X \big)  &= \frac{G(\operatorname{U}_{\alpha}(X) - X) - G(\operatorname{L}_{\alpha}(X) - X)}{D(X)}
		= \frac{G(R_1(X)) - G(- R_1(X))}{D(X)}\\
		&= \frac{2G(R_1(X)) - 1}{D(X)} = 1-\alpha -\Pi(0|X).
	\end{align*}
	The result for $X \in \mathcal{X}_{\text{I}}$ is obtained since $- \operatorname{U}_{\alpha}(-X) = X -R_1(-X) =X-R_1(X) =\operatorname{L}_{\alpha}(X).$

	(ii) Suppose $X \in \mathcal{X}_{\text{II}}.$ Since $-X \in \mathcal{X}_{\text{IV}},$ we have $\operatorname{L}_{\alpha}(X)=-\operatorname{U}_{\alpha}(-X) =\lambda.$ Using \eqref{eq.R3_identity},
	\begin{align}
		\Pi\big([\operatorname{L}_{\alpha}(X),\operatorname{U}_{\alpha}(X)] \big | X \big) = \frac{G(R_2(X))-G(\lambda-X)}{D(X)} = 1-\alpha -\Pi(0|X).
		\label{eq.R2_satisfies}
	\end{align}
	It remains to show that this is a level set. Since the posterior has zero mass on $(-\lambda,\lambda)\setminus\{0\},$ this is the same as saying that the posterior density at $-\lambda$ is strictly smaller than the posterior density at $\operatorname{U}_{\alpha}(X)$ or equivalently, 
	\begin{align*}
		g(X-(-\lambda)) < g(X-\operatorname{U}_{\alpha}(X)).
	\end{align*} 
	By the definition of regime $\mathcal{X}_{\text{II}},$ $X+\lambda >R_3(X)>0$ and since $g$ is symmetric and strictly decreasing on $(0, \infty),$ $g(X+\lambda)< g(R_3(X))=g(-R_3(X))=  g(X-\operatorname{U}_{\alpha}(X)).$ This completes the proof for (ii).

	(iii) Suppose $X \in \mathcal{X}_{\text{III}}.$ For this regime to be non-empty, we must have $R_3(X)\geq \lambda.$ In this case $\operatorname{U}_{\alpha}(X)=X+R_3(X)$ and $\operatorname{L}_{\alpha}(X) =-\operatorname{U}_{\alpha}(-X) =X-R_3(X),$ thanks to the symmetry $R_3(X)=R_3(-X).$ By definition of regime $\mathcal{X}_{\text{III}},$  $\operatorname{U}_{\alpha}(X) \geq \lambda$ and $\operatorname{L}_{\alpha}(X) \leq -\lambda.$ For the posterior mass, we find with $G(-q)=1-G(q)$ and $R_3(X) =G^{-1}(1- \tfrac \alpha 2 D(X)),$
	\begin{align*}
		\Pi\big([\operatorname{L}_{\alpha}(X),\operatorname{U}_{\alpha}(X)] \setminus \{0\} \big | X \big) 
		&= 
		1- \Pi(0|X)- \Pi\big((-\infty,\operatorname{L}_{\alpha}(X))  \big | X \big)  - \Pi\big((\operatorname{U}_{\alpha}(X), \infty) \big | X \big) \\
		&= 1 - \Pi(0|X) - \frac{G(\operatorname{L}_{\alpha}(X)-X)}{D(X)} - \frac{1 - G(\operatorname{U}_{\alpha}(X)-X)}{D(X)} \\
		&= 1 - \Pi(0|X) + \frac{2G(R_3(X))-2}{D(X)} \\
		&=1 -\Pi(0|X) -\alpha.
	\end{align*}
	To see that $[\operatorname{L}_{\alpha}(X),\operatorname{U}_{\alpha}(X)]$ is a level set, we can argue as in the proof for $(i).$
	
	(iv) From \eqref{eq.posterior} and the symmetry $g(x)=g(-x)$ for any $x,$ it follows that if $A$ is a posterior level set given observation $X=x,$ then $-A=\{-a: a\in A\}$ is a level set with the same posterior probability given that we observe $X=-x.$ If $x\in \mathcal{X}_{\text{IV}},$ we have $-x \in \mathcal{X}_{\text{II}}$ and $\operatorname{L}_{\alpha}(x)=-\operatorname{U}_{\alpha}(-x)=x-R_2(-x).$ With $(ii),$
	\begin{align*}
		\Pi\big([\operatorname{L}_{\alpha}(x), \operatorname{U}_{\alpha}(x)] \, \big | \, X=x\big) 
		&= \Pi\big([x-R_2(-x), -\lambda] \, \big | \, X=x\big) \\
		&= \Pi\big([\lambda, -x+R_2(-x)] \, \big | \, X=-x\big ) \\
		&= 1-\Pi(0 |X=-x) -\alpha.
	\end{align*}
	Moreover, \eqref{eq.post_point_mass} shows that $\Pi(0 |X=-x)=\Pi(0 |X=x)$ and this completes the proof for $(iv).$
\end{proof}

\begin{proof}[Proof of Lemma \ref{lem.basic_props_Ualpha}]
	{\it (i):} From \eqref{eq.Rsdef}, we obtain $R_1(x) \leq G^{-1}(1-\alpha/2)$ which together with the definition of regime $\mathcal{X}_{\textnormal{I}}$ yields the conclusion. 
	
	{\it (ii):} We need to show that $R_3(0)\geq \lambda.$ By rewriting we find that $\Pi(0|X=0) < 1-\alpha$ implies $\alpha (1-w)g(0)/(2w) < (1-\alpha)G(-\lambda).$ With \eqref{eq.Rsdef}, 
	\begin{align*}
		R_3(0) > G^{-1}\big( 1 -(1-\alpha) G(-\lambda) -\alpha G(-\lambda) \big) = \lambda.
	\end{align*}
	Hence $0 \in \mathcal{X}_{\text{III}}.$
	
	{\it (iii):} By $(i)$ and $(ii)$ it remains to show that $\mathcal{X}_{\text{II}}$ (and thereby $\mathcal{X}_{\text{IV}}$) is non-empty. In a first step, we show that that there exists a solution $\lambda + R_1(x^*)-x^*=0.$ Since $g\in \mathcal{G},$ $R_1$ is continuous and we will apply the intermediate value theorem to the function $\lambda + R_1(x)-x.$ For $x=0,$ we have that the value of the function is positive since $t_\alpha = -\infty$ and $R_1(x)\geq 0$ by Lemma \ref{lem.well_def}. For $x \uparrow \infty$ we use $\lambda + R_1(x)-x\leq \lambda + G^{-1}(1-\alpha/2)-x$ to see that the function eventually becomes negative. Thus, $\lambda + R_1(x^*)-x^*=0$ for some $x^*\geq 0$ by the intermediate value theorem. 
	
	Now we prove that for this solution $x^*,$ $-\lambda + R_3(x^*) < x^*,$ therefore implying $x^*\in \mathcal{X}_{\text{II}}$ by definition of the regime $\mathcal{X}_{\text{II}}.$ Using \eqref{eq.Rsdef}, we have for any $x,$ $G(R_3(x))=G(R_1(x))+\Delta_{\lambda}(x)/2.$ Since $R_1(x^*)=x^*-\lambda,$ rewriting $\Delta_{\lambda}(x^*)$ and using the monotonicity of $G$ yields
	\begin{align*}
		G(R_3(x^*)) = G(x^* - \lambda) +\frac{1}{2}\Delta_{\lambda}(x^*) = \frac{G(x^* - \lambda) + G(x^* + \lambda)}2 < G(x^* + \lambda) \;.
	\end{align*}
	Thus also $-\lambda + R_3(x^*) < x^*$ and hence $x^* \in \mathcal{X}_{\text{II}}$. 
	
	{\it (iv):} Using (i) and the definition of $\mathcal{X}_{\textnormal{III}},$ it is enough to show that $R_3(x)<\lambda$ for all $|x|\leq \lambda+G^{-1}(1-\alpha/(1+\alpha))=:\widetilde{x}.$ Observe that for any $|x|\leq \widetilde{x},$ $R_3(x)\leq G^{-1}(1-\tfrac{1-w}{2w}\alpha g(\widetilde{x})).$ For all sufficiently small $w,$ the right hand side is strictly smaller than $\lambda.$
	
	{\it (v):} In part (a) of the proof we show that $\mathcal{X}_{\textnormal{III}}$ is the empty set and in part (b) we show that $t_\alpha < \lambda$ for all sufficiently large $\lambda.$ Part (c) combines the results from (a) and (b).
	
	(a) By symmetry, it is sufficient to consider $x > 0$. We show that for sufficiently large $t_\alpha$ and all $x>t_\alpha \vee 0,$ we have $R_3(x)< x+\lambda$ which then implies that $\mathcal{X}_{\textnormal{III}}$ must be empty. By Lemma \ref{lem.tech.delta} (iii), $x\mapsto 1 - \Delta_{\lambda}(x) = G(x - \lambda) + G(-x - \lambda)$ and $x\mapsto 1/g(x)$ are strictly increasing on $(0, \infty).$ Recall that $t_\alpha=t_\alpha(g, \lambda,w)$ is the solution to  
	\begin{align}
		\frac{w}{1-w} \frac{G(t_{\alpha}-\lambda)+G(-t_{\alpha}-\lambda)}{g(t_{\alpha})} = \frac{\alpha}{1-\alpha}.
		\label{eq.eqeq}
	\end{align}
	Thus, by increasing $\lambda,$ we can make $t_\alpha$ arbitrary large. In particular, we choose $\lambda^*$ such that for any $\lambda\geq \lambda^*,$ $t_\alpha$ is such that $G(-\lambda-t_\alpha)- \tfrac{\alpha}2 G(t_\alpha-\lambda)<0.$ Using the monotonicity of $G,$ this also implies that $G(-\lambda-x)- \tfrac{\alpha}2 G(x-\lambda)<0$ for all $x\geq t_\alpha\vee 0.$ This yields the second inequality in
	\begin{align*}
		R_3(x)\leq G^{-1}\Big(1-\frac{\alpha}{2}+\frac{\alpha}{2}G(\lambda-x)\Big) < x+\lambda,
	\end{align*}
	using \eqref{eq.Rsdef} for the first inequality together with $G(x + \lambda) = 1 - G(-\lambda - x)$ and $G(\lambda - x) = 1 - G(x - \lambda).$ This completes the proof for (a).
	
	(b) We show that $t_\alpha < \lambda$ for large $\lambda$ by contradiction. Thus, suppose $t_{\alpha} \geq \lambda$. By the monotonicity properties of $1/g$ and $G$ used in (a),
	\begin{align*}
		\frac{w}{1-w} \frac{G(t_{\alpha} - \lambda) + G(-t_{\alpha} - \lambda)}{g(t_{\alpha})} \geq \frac{w}{1-w} \frac{G(\lambda - \lambda) + G(-\lambda - \lambda)}{g(\lambda)}.
	\end{align*}
	For all sufficiently large $\lambda,$ the right hand side is strictly larger than $\alpha/(1-\alpha)$. This is a contradiction to the fact that $t_\alpha$ is a solution of \eqref{eq.eqeq}. Hence $t_\alpha < \lambda.$
	
	(c) Since $\mathcal{X}_{\textnormal{I}}$ only contains $x$ with $|x|>t_\alpha \vee \lambda,$ it follows that $(t_\alpha, \lambda] \cap  \mathcal{X}_{\textnormal{I}}$ is empty. Since $\mathcal{X}_{\textnormal{IV}} \cap \{x:x>t_\alpha\vee 0\}=\varnothing,$ we conclude that $(t_{\alpha}, \lambda] \subseteq \mathcal{X}_{\textnormal{II}}$.
\end{proof}

\begin{proof}[Proof of Theorem \ref{thm.altern_Ua}]
	We show that the formula holds for each of the regimes with regime $\mathcal{X}_{\textnormal{I}}$ being further subdivided into positive and negative $x.$
	
	Suppose that $x> \lambda +R_1(x)$ and $x>t_\alpha.$ By the definition of $R_1, R_3$ in \eqref{eq.Rsdef}, we have $R_1(x) <R_3(x)$ for all $x.$ By  Lemma \ref{lem.sign.equiv} (i), we can conclude that for all $x> \lambda +R_1(x),$ $R_1(x) = (R_3(x) \wedge R_2(x)) \vee R_1(x).$ By Theorem \ref{thm.LU}, $\operatorname{U}_{\alpha}(x) = H_1(x)$ for $x> \lambda +R_1(x).$ Since by Lemma \ref{lem.well_def}, $R_1(x)\geq 0$ we have moreover in this regime $H_1(x)=x+R_1(x)>\lambda.$
	
	Consider now $x\in \mathcal{X}_{\textnormal{II}}.$ By definition of the regime, $-\lambda+R_3(x) < x \leq \lambda +R_1(x).$ By Lemma \ref{lem.sign.equiv}, we have thus $R_1(x) \leq R_2(x) < R_3(x)$ and $\operatorname{U}_{\alpha}(x) = x+R_2(x) = x+ (R_2(x) \wedge R_3(x)) \vee R_1(x) =H_1(x).$ By \eqref{eq.R2_satisfies}, $H_1(x)=x+R_2(x) > \lambda.$
	
	Next we study $x \in \mathcal{X}_{\textnormal{III}}.$ We show that then $R_3(x) \leq R_2(x).$ Notice that $R_2(-u)\geq R_2(u)$ for all $u\geq 0.$ Since also $R_3(x)=R_3(-x),$ it is enough to show the inequality for $x\geq 0.$ This, however, follows immediately from Lemma \ref{lem.sign.equiv} (ii). Since it always holds that $R_1(x)< R_3(x),$ we obtain $\operatorname{U}_{\alpha}(x) = x+R_3(x) =H_1(x).$ Let us now prove that also $x+R_3(x)>\lambda$ in this case. Suppose not. The credible set is $[\operatorname{L}_{\alpha}(x),\operatorname{U}_{\alpha}(x)]=[x-R_3(x),x+R_3(x)]$ in this regime. If $x+R_3(x) \leq \lambda,$ the posterior coverage of $[\operatorname{L}_{\alpha}(x),\operatorname{U}_{\alpha}(x)]\setminus \{0\}$ is zero. It has to be, however, that $1-\alpha -\Pi(0|X=x)>0.$ This is a contradiction and we must have $x+R_3(x)=H_1(x)>\lambda.$
	
	If $x \in \mathcal{X}_{\textnormal{IV}},$ then $\operatorname{U}_{\alpha}(x)=-\lambda$ and $-x \in \mathcal{X}_{\textnormal{II}}.$ The latter implies that $-x\leq \lambda+R_1(-x)=\lambda+R_1(x).$ Thus $H_2(x)=(-\lambda)\wedge (x+R_1(x))=-\lambda=\operatorname{U}_{\alpha}(x).$ For this regime, it remains to show that $H_1(x)<\lambda.$ Since $R_1(x) \leq R_3(x),$ for all $x,$ we also have that $H_1(x) \leq x+R_3(x).$ Since $-x \in \mathcal{X}_{\textnormal{II}},$ we also have $-\lambda+R_3(x)< -x$ which combined with the previous inequality gives $H_1(x) < \lambda.$
	
	Finally suppose that $-x > \lambda+R_1(x)$ and $-x>t_\alpha.$ Thus $x \in \mathcal{X}_{\textnormal{I}},$ $\operatorname{U}_{\alpha}(x) = x+R_1(x)< -\lambda$ and $\operatorname{U}_{\alpha}(x)=H_2(x).$ To show that $H_1(x)< \lambda,$ observe that by Lemma \ref{lem.partition}, $x\notin \mathcal{X}_{\textnormal{III}}.$ By definition of $\mathcal{X}_{\textnormal{III}},$ $x\notin \mathcal{X}_{\textnormal{III}}$ and $x<0$ implies $-x>-\lambda+R_3(x).$ Arguing as above, we thus have $H_1(x) \leq x+R_3(x) < \lambda.$
	
	As we have treated all possible cases, the proof is complete.
\end{proof}

\begin{proof}[Proof of Lemma \ref{lem.Ua_decrease}]
	We use the following closed forms for the Laplace distribution: $g(x) = \frac{1}{2}\exp(-|x|)$ for any real $x$, $G(x) = \frac{1}{2}\exp(x) = g(-x)$ for $x \leq 0$ and $G^{-1}(p) = -\ln(2(1-p))$ for $p \in [\frac{1}{2},1)$.
	
	The interval $( \tfrac 12 \ln(\tfrac{2}{\alpha}) ,  \ln(\frac{1-\alpha}{\alpha} \frac{w}{1-w}))$ is non-empty. To see this observe that $\sqrt{\frac{2}{\alpha}} < \frac{1-\alpha}{\alpha} \frac{w}{1-w}$ for $\alpha <\frac{1}{2}$ and $w \in (\sqrt{2\alpha},1].$
	
	Next, we show that $t_{\alpha} =-\infty,$ that is, $\Pi(0|X=0)< 1-\alpha.$ From the formula for the posterior and using that $1-\Delta_{\lambda}(0)= 2G(-\lambda),$ it is sufficient to verify that
	\begin{align} \label{eq.laplace.ta}
		\frac{2w G(- \lambda)}{(1-w)g(0)} > \frac{\alpha}{1-\alpha} \;.
	\end{align}
	Since $g$ and $G$ are the p.d.f. and c.d.f. of the Laplace distribution, the previous inequality is equivalent to $\frac{2w}{1-w} \exp(-\lambda) > \frac{\alpha}{1-\alpha}$. This clearly holds for $\lambda <  \ln(\frac{1-\alpha}{\alpha} \frac{w}{1-w})$ proving that $\Pi(0|X=0)< 1-\alpha$ and thus $t_\alpha=-\infty.$

	We prove now that $(\tfrac 12\ln(\frac{2}{\alpha}),\lambda) \subset \mathcal{X}_{\textnormal{II}}$. By definition of $R_3$ and $\Delta_\lambda,$
	\begin{align*}
		R_3(x) < G^{-1}\Big(1 - \frac{\alpha}{2}\big(1 - G(\lambda - x)\big)\Big)= G^{-1}\Big(1 - \frac{\alpha}{2}G(x-\lambda)\Big) \; .
	\end{align*}
	For $x< \lambda,$ we have $1 - \frac{\alpha}{2} G(x-\lambda)> 1/2$ and with $G^{-1}(p) = -\ln(2(1-p))$ for $p \in [\frac{1}{2},1),$ the right hand side of the previous display becomes
	\begin{align*}
		- \log\Big(\frac{\alpha}{2}\exp(x-\lambda)\Big) = \ln\Big(\frac{2}{\alpha}\Big) - x + \lambda \;.
	\end{align*}
	Hence $R_3(x) < x + \lambda$ if $x \in (\tfrac 12 \ln(\frac{2}{\alpha}), \lambda).$ This interval is thus not in $\mathcal{X}_{\textnormal{III}}$. Because $0<x < \lambda$ also implies $x \not \in \mathcal{X}_{\textnormal{I}},$ we conclude by Lemma \ref{lem.partition} that $(\tfrac 12\ln(\frac{2}{\alpha}),\lambda) \subset \mathcal{X}_{\textnormal{II}}.$
	
	As a final step, we now show that $\operatorname{U}_{\alpha}$ is decreasing on $(\tfrac 12 \ln(\frac{2}{\alpha}), \lambda) \subset \mathcal{X}_{\textnormal{II}}$. Let $x\in (\tfrac 12 \ln(\frac{2}{\alpha}), \lambda).$ By Theorem \ref{thm.LU}, $\operatorname{U}_{\alpha}(x) = x + R_2(x)$ and
	\begin{align*}
		\operatorname{U}_{\alpha}'(x) = 1 - \frac{\alpha \frac{1-w}{w}g'(x) + \alpha g(\lambda - x) + (1-\alpha)g(-\lambda - x)}{g(R_2(x))}=:1-B \;.
	\end{align*}
	For the Laplace$(0,1)$ density, we have $g(x) = -g'(x) < 0$ for any $x > 0$. Hence, the numerator of the fraction $B$ can be rewritten as
	\begin{align*}
		- \alpha\frac{1-w}{2w}e^{-x} + \frac{\alpha}{2}e^{x- \lambda} + \frac{1-\alpha}{2}e^{-\lambda - x} =: -s_1 + s_2 + s_3\;,
	\end{align*}
	For the denominator of the fraction, use that $R_2(x) \geq 0$ for any $x \in \mathcal{X}_{\textnormal{II}}$ and $g(G^{-1}(p)) = 1-p$ for $p \in [\frac{1}{2},1)$ for the Laplace distribution. Applying the definition of $R_2$, we have $g(R_2(x)) =  \alpha \frac{1-w}{w}g(x) + \alpha G(x - \lambda)  - (1-\alpha)G(-\lambda - x)$, using that $1 - G(\lambda - x) = G(x - \lambda).$ For the Laplace distribution, we thus find
	\begin{align*}
		g(R_2(x)=\alpha \frac{1-w}{2w}e^{-x} + \frac{\alpha}{2}e^{x - \lambda} - \frac{1-\alpha}{2}e^{-\lambda - x} = s_1 + s_2 - s_3\;.
	\end{align*}
	Since by assumption $\lambda < \ln(\frac{1-\alpha}{\alpha} \frac{w}{1-w}),$ we have that
	\begin{align*}
		s_1 = \alpha\frac{1-w}{2w}e^{-x} <  \frac{1-\alpha}{2}e^{-x-\lambda} = s_3\; ,
	\end{align*}
	it follows that $\operatorname{U}_{\alpha}'(x) = 1- (-s_1 + s_2 + s_3)/(s_1 + s_2 - s_3)<0.$ This completes the proof. 
\end{proof} 

\subsection{Proofs for Section \ref{sec.coverage}}

\begin{proof}[Proof of Lemma \ref{lem.decrease_cov}]
	Recall that for most notation, we omitted the dependence on  the mixing weight on the slab prior distribution $w.$ By definition, we have that the functions $R_1,R_2,R_3$ are monotone increasing in $w$ on any point $x$ these functions are defined on. Thus, the functions $H_1$ and $H_2$ in Theorem \ref{thm.altern_Ua} are decreasing in $w.$ Since $H_2 \leq 0$ and $H_1\geq 0,$ also $\operatorname{U}_{\alpha}$ is decreasing in $w$ on any point $|x|>t_\alpha.$  Because of $\operatorname{L}_{\alpha}(x)=-\operatorname{U}_{\alpha}(-x),$  $\operatorname{L}_{\alpha}$ must be increasing in $w$ on any point $|x|>t_\alpha.$ Using that $g\in \mathcal{G},$ \eqref{eq.pt_mass_zero} implies that $t_\alpha$ increases if $w$ decreases. Thus, for any $x,$ the set $[\operatorname{L}_{\alpha}(x),\operatorname{U}_{\alpha}(x)] \cap \{|x| > t_{\alpha}\}$ becomes smaller if $w$ decreases. This decreases the probability on the right hand side of \eqref{eq.credible_rewritten} which coincides with the frequentist coverage.
\end{proof}

\begin{proof}[Proof of Lemma \ref{lem.sum_cov}]
	We show that for any $|x|>t_\alpha,$ $\theta_0 \in [\operatorname{L}_{\alpha}(x),\operatorname{U}_{\alpha}(x)]$ if and only if either $\theta_0\in [\operatorname{L}_{\alpha}(x),x]$ or $\theta_0 \in (x,\operatorname{U}_{\alpha}(x)].$ If $x\in \mathcal{X}_{\textnormal{I}} \cup \mathcal{X}_{\textnormal{III}},$ then by Lemma \ref{lem.well_def} and Theorem \ref{thm.LU}, $\operatorname{L}_{\alpha}(x) < x < \operatorname{U}_{\alpha}(x)$ and the claim holds. If $x\in \mathcal{X}_{\textnormal{II}},$ then, $\operatorname{L}_{\alpha}(x)=\lambda$ and by arguing as in the proof of Theorem \ref{thm.altern_Ua}, $R_2(x)\geq R_1(x)>0.$ There are two cases. First if $\operatorname{L}_{\alpha}(x)\leq x< x+R_2(x)=\operatorname{U}_{\alpha}(x)$ the claim holds immediately. The second case is that $x<\operatorname{L}_{\alpha}(x)<\operatorname{U}_{\alpha}(x).$ Then, $[\operatorname{L}_{\alpha}(x),x]$ is empty. Moreover, because of $\theta_0>\lambda,$ we have $\theta_0 \in [\operatorname{L}_{\alpha}(x),\operatorname{U}_{\alpha}(x)]=[\lambda,\operatorname{U}_{\alpha}(x)]$ if and only if $\theta_0 \in (x,\operatorname{U}_{\alpha}(x)].$ Thus the claim also follows for $\mathcal{X}_{\textnormal{II}}.$ Finally in regime $\mathcal{X}_{\textnormal{IV}},$ we have $\operatorname{L}_{\alpha}(x)<\operatorname{U}_{\alpha}(x)=-\lambda.$ For the claim to hold it is enough to check that $x\leq \lambda.$ This holds since $-x\in \mathcal{X}_{\textnormal{II}}$ and thus $x\leq \lambda-R_3(x)<\lambda.$
\end{proof}

\begin{proof}[Proof of Lemma \ref{lem.cov_reform}]
	In a first step, we show that on $\{x:|x|>t_\alpha\},$ the functions $\operatorname{U}_{\alpha}$ and $\operatorname{L}_{\alpha}$ can only jump from $-\lambda$ to $\lambda$ or back and are otherwise continuous. Because of $\operatorname{L}_{\alpha}(x)=-\operatorname{U}_{\alpha}(-x),$ it is enough to verify this for $\operatorname{U}_{\alpha}.$ By Theorem \ref{thm.altern_Ua} and \eqref{eq.Rsdef}, $H_1$ and $H_2$ are continuous and a jump in $\operatorname{U}_{\alpha}$ can only occur from a value $\leq -\lambda$ to $\lambda$ (or the other way around). From the proof of Theorem \ref{thm.altern_Ua}, we know that $\operatorname{U}_{\alpha}(x)<\lambda$ if and only if $x\in (\mathcal{X}_{\textnormal{I}} \cap (-\infty,0])\cup \mathcal{X}_{\textnormal{IV}}=:\mathcal{Y}.$ On $\mathcal{Y},$ $\operatorname{L}_{\alpha}$ is continuous. Thus, a jump from a value $<-\lambda$ to $\lambda$ would lead to a strict increase of the posterior credibility, which contradicts the fact that $\{0\}\cup ([\operatorname{L}_{\alpha}(x),\operatorname{U}_{\alpha}(x)]\setminus(-\lambda,\lambda))$ are $(1-\alpha)$-credible sets for all $x.$ Thus, if there is a jump it has to be from $-\lambda$ to $\lambda.$ 
	
	Next we study what happens with $\operatorname{U}_{\alpha}$ and $\operatorname{L}_{\alpha}$ in $[-t_\alpha,t_\alpha].$ We must have $\lim_{x\downarrow t_\alpha} \operatorname{U}_{\alpha}(x)=\lim_{x\downarrow t_\alpha} \operatorname{L}_{\alpha}(x)$ as otherwise the $\operatorname{HPD}_\alpha(X)$ credible set would cover more than $1-\alpha$ of the posterior mass for $X\downarrow t_\alpha.$ By checking all regimes individually, the only values that can occur for the limits are $\{t_\alpha, \lambda\}.$ For the same reason also $\lim_{x\uparrow -t_\alpha} \operatorname{U}_{\alpha}(x)=\lim_{x\uparrow -t_\alpha} \operatorname{L}_{\alpha}(x)\in \{-t_\alpha, -\lambda\}.$ Thus, going from negative $x$ with $x<-t_\alpha$ to positive $x$ with $x>t_\alpha$ induces a jump in the functions $x\mapsto \operatorname{L}_{\alpha}(x)$ and $x \mapsto \operatorname{U}_{\alpha}(x)$ from a function value $\{-t_\alpha, -\lambda\}$ to $t_\alpha$ or $\lambda.$ Except for these discontinuities that only affect the function values in $[-(t_\alpha \vee \lambda) , t_\alpha \vee \lambda],$ the functions $\operatorname{L}_{\alpha}$ and $\operatorname{U}_{\alpha}$ are otherwise continuous by Theorem \ref{thm.altern_Ua}. Since $\lim_{x\downarrow -\infty}\operatorname{L}_{\alpha}(x)=\lim_{x\downarrow -\infty}\operatorname{U}_{\alpha}(x)=-\infty$ and $\lim_{x\uparrow \infty}\operatorname{L}_{\alpha}(x)=\lim_{x\uparrow \infty}\operatorname{U}_{\alpha}(x)=\infty$ and $\theta_0>t_\alpha,$ the intermediate value theorem shows that if there are $x,y$ with $|x|,|y|>t_\alpha$ and $\operatorname{L}_{\alpha}(x)< \theta_0  <\operatorname{L}_{\alpha}(y),$ then there exists $z$ between $x$ and $y,$ such that $|z|>t_\alpha$ and $\operatorname{L}_{\alpha}(z)=\theta_0.$ The same holds also with $\operatorname{L}_{\alpha}$ replaced by $\operatorname{U}_{\alpha}.$
	
	This shows that for $\theta_0>\lambda \vee t_\alpha$ the sets $\operatorname{L}_{\alpha}^{-1}(\theta_0)$ and $\operatorname{U}_{\alpha}^{-1}(\theta_0)$ are non-empty.
	
	In a next step we show that $\theta_0>t_\alpha$ and $\theta_0 \in [\operatorname{L}_{\alpha}(X),X]$ imply that $\theta_0\leq X\leq \sup \operatorname{L}_{\alpha}^{-1}(\theta_0),$ thus proving the upper bound for $C^{-}(\theta_0).$ It suffices to show that $X\leq \sup \operatorname{L}_{\alpha}^{-1}(\theta_0).$ Suppose this is not true and there exists $x^*$ with $|x^*|>t_\alpha,$ satisfying $\operatorname{L}_{\alpha}(x^*) \leq \theta_0$ and $x^*> \sup \operatorname{L}_{\alpha}^{-1}(\theta_0).$ If $\operatorname{L}_{\alpha}(x^*)=\theta_0$ we have a contradiction, thus we can even assume that $\operatorname{L}_{\alpha}(x^*)< \theta_0.$ By the version of the intermediate value theorem proved above, there exists $z>x^*$ with $|z|>t_\alpha$ and $\operatorname{L}_{\alpha}(z)=\theta_0,$ again contradicting $x^*> \sup \operatorname{L}_{\alpha}^{-1}(\theta_0).$ This establishes the upper bound on $C^{-}(\theta_0).$
	
	By following the same arguments as above and using that by assumption $\theta_0>t_\alpha$, one can also prove that $\theta_0\leq X\leq \inf \operatorname{L}_{\alpha}^{-1}(\theta_0)$ implies $\theta_0\in [\operatorname{L}_{\alpha}(X),X].$ Therefore also $\theta_0\in [\operatorname{L}_{\alpha}(X),X]\cap \{|X|>t_\alpha\}.$ This proves the lower bound on $C^{-}(\theta_0).$
	
	The upper and lower bound on $C^{+}(\theta_0)$ can be shown following the same reasoning.
\end{proof}

\begin{proof}[Proof of Lemma \ref{lem.compute}]
	By Lemma \ref{lem.well_def} and Lemma \ref{lem.mon}, the function $R_1$ is positive and monotone increasing on $|x|>t_\alpha$. We show by induction that $a_{k+1}>a_k.$ This is true for $k=0.$ Suppose $a_k>a_{k-1},$ then it follows from the monotonicity of $R_1$ that $a_{k+1}>a_k,$ completing the induction argument. In a second step, we show using induction again, that $a_k\leq  \inf \operatorname{L}_{\alpha}^{-1}(\theta_0)$ for all $k.$ Due to $R_1 \geq 0,$ $a_0\leq  \inf \operatorname{L}_{\alpha}^{-1}(\theta_0).$ For the inductive step, suppose that $a_k\leq  \inf \operatorname{L}_{\alpha}^{-1}(\theta_0)$ for a given $k.$  Then, $R_1(a_k) \leq R_1(\inf \operatorname{L}_{\alpha}^{-1}(\theta_0)),$ hence $\inf \operatorname{L}_{\alpha}^{-1}(\theta_0)-R_1(a_k)\geq \theta_0$ and consequently $a_{k+1}\leq \inf \operatorname{L}_{\alpha}^{-1}(\theta_0)$ completing the inductive step. Since $(a_k)_k$ is increasing and bounded it must have a limit $a=\lim_k a_k$ and this limit satisfies $a=\theta_0+R_1(a)$ implying $a \in \operatorname{L}_{\alpha}^{-1}(\theta_0).$ Since $a_k\leq  \inf \operatorname{L}_{\alpha}^{-1}(\theta_0)$ for all $k,$ we conclude that $a=\inf \operatorname{L}_{\alpha}^{-1}(\theta_0).$
\end{proof}

\begin{proof}[Proof of Lemma \ref{lem.exam}]
	Clearly $g \in \mathcal{G}.$ For all $u \geq  0$ it follows that $G(u) \geq 1/2$, hence $G(3u/2) \leq 1^{1+\gamma} \leq \big(2 G(u) \big)^{1+\gamma}$ for all $\gamma > 0.$ If also $G(3u/2)\leq cG(u)^{1+\gamma}$ for all $u\leq 0,$ then $G(3u/2)\leq (2^{1+\gamma}\vee c) G(u)^{1+\gamma}$ for all real $u.$
	
	Using substitution, $x\leq u \leq 0,$ and $1+\gamma <(3/2)^\eta,$
	\begin{align*}
		G\Big(\frac 32 u\Big) 
		&=\int_{-\infty}^{3u/2} c_\eta e^{-|x|^\eta} \, dx
		= \frac{2}{3} \int_{-\infty}^{u} c_\eta e^{-(\frac 32)^\eta |x|^\eta} \, dx \\
		&\leq \frac{2}{3}c_\eta e^{-(1+\gamma) |u|^\eta} \int_{-\infty}^u e^{(1+\gamma - (\frac 32)^\eta)|x|^\eta} \, dx
		\leq \frac{2}{3}c_\eta C(\gamma,\eta) e^{-(1+\gamma) |u|^\eta},
	\end{align*}
	with $C(\gamma,\eta):= \int_{-\infty}^0 e^{(1+\gamma - (\frac 32)^\eta)|x|^\eta} \, dx< \infty.$ On the other hand, using that $|u|^\eta-|x|^\eta \geq -|u-x|^\eta$ for any $0\leq \eta \leq 1,$ and the fact that $\int_{-\infty}^0 e^{-|x|^\eta} \, dx =1/(2c_\eta),$ we have
	\begin{align}
		G(u)= \int_{-\infty}^u c_\eta e^{-|x|^\eta} \, dx
		\geq  c_\eta e^{-|u|^\eta} \int_{-\infty}^u  e^{-|u-x|^\eta} \, dx = \frac 12 e^{-|u|^\eta}.
		\label{eq.exam2}
	\end{align}
	Combining the last two displays, we have that $G(3u/2)\leq cG(u)^{1+\gamma}$ for all $u\leq 0$ and $c$ a sufficiently large constant.
	
	The second part of the condition holds for any $\gamma \leq 1$ using $1-G(q)=G(-q)$ and \eqref{eq.exam2}.
\end{proof}

\begin{lem}[Upper bound on $\Delta_\lambda(x)$ in regime $\mathcal{X}_{\textnormal{I}}$] \label{lem.tech.general.lambda-xl}
	For $g\in \mathcal{G}$ and any $x \in \mathcal{X}_{\textnormal{I}}$ with $|x| > t_{\alpha}$,
	\begin{align*}
		\Delta_\lambda(x) < G(\lambda - x) < \frac{\alpha}{1+\alpha}\Big(1+\frac{1-w}{w} g(x)\Big) \;.
	\end{align*}
\end{lem}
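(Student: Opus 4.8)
The plan is to read the first inequality straight off the definition of $\Delta_\lambda$ and to obtain the second by converting the defining condition of the regime $\mathcal{X}_{\textnormal{I}}$ into an inequality for $G$. Since $R_1(x)=R_1(-x)$, $g(x)=g(-x)$ and, by Lemma \ref{lem.tech.delta} (ii), $\Delta_\lambda(x)=\Delta_\lambda(-x)$, the regime $\mathcal{X}_{\textnormal{I}}$ and all quantities appearing in the bound are symmetric under $x\mapsto-x$. The estimates are most naturally phrased in terms of $|x|$, so I reduce to the relevant case $x>0$, for which $G(\lambda-x)=G(\lambda-|x|)$.

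For the first inequality, I invoke \eqref{eq.delta_def}: $\Delta_\lambda(x)=G(\lambda-x)-G(-\lambda-x)$, and since $G$ takes values in $(0,1)$ the subtracted term $G(-\lambda-x)$ is strictly positive, whence $\Delta_\lambda(x)<G(\lambda-x)$. Equivalently, using Lemma \ref{lem.tech.delta} (ii), $\Delta_\lambda(x)=\Delta_\lambda(|x|)<G(\lambda-|x|)$, which is the form I use below.

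For the second inequality, I use that $x\in\mathcal{X}_{\textnormal{I}}$ means $|x|>\lambda+R_1(x)$, i.e. $|x|-\lambda>R_1(x)=G^{-1}\big(B_1(x)\big)$, where $B_1(x):=1-\tfrac{\alpha}2-\tfrac{1-w}{2w}\alpha g(x)-\tfrac{1-\alpha}2\Delta_\lambda(x)$ is the argument of $G^{-1}$ in the definition of $R_1$ in \eqref{eq.Rsdef}. By Lemma \ref{lem.well_def}, $R_1(x)$ is finite with $B_1(x)\in(\tfrac12,1)$, so $G(R_1(x))=B_1(x)$ and applying the strictly increasing $G$ to $|x|-\lambda>R_1(x)$ gives $G(|x|-\lambda)>B_1(x)$. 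Rewriting the left side via $1-G(q)=G(-q)$ yields
\begin{align*}
G(\lambda-|x|)=1-G(|x|-\lambda)<1-B_1(x)=\frac{\alpha}2+\frac{1-w}{2w}\alpha g(x)+\frac{1-\alpha}2\Delta_\lambda(x).
\end{align*}
Inserting the first inequality $\Delta_\lambda(x)<G(\lambda-|x|)$ into the last term and collecting the $G(\lambda-|x|)$ contributions on the left gives $\tfrac{1+\alpha}2\,G(\lambda-|x|)<\tfrac{\alpha}2\big(1+\tfrac{1-w}w g(x)\big)$, and dividing by $\tfrac{1+\alpha}2$ produces exactly $G(\lambda-|x|)<\tfrac{\alpha}{1+\alpha}\big(1+\tfrac{1-w}w g(x)\big)$. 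For $x>0$ this is the claimed bound on $G(\lambda-x)$.

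The only delicate points are the invocation of Lemma \ref{lem.well_def} to guarantee $B_1(x)\in(0,1)$ so that $G(R_1(x))=B_1(x)$ is legitimate, and the coefficient bookkeeping. The self-referential appearance of $\Delta_\lambda$ on both sides is precisely what, after absorbing $\tfrac{1-\alpha}2\Delta_\lambda(x)$ into the left-hand side through $1-\tfrac{1-\alpha}2=\tfrac{1+\alpha}2$, upgrades the crude bound to the sharp constant $\tfrac{\alpha}{1+\alpha}$. I do not anticipate any analytic difficulty beyond this substitution and the sign/symmetry reduction to $x>0$.
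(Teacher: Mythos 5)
Your proposal is correct and follows essentially the same route as the paper's proof: the first inequality from positivity of $G(-\lambda-x)$, the regime condition $|x|>\lambda+R_1(x)$ converted via $G$ into $G(\lambda-|x|)<1-B_1(x)$, and the self-referential substitution of $\Delta_\lambda(x)<G(\lambda-|x|)$ to upgrade the constant to $\tfrac{\alpha}{1+\alpha}$. Your explicit reduction to $x>0$ (reading the middle term as $G(\lambda-|x|)$) is a reasonable and slightly more careful handling of a point the paper leaves implicit.
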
	
\begin{proof}
	The first inequality follows immediately from the definition of $\Delta_\lambda(x).$ By the definition of regime $\mathcal{X}_{\textnormal{I}},$ we have $\lambda - x < - R_1(x)$ for $x \in \mathcal{X}_{\textnormal{I}}.$ Using $-G^{-1}(1-p)=G^{-1}(p)$ together with the definition of $R_1,$ we obtain
	\begin{align*}
		G(\lambda - x) < G(-R_1(x)) = \frac{\alpha}{2}\Big(1 + \frac{1-w}{w}g(x)\Big) + \frac{1-\alpha}{2}\Delta_{\lambda}(x) \;.
	\end{align*} 
	With $\Delta_{\lambda}(x) < G(\lambda - x),$ the second inequality follows by rewriting.
\end{proof}

\begin{lem}\label{lem.tech.g(R1)}
	Suppose Assumption \ref{def.tail.decay} holds, for some $c_*$ and $\gamma \in (0,1],$ then, we have for any $x \in \mathcal{X}_{\textnormal{I}},$
	\begin{align*}
		g\big(R_1(x)\big) \leq c_* \alpha^{\gamma} \Big(1 + \frac{1-w}{w}g(0)  \Big)
	\end{align*}
\end{lem}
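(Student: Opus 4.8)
The plan is to reduce everything to a bound on the tail mass $1-G(R_1(x))$ by exploiting the second inequality in Assumption \ref{def.tail.decay}, which converts the density evaluated at a point into a power of the corresponding tail probability. Since $x\in\mathcal{X}_{\textnormal{I}}\subseteq\mathcal{T}_\alpha$, Lemma \ref{lem.well_def} guarantees $R_1(x)>0$, so the assumption applies directly with $y=R_1(x)$ to give
\[
  g\big(R_1(x)\big)\le c_*\big(1-G(R_1(x))\big)^\gamma .
\]
Thus the task is entirely to control $1-G(R_1(x))$, which I would read off from the definition of $R_1$ in \eqref{eq.Rsdef}: since $G(R_1(x))$ equals the argument of $G^{-1}$ there,
\[
  1-G\big(R_1(x)\big)=\frac{\alpha}{2}+\frac{1-w}{2w}\alpha\, g(x)+\frac{1-\alpha}{2}\Delta_\lambda(x).
\]

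The first two summands are already manifestly of order $\alpha$; the only term that needs regime-specific input is the one involving $\Delta_\lambda(x)$, and this is exactly where membership in $\mathcal{X}_{\textnormal{I}}$ enters. I would invoke Lemma \ref{lem.tech.general.lambda-xl}, namely $\Delta_\lambda(x)<\frac{\alpha}{1+\alpha}\big(1+\frac{1-w}{w}g(x)\big)$, together with the crude estimate $\frac{1-\alpha}{1+\alpha}<1$, to bound the third summand by $\frac{\alpha}{2}\big(1+\frac{1-w}{w}g(x)\big)$. Adding the three pieces and collecting the $g(x)$ terms, the coefficients combine cleanly and the estimate collapses to
\[
  1-G\big(R_1(x)\big)<\alpha\Big(1+\tfrac{1-w}{w}g(x)\Big).
\]

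Finally I would use unimodality of $g$ (so $g(x)\le g(0)$) to replace $g(x)$ by $g(0)$, and observe that since $\gamma\le 1$ and the bracket is at least one, $\big(1+\frac{1-w}{w}g(0)\big)^\gamma\le 1+\frac{1-w}{w}g(0)$. Substituting back into the tail-decay bound gives
\[
  g\big(R_1(x)\big)\le c_*\,\alpha^\gamma\Big(1+\tfrac{1-w}{w}g(0)\Big)^\gamma\le c_*\,\alpha^\gamma\Big(1+\tfrac{1-w}{w}g(0)\Big),
\]
which is the claim. I expect the only delicate point to be the bookkeeping of the three additive terms in $1-G(R_1(x))$: one must verify that, after the $\frac{1-\alpha}{1+\alpha}<1$ simplification, the two $g(x)$-contributions and the two constant contributions sum to exactly $\alpha\big(1+\frac{1-w}{w}g(x)\big)$. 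Every other step is a direct substitution of an already-established inequality.
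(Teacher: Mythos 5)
Your proposal is correct and follows essentially the same route as the paper: apply the second inequality in Assumption \ref{def.tail.decay} to $R_1(x)>0$, read off $1-G(R_1(x))$ from \eqref{eq.Rsdef}, and control the $\Delta_\lambda(x)$ term via Lemma \ref{lem.tech.general.lambda-xl}. The only (harmless) difference is that you discard the factor $\tfrac{1-\alpha}{1+\alpha}<1$ early and arrive at $\alpha\big(1+\tfrac{1-w}{w}g(x)\big)$, whereas the paper keeps the exact combination $\tfrac{\alpha}{1+\alpha}\big(1+\tfrac{1-w}{w}g(x)\big)$; both yield the stated bound.
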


\begin{proof}
	By Assumption \ref{def.tail.decay} and the definition of the function $R_1,$
	\begin{align*}
		g\big(R_1(x)\big) 
		&\leq c_* \big(1 -G(R_1(x))\big)^\gamma
		= c_*\Big(\frac{\alpha}{2}+\frac{1-w}{2w}\alpha g(x) +\frac{1-\alpha}{2}\Delta_\lambda(x)\Big)^\gamma.
	\end{align*}
	Since $x\in \mathcal{X}_{\textnormal{I}},$ we find using Lemma \ref{lem.tech.general.lambda-xl}, $\Delta_\lambda(x) < \tfrac{\alpha}{1+\alpha}(1+\tfrac{1-w}{w} g(x)).$ Rearranging the terms yields,
	\begin{align*}
		g\big(R_1(x)\big) 
		&\leq c_*\Big(\frac{\alpha}{\alpha+1}\Big( 1 + \frac{1-w}{w}g(x)   \Big)  \Big)^\gamma
		\leq c_* \alpha^\gamma \Big( 1 + \frac{1-w}{w}g(0)\Big)^\gamma.
	\end{align*}
\end{proof}

\begin{lem}[Lower bound on $R_1$] \label{lem.tech.R1}
	For $g \in \mathcal{G},$
	\begin{align*}
		\inf_{x\in \mathcal{X}_{\textnormal{I}}, |x|>t_\alpha} R_1(x) \geq G^{-1}\Big(1 - \frac{\alpha}{1+\alpha}\Big(1 + \frac{1-w}{w}g(\lambda)\Big)\Big).
	\end{align*}	
\end{lem}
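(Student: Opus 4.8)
The plan is to exploit the explicit form $R_1(x) = G^{-1}(A(x))$ with
\[
A(x) := 1- \tfrac{\alpha}2 - \tfrac{1-w}{2w}\, \alpha\, g(x) - \tfrac{1-\alpha}2\, \Delta_\lambda(x)
\]
read off from \eqref{eq.Rsdef}. Since $G^{-1}$ is strictly increasing, it suffices to lower bound $A(x)$ uniformly over $x \in \mathcal{X}_{\textnormal{I}}$ with $|x|>t_\alpha$ and then apply $G^{-1}$. As the two $x$-dependent terms enter $A$ with a negative sign, a lower bound on $A$ is obtained from \emph{upper} bounds on $g(x)$ and on $\Delta_\lambda(x)$.

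For the upper bound on $\Delta_\lambda(x)$ I would invoke Lemma \ref{lem.tech.general.lambda-xl}, which for $x\in\mathcal{X}_{\textnormal{I}}$ with $|x|>t_\alpha$ gives $\Delta_\lambda(x) < \tfrac{\alpha}{1+\alpha}\big(1+\tfrac{1-w}{w}g(x)\big)$. Substituting this into $A(x)$ and abbreviating $r := \tfrac{1-w}{w}g(x)$ (so that $\tfrac{1-w}{2w}\alpha g(x)=\tfrac{\alpha}{2}r$), one collects the two occurrences of $r$ and the two constant terms using the identity $1 + \tfrac{1-\alpha}{1+\alpha} = \tfrac{2}{1+\alpha}$. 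A short computation then yields
\[
A(x) > 1 - \frac{\alpha}{1+\alpha}\Big(1 + \frac{1-w}{w}g(x)\Big).
\]

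Finally, every $x\in\mathcal{X}_{\textnormal{I}}$ satisfies $|x| > \lambda + R_1(x) > \lambda$ by the definition of the regime together with $R_1>0$ from Lemma \ref{lem.well_def}; since $g$ is symmetric and strictly decreasing on $\mathbb{R}_+$, it follows that $g(x)=g(|x|)\le g(\lambda)$. Inserting this and applying the increasing map $G^{-1}$ gives $R_1(x) > G^{-1}\big(1 - \tfrac{\alpha}{1+\alpha}(1 + \tfrac{1-w}{w}g(\lambda))\big)$ for every such $x$, and taking the infimum over $x$ proves the claim. I do not expect a genuine obstacle: the argument is a direct computation once Lemma \ref{lem.tech.general.lambda-xl} is in hand. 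The only points requiring care are the bookkeeping of inequality directions — the negative coefficients mean that upper bounds on $g(x)$ and $\Delta_\lambda(x)$ produce the desired lower bound on $A$ — and verifying $|x|>\lambda$ so that the monotonicity of $g$ may legitimately be used.
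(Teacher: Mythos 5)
Your proposal is correct and follows essentially the same route as the paper's proof: both substitute the bound from Lemma \ref{lem.tech.general.lambda-xl} into the definition of $R_1$, combine the coefficients via $1+\tfrac{1-\alpha}{1+\alpha}=\tfrac{2}{1+\alpha}$, and then use $|x|>\lambda+R_1(x)\geq\lambda$ together with the monotonicity of $g$ to replace $g(x)$ by $g(\lambda)$. No gaps.
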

\begin{proof}
	Using the definition of $R_1$ in \eqref{eq.Rsdef}, $\Delta_{\lambda}(x) = G(\lambda - x) - G(-\lambda - x)\leq G(\lambda - x),$ and Lemma \ref{lem.tech.general.lambda-xl}, we obtain
	\begin{align*}
		R_1(x) 
		&= 
		G^{-1}\Big( 1- \frac{\alpha}2 - \frac{1-w}{2w} \alpha g(x) - \frac{1-\alpha}2 \Delta_\lambda(x) \Big)	 \\
		&\geq G^{-1}\Big(1 - \frac{\alpha}{2}\Big(1 + \frac{1-\alpha}{1+\alpha}\Big)\Big(1 + \frac{1-w}{w}g(x)\Big) \Big) \;.
	\end{align*}
	Because of $x\in \mathcal{X}_{\textnormal{I}},$ $x>\lambda+R_1(x)\geq \lambda,$ completing the proof.
\end{proof}

\begin{proof}[Proof of Proposition \ref{prop.upper_coverage}]
	Recall that $\operatorname{L}_{\alpha}^{-1}(\theta_0) \subseteq \mathcal{X}_{\textnormal{I}}.$ Throughout the proof we frequently use the bounds in Lemma \ref{lem.cov_reform} and argue as for \eqref{eq.coverage_bds_specific}.
	
	(i) By \eqref{eq.Rsdef}, we have that $R_1(x) \leq G^{-1}(1-\alpha/2)$ and thus $\sup \operatorname{L}_{\alpha}^{-1}(\theta_0) \leq \theta_0+G^{-1}(1-\alpha/2).$ Using Lemma \ref{lem.cov_reform}, we conclude that 
	\begin{align*}
		C^{-}(\theta_0) 
		\leq \mathbb{P}_{\theta_0}\big(\theta_0\leq X\leq \theta_0+G^{-1}(1-\alpha/2)\big)
		= G\big( G^{-1}(1-\alpha/2)\big) - G(0) = \frac{1-\alpha}{2}
	\end{align*}
	establishing the upper bound. For the lower bound, we have with $\widetilde x :=\inf \operatorname{L}_{\alpha}^{-1}(\theta_0),$
	\begin{align}\label{eq.sdf1}
		\begin{split}
			C^{-}(\theta_0) 
			&\geq 
			\mathbb{P}_{\theta_0}\big( \theta_0\leq X\leq \theta_0+R_1(\widetilde x) \big) \\
			&= G\big(R_1(\widetilde x) \big) - \frac 12  \\
			&= \frac{1-\alpha}{2} - \frac{1-w}{2w}\alpha g(\widetilde x) - \frac{1-\alpha}{2}\Delta_\lambda(\widetilde x).
		\end{split}	
	\end{align}
	Because of $\widetilde x\in \mathcal{X}_{\textnormal{I}},$ application of Lemma \ref{lem.tech.general.lambda-xl} yields
	\begin{align*}
		\Delta_\lambda(\widetilde x) <
		\frac{\alpha}{1+\alpha}\Big(1+\frac{1-w}{w} g(\widetilde x)\Big) \;.
	\end{align*}
	Observe that since $\widetilde x\in \mathcal{X}_{\textnormal{I}},$ $\widetilde x> \lambda +R_1(\widetilde x) > R_1(\widetilde x).$ With Lemma \ref{lem.tech.g(R1)}, we have $g(\widetilde x)\leq g(R_1(\widetilde x)) =O(\alpha^\gamma).$ Together with \eqref{eq.sdf1} and using that $\tfrac 12 (1-\alpha) \alpha/(1+\alpha)= \alpha/2+O(\alpha^2),$ the lower bound in (i) follows since $\gamma \in (0,1].$

	(ii) To prove the lower bound in (ii), we derive a sharper lower bound for $\Delta_\lambda(\widetilde x),$ where $\widetilde{x} :=\inf \operatorname{L}_{\alpha}^{-1}(\theta_0)$ as before. Introduce $x^*:=\sup \operatorname{U}_{\alpha}^{-1}(\theta_0)$ and observe that by assumption, $x^* \geq \lambda.$ If $x^*\in \mathcal{X}_{\textnormal{I}},$ we have by Lemma \ref{lem.tech.R1} that $\theta_0 = x^*+R_1(x^*) \geq x^*+G^{-1}(1 - \tfrac{\alpha}{1+\alpha}(1 + \tfrac{1-w}{w}g(\lambda))).$ If $x^* \in \mathcal{X}_{\textnormal{II}} \cup \mathcal{X}_{\textnormal{III}},$ then, using $x^* \geq \lambda,$ we have by Theorem \ref{thm.altern_Ua} and the definition of the functions $R_2,R_3$ in \eqref{eq.Rsdef}, $\theta_0 = x^*+R_2(x^*)\wedge R_3(x^*) \geq x^* + A,$ with $A := G^{-1}(1-\alpha - \tfrac{1-w}w \alpha g(\lambda)).$ This shows that $\theta_0 \geq x^*+ A$ for all $x^* \geq \lambda.$ Recall that $\theta_0 > \lambda$ implies $\widetilde x \in \mathcal{X}_{\textnormal{I}}.$  Using Lemma \ref{lem.tech.R1} and Lemma \ref{lem.well_def}, we obtain $R_1(\widetilde{x}) > A \vee 0 \geq A/2.$ Combining the lower bounds on $\theta_0$ and $R_1(\widetilde{x})$ for $x^* \geq \lambda$, we have
	\begin{align*}
		\widetilde x &= \theta_0 + R_1(\widetilde x) \geq \lambda + A + \frac 12 A    := \lambda + \frac 32 G^{-1}\Big(1-\alpha - \frac{1-w}w \alpha g(\lambda)\Big).
	\end{align*}
	Using $\Delta_{\lambda}(x) = G(\lambda - x) - G(-\lambda - x) < G(\lambda - x)$, $-G^{-1}(p)=G^{-1}(1-p)$ and Assumption \ref{def.tail.decay}, the lower bound on $\widetilde{x}$ derived in the previous display gives
	\begin{align*}
		\Delta_\lambda(\widetilde x)
		&< G(\lambda- \widetilde x) \leq G\bigg(-\frac 32 G^{-1}\Big(1-\alpha - \frac{1-w}w \alpha g(\lambda)\Big)\bigg) \\
		&\leq c_* \bigg(\alpha + \frac{1-w}w \alpha g(\lambda)\bigg)^{1+\gamma} \\
		& =O(\alpha^{1+\gamma}).
	\end{align*}
	
	By arguing as in the proof of (i), replacing the bound on $\Delta_\lambda(\widetilde x)$ by $\Delta_\lambda(\widetilde x)=O(\alpha^{1+\gamma}),$ the conclusion of part (ii) follows.

	(iii) Write $\overline{x} := \sup \operatorname{L}_{\alpha}^{-1}(\theta_0).$ Recall that $\overline{x} \in \mathcal{X}_{\textnormal{I}}$ and $\theta_0=\overline{x}-R_1(\overline{x}).$ Rewriting this and applying the definition of the function $R_1$ in \eqref{eq.Rsdef}
	\begin{align}
		C^{-}(\theta_0) \leq  \mathbb{P}_{\theta_0}\big( X \in [\theta_0, \overline{x}]\big)
		= G\big(R_1(\overline{x})\big) - G(0)
		\leq \frac{1-\alpha}{2} \big(1- \Delta_\lambda(\overline{x}) \big).
		\label{eq.sdf2}
	\end{align}
	Using that $G$ is continuous, the identity $G(-G^{-1}(1-p))=p,$ the closed form of $R_1$ given in \eqref{eq.Rsdef}, and Assumption \ref{def.tail.decay},
	\begin{align*}
		\lim_{\theta_0 \downarrow \lambda} \Delta_\lambda(\overline{x}) 
		&= \lim_{\theta_0 \downarrow \lambda} G\big(\lambda-\theta_0-R_1(\overline{x}) \big) - 
		G\big(-\lambda-\theta_0-R_1(\overline{x}) \big)  \\
		&\geq  \lim_{\theta_0 \downarrow \lambda} G\big(-R_1(\overline{x}) \big) - 
		G(-2\lambda)  \\
		&\geq \frac{\alpha}{2} +\frac{1-\alpha}{2} \lim_{\theta_0 \downarrow \lambda} \Delta_\lambda(\overline{x})
		- c_*G(-\lambda)^{1+\gamma}.
	\end{align*}
	Since by assumption $G(-\lambda) \leq \alpha,$ rearranging the terms yields
	\begin{align*}
		\frac{1-\alpha}{2}\lim_{\theta_0 \downarrow \lambda} \Delta_\lambda(\overline{x}) 
		\geq \frac{\alpha (1-\alpha)}{2+2\alpha}- c_*\frac{\alpha^{1+\gamma}(1-\alpha)}{1+\alpha}
		= \frac{\alpha}{2}-O(\alpha^{1+\gamma}). 
	\end{align*}	
	Together with \eqref{eq.sdf2}, the claim follows.
\end{proof}

\begin{proof}[Proof of Proposition \ref{prop.Cpart2}]
	{\it (i):} We first show that if $G(-2\lambda) \leq \alpha/2,$ $\sup \mathcal{X}_{\textnormal{III}} < \lambda.$ By Theorem \ref{thm.altern_Ua} it is enough to show that $R_2(x)< R_3(x)$ for all $x\geq \lambda.$ Using the definitions of the functions $R_1$ and $R_2$ in \eqref{eq.Rsdef} and rewriting the expressions shows that $R_2(x)<R_3(x)$ if and only if $G(-\lambda-x) < \alpha/2+\tfrac{1-w}{2w}\alpha g(x) + \alpha \Delta_\lambda(x)/2.$ Since $G(-2\lambda)\leq \alpha/2$ implies the latter inequality for $x\geq \lambda,$ we must have that $\sup \mathcal{X}_{\textnormal{III}} < \lambda.$
	
	Set $\underline{x}:=\inf \operatorname{U}_{\alpha}^{-1}(\theta_0).$ By assumption $\underline{x} \geq \lambda,$ and therefore, $\underline{x}$ lies in regime $\mathcal{X}_{\textnormal{I}}$ or in regime $\mathcal{X}_{\textnormal{II}}.$ Suppose first that $\underline{x} \in \mathcal{X}_{\textnormal{I}}.$ Then, using Lemma \ref{lem.cov_reform}, the definition of the function $R_1$ in \eqref{eq.Rsdef} and $G(-G^{-1}(1-p))=p,$
	\begin{align}
		C^{+}(\theta_0)
		\leq G(0)-G\big( -R_1(\underline{x})\big) \leq \frac{1-\alpha}{2} .
		\label{eq.sdf3}
	\end{align}
	Similarly, for $\underline{x} \in \mathcal{X}_{\textnormal{II}},$ using also that $\Delta_\lambda(x) \leq G(\lambda-\underline{x})\leq G(0)=1/2$ and $G(-\lambda-\underline{x})\leq G(-2\lambda) \leq c_*\alpha^{1+\gamma},$
	\begin{align*}
		C^{+}(\theta_0)
		\leq G(0)-G\big( -R_2(\underline{x})\big) \leq \frac 12 - \alpha +\alpha \Delta_\lambda(\underline{x}) +G(-\lambda-\underline{x})\leq \frac{1-\alpha}{2}+O(\alpha^{1+\gamma}).
	\end{align*}
	Together with \eqref{eq.sdf3}, part (i) follows.
	
	{\it (ii):} Write $x^*:=\sup \operatorname{U}_{\alpha}^{-1}(\theta_0).$  By Lemma \ref{lem.basic_props_Ualpha} (i), $x^* \in \mathcal{X}_{\textnormal{I}}.$ Thus, 
	\begin{align*}
		C^{+}(\theta_0)
		\geq G(0)-G\big( -R_1(x^*)\big)
		= \frac{1-\alpha}{2}\big(1-\Delta_\lambda(x^*)\big) -\frac{1-w}{2w}\alpha g(x^*).
	\end{align*}
	Using $x^*\geq \lambda+\tfrac 32 G^{-1}(1-\alpha/2),$ $G(-G^{-1}(1-p))=p,$ and Assumption \ref{def.tail.decay}, we obtain $\Delta_\lambda(x^*)\leq G(\lambda-x^*) \leq G(-\tfrac 32 G^{-1}(1-\alpha/2))\leq c_*\alpha^{1+\gamma}.$ Since $x^* \in \mathcal{X}_{\textnormal{I}}$ implies $x^*>\lambda+R_1(x^*)\geq  R_1(x^*),$ Lemma \ref{lem.tech.g(R1)} yields $g(x^*)=O(\alpha^{1+\gamma}).$ This shows that $C^{+}(\theta_0) \geq (1-\alpha)/2-O(\alpha^{1+\gamma}),$ completing the proof for (ii).

	{\it (iii):} We first derive a lower bound. Again denote $x^*:=\sup \operatorname{U}_{\alpha}^{-1}(\theta_0)$ and recall that $x^* \geq \lambda >t_\alpha$ here. By Assumption \ref{def.tail.decay}, we have $g(\lambda) \leq c_*(1-G(\lambda))^{\gamma}=c_*G(-\lambda)^{\gamma}\leq c^* \alpha^{\gamma},$ where the latter follows from $G(-\lambda) \leq \alpha.$ If $x^* \in \mathcal{X}_{\textnormal{I}},$ we apply Lemma \ref{lem.cov_reform}. Combining Lemma \ref{lem.tech.R1} and $G(-G^{-1}(1-p))=p$ with the bound for $g(\lambda)$ then yields 
	\begin{align*}
		C^{+}(\theta_0)
		&\geq \frac 12 - G\big(-R_1(x^*)\big) \\
		&\geq \frac 12 -G\bigg( - G^{-1}\Big(1 - \frac{\alpha}{1+\alpha}\Big(1 + \frac{1-w}{w}g(\lambda)\Big)\Big) \bigg) \\
		&= \frac 12 - \frac{\alpha}{1+\alpha}\Big(1 + \frac{1-w}{w}g(\lambda)\Big) \\
		&\geq\frac 12 -\alpha - \frac{1-w}{w}c_* \alpha^{1+\gamma}.
	\end{align*}
	If $x^* \in \mathcal{X}_{\textnormal{II}},$ we obtain using the definition of the function $R_2$ in \eqref{eq.Rsdef} and $g(x^*) \leq g(\lambda),$
	\begin{align*}
		C^{+}(\theta_0)
		\geq \frac 12 - G\big(-R_2(x^*)\big)
		\geq \frac 12 - 	\alpha - \frac{1-w}{w}\alpha g(x^*) \geq\frac 12 -\alpha - \frac{1-w}{w}c_* \alpha^{1+\gamma}.
	\end{align*}
	With exactly the same argument, we also find $C^{+}(\theta_0)= \tfrac 12 -\tfrac \alpha 2 - \tfrac{1-w}{2w}c_* \alpha^{1+\gamma}$ if $x^*\in \mathcal{X}_{\textnormal{III}}$ and $x^* \geq \lambda.$ The lower bound follows by taking the infimum over $\{\theta_0: \sup \operatorname{U}_{\alpha}^{-1}(\theta_0) \geq \lambda \}.$ 
	
	We now derive an upper bound of $C^{+}(\theta_0)$ for 
	\begin{align*}
		\theta_0=\lambda+G^{-1}(1-\alpha)+G^{-1}\Big(1-\alpha + \frac{\alpha^2}{2}+\alpha G\Big(\frac 12 G^{-1}(\alpha) \Big)+G(-2\lambda)\Big).
	\end{align*}
	It will be enough to consider the case $\alpha \leq 1/2$ which implies that $G^{-1}(1-\alpha)\geq 0.$
	
	Let $\overline x = \lambda + \tfrac 12 G^{-1}(1-\alpha).$ We show that for any $x\in [\overline x, \lambda+G^{-1}(1-\alpha)],$ we have $U_\alpha(x)< \theta_0.$ By Lemma \ref{lem.mon}, $R_1$ is monotonically increasing on this interval and 
	\begin{align*}
		x+R_1(x) &\leq \lambda+G^{-1}(1-\alpha)+R_1\big(\lambda+G^{-1}(1-\alpha)\big) \\
		&< \lambda+G^{-1}(1-\alpha)+G^{-1}\Big(1-\alpha+\frac{\alpha^2}{2}+ G(-2\lambda)\Big) \\
		&\leq \theta_0.
	\end{align*}
	Since $R_2(x)<G^{-1}(1-\alpha +\alpha G(\lambda-x)+ G(-\lambda-x)),$ we also conclude that for any $x\in [\overline x, \lambda+G^{-1}(1-\alpha)],$
	\begin{align*}
		x+R_2(x) &< \lambda +G^{-1}(1-\alpha) + G^{-1}\big(1-\alpha +\alpha G(\lambda-\overline x) +G(-2\lambda)\big) \\
		&\leq 	\lambda +G^{-1}(1-\alpha) + G^{-1}\Big(1-\alpha +\alpha G\Big(-\frac 12 G^{-1}(1-\alpha)\Big)+G(-2\lambda) \Big) \\
		&\leq \theta_0.
	\end{align*}
	By Theorem \ref{thm.altern_Ua}, we obtain for $x\geq 0,$ $\operatorname{U}_{\alpha}(x) \leq x+(R_1(x)\vee R_2(x)).$ Combined with the bounds above, this proves $\operatorname{U}_{\alpha}(x) <\theta_0$ for all $x\in [\overline x, \lambda+G^{-1}(1-\alpha)].$
	
	It remains to be shown that this specific choice of $\theta_0$ satisfies the condition $\sup \operatorname{U}_{\alpha}^{-1}(\theta_0) \geq \lambda.$ Recall that $\theta_0 > \overline{x} > \lambda \geq t_{\alpha}$, and that $\operatorname{U}_{\alpha}$ is continuous on $(\lambda,\theta_0]$ by Theorem \ref{thm.altern_Ua}. Moreover, $\operatorname{U}_{\alpha}(\theta_0) \geq \theta_0 + (R_2(\theta_0) \vee R_1(\theta_0)) \geq \theta_0 + R_1(\theta_0) > \theta_0,$ where the latter inequality follows from Lemma \ref{lem.well_def}. Since it was just established that $\operatorname{U}_{\alpha}(\overline{x}) < \theta_0$, the intermediate value theorem ensures there exists an $x \in (\overline{x},\theta_0)$ s.t. $\operatorname{U}_{\alpha}(x) = \theta_0$. Because $x > \overline{x} > \lambda,$ the condition $\sup \operatorname{U}_{\alpha}^{-1}(\theta_0) \geq \lambda$ is satisfied.
	
	In a next step, we show that $\overline x \leq \theta_0-G^{-1}(1-c_*\alpha^{1+\gamma}).$ Inserting the definition of $\theta_0$ and $\overline x = \lambda + \tfrac 12 G^{-1}(1-\alpha),$ we need to verify that $G^{-1}(1-c_*\alpha^{1+\gamma}) \leq 3G^{-1}(1-\alpha)/2.$ By Assumption \ref{def.tail.decay} and using $G^{-1}(1-p)=-G(p),$
	\begin{align*}
		G\Big(-\frac{3}{2}G^{-1}(1-\alpha)\Big) \leq G\Big( \frac{3}{2}G^{-1}(\alpha) \Big) 
		\leq c_*\alpha^{1+\gamma},
	\end{align*}
	which by rearranging yields $G^{-1}(1-c_*\alpha^{1+\gamma}) \leq 3G^{-1}(1-\alpha)/2$ and therefore $\overline x \leq \theta_0-G^{-1}(1-c_*\alpha^{1+\gamma}).$
	
	Applying the results from the previous steps, we have that 
	\begin{align*}
		C^{+}(\theta_0) &\leq P_{\theta_0}\Big(X \in (-\infty, \overline x] \cup [\lambda+G^{-1}(1-\alpha), \theta_0] \Big) \\
		&\leq G\Big(-G^{-1}\big(1-c_*\alpha^{1+\gamma}\big)\Big) +\frac 12 - G\big( \lambda+G^{-1}(1-\alpha)-\theta_0\big) \\
		&= c_*\alpha^{1+\gamma}+ \frac 12 - G\bigg(-G^{-1}\Big(1-\alpha + \frac{\alpha^2}{2}+\alpha G\Big(\frac 12 G^{-1}(\alpha) \Big)+G(-2\lambda)\Big)\bigg)  \\
		&=\frac 12 -\alpha +O(\alpha^{1+\gamma})+\alpha G\Big(\frac 12 G^{-1}(\alpha) \Big)+G(-2\lambda).
	\end{align*}
	By Assumption \ref{def.tail.decay} and applying $G(-\lambda) \leq \alpha,$ it follows moreover that $G(-2\lambda) \leq c_*G(-\lambda)^{1+\gamma}\leq c_*\alpha^{1+\gamma}.$ This completes the proof for the upper bound.
	
	{\it (iv):} For $x\geq 0,$ the proof of Theorem \ref{thm.altern_Ua} shows that $\operatorname{U}_{\alpha}(x)=x+(R_2(x)\wedge R_3(x))\vee R_1(x),$ $\operatorname{U}_{\alpha}(x)> \lambda,$ and $\operatorname{U}_{\alpha}$ is a continuous function. Since $[0,2\lambda]$ is a compact interval, the minimum $A:=\min_{x\in [0,2\lambda]} \operatorname{U}_{\alpha}(x)$ is attained and we must have $A > \lambda.$ Set $\theta_0^*=A\wedge (2\lambda).$ For any $\theta_0\in (\lambda,\theta_0^*],$ it must hold that 
	\begin{align*}
		C^+(\theta_0)=P_{\theta_0}\big(\theta_0\in [X,\operatorname{U}_{\alpha}(X)]\big)\geq P_{\theta_0}\big(X\in [0,\theta_0]\big)
		= \frac{1}{2}-G(-\theta_0)\geq \frac{1}{2}-G(-\lambda).
	\end{align*}
	
	{\it (v):} Using \eqref{eq.coverage_upper} and $\lambda<\theta_0<t_\alpha,$
	\begin{align*}
		C^+(\theta_0)\leq P_{\theta_0}\big(\{\theta_0\geq X\} \cap \{|X|>t_\alpha\} \big)
		\leq P_{\theta_0}\big( X\leq -t_\alpha) =G(-t_\alpha-\theta_0)\leq G(-2\lambda).
	\end{align*}
\end{proof}

\begin{proof}[Proof of Theorem \ref{thm.main}]
	Observe that $\alpha \leq (4c_*)^{-1/\gamma}$ and $G(-2\lambda) \leq c_* \alpha^{1+\gamma}$ imply that $G(-2\lambda) \leq  c_*\alpha^{1+\gamma} \leq \alpha/4.$ The conditions of Proposition \ref{prop.upper_coverage} (ii) follow from the imposed assumptions. For Theorem \ref{thm.main} (i) it remains to show that $\sup \operatorname{U}_{\alpha}^{-1}(\theta_0) \geq \lambda+\tfrac 32 G^{-1}(1-\alpha/2)$ implies $\inf \operatorname{U}_{\alpha}^{-1}(\theta_0)\geq \lambda,$ such that Proposition \ref{prop.Cpart2} (i) and (ii) both hold as well. For that it will be enough to prove that $\sup_{x\leq \lambda}\operatorname{U}_{\alpha}(x) \leq \lambda+\tfrac 32 G^{-1}(1-\alpha/2).$ By Theorem \ref{thm.altern_Ua}, we find $\operatorname{U}_{\alpha}(x) \leq x+R_1(x)\vee R_2(x)$ whenever $\operatorname{U}_{\alpha}(x)>0.$ By Lemma \ref{lem.mon}, $R_1$ is monotone increasing and thus using $G(-2\lambda) \leq \alpha/4 < \alpha/2,$
	\begin{align*}
		\sup_{x\leq \lambda} \, x+R_1(x)
		&\leq \lambda+R_1(\lambda) \leq \lambda+G^{-1}\Big(1-\frac{\alpha}2-\frac{1-\alpha}{2}\Big[\frac 12-G(-2\lambda)\Big]\Big) \\ &\leq \lambda+G^{-1}\Big(\frac 34\Big).
	\end{align*}
	With $R_2(x)<G^{-1}(1-\alpha +\alpha G(\lambda-x)+ G(-\lambda-x))=G^{-1}(1-\alpha G(x-\lambda)+ G(-\lambda-x)),$
	\begin{align*}
		\sup_{x\leq \lambda} \, x+R_2(x) < \lambda+G^{-1}\Big(1-\frac{\alpha}2+ G(-2\lambda)\Big)
		\leq \lambda+\frac 32 G^{-1}\Big(1-\frac{\alpha}{2}\Big),
	\end{align*}
	where the last inequality follows from $\alpha \leq (4c_*)^{-1/\gamma}$ and $G(\frac 32 G^{-1}(\frac{\alpha}2))\leq c_*\alpha^{1+\gamma}\leq \tfrac{\alpha}2-G(-2\lambda).$
	
	Combining the last two displays and $\alpha\leq 1/2$ gives $\sup_{x\leq \lambda}\operatorname{U}_{\alpha}(x) \leq \lambda+\tfrac 32 G^{-1}(1-\alpha/2).$ Using that $C^{-}(\theta_0)+C^+(\theta_0) =C(\theta_0)$ completes the proof.
	
	The second part of the theorem is Proposition \ref{prop.upper_coverage} (ii) combined with Proposition \ref{prop.Cpart2} (iii).
\end{proof}

\begin{proof}[Proof of Theorem \ref{thm.coverage_inclusion}]
	We show that the credible set under the prior $\pi^{\operatorname{MS}}_{\lambda}$ is contained in the credible set under $\pi_{\lambda}$ for $X\geq 0.$ The posterior distribution under $\pi^{\operatorname{MS}}_{\lambda}$ is $\pi^{\operatorname{MS}}_{\lambda}(\theta|X)=g(\theta-X) \mathbf{1}(\theta>\lambda)/(1-\Delta^{\operatorname{MS}}_{\lambda}(X))$ with $\Delta^{\operatorname{MS}}_{\lambda}(X) := G(\lambda - X).$ Since $\Delta_{\lambda}^{\operatorname{MS}}(x) = \Delta_{\lambda}(x) + G(-\lambda - x) > \Delta_{\lambda}(x),$ we have $\pi_{\lambda}^{\operatorname{MS}}(\theta|X)>\pi_\lambda(\theta | x)$ for $\theta>\lambda.$ If $X$ is non-negative and $X$ is in regime $\mathcal{X}_{\textnormal{I}}$ or in regime $\mathcal{X}_{\textnormal{II}},$ then the HPD $[\operatorname{L}_{\alpha}(X),\operatorname{U}_{\alpha}(X)]$ under $\pi_\lambda$ is contained in $[\lambda, \infty).$
	Together with the definition of the HPD and $\pi_{\lambda}^{\operatorname{MS}}(\theta|X)>\pi_\lambda(\theta | x)$, the HPD $[\operatorname{L}_{\alpha}^{\operatorname{MS}}(X),\operatorname{U}_{\alpha}^{\operatorname{MS}}(X)]$ under $\pi_\lambda^{\operatorname{MS}}$ must be contained in $[\operatorname{L}_{\alpha}(X),\operatorname{U}_{\alpha}(X)].$ Therefore, we have that 
	\begin{align}
		\begin{split}
			&\mathbb{P}_{\theta_0}\big(\{\theta_0 \in [\operatorname{L}_{\alpha}(X),\operatorname{U}_{\alpha}(X)]\}\cap \{X\in \mathcal{X}_{\textnormal{I}} \cup \mathcal{X}_{\textnormal{II}} \}\cap\{X\geq 0\}\big) \\
			&\geq \mathbb{P}_{\theta_0}\big(\{\theta_0 \in [\operatorname{L}_{\alpha}^{\operatorname{MS}}(X),\operatorname{U}_{\alpha}^{\operatorname{MS}}(X)]\}\cap \{X\in \mathcal{X}_{\textnormal{I}} \cup \mathcal{X}_{\textnormal{II}} \}\cap\{X\geq 0\}\big).
		\end{split}\label{eq.678}	
	\end{align}
	Let us now study regime $\mathcal{X}_{\textnormal{III}}.$ Define
	\begin{align*}
		R_1^{\operatorname{MS}}(X)&:= G^{-1}\Big( \frac 12 +\frac{1-\alpha}{2}G(x-\lambda)\Big)=G^{-1}\Big(1-\frac{\alpha}{2}-\frac{1-\alpha}{2}G(\lambda-X)\Big), \\
		R_2^{\operatorname{MS}}(X)&:=G^{-1}\big(1-\alpha +\alpha G(\lambda-X)\big)=G^{-1}\big(1-\alpha G(X-\lambda)\big).
	\end{align*}
	Using Lemma 3 (a) in \cite{MS2008} for the first equality it follows that $	\operatorname{U}_{\alpha}^{\operatorname{MS}}(X)$ is given by
	\begin{align*}
		&(X+R_2^{\operatorname{MS}}(X))\mathbf{1}\Big(X\leq \lambda+G^{-1}\Big(\frac 1{1+\alpha}\Big)\Big)+(X+R_1^{\operatorname{MS}}(X))\mathbf{1}\Big(X> \lambda+G^{-1}\Big(\frac 1{1+\alpha}\Big)\Big)  \\
		&= X+R_1^{\operatorname{MS}}(X)\vee R_2^{\operatorname{MS}}(X).	
	\end{align*}
	For $X\geq 0,$ we have that $G(\lambda-X)+G(-\lambda-X)\leq G(\lambda)+G(-\lambda)=1$ and consequently also $R_3(X)\geq R_2^{\operatorname{MS}}(X)$ as long as $X\geq 0.$ Since $G(-\lambda-X) \leq G(\lambda-X),$ we have that $-\tfrac{1-\alpha}2G(\lambda-X)\leq 0\leq  \tfrac{\alpha}2(G(\lambda-X)-G(-\lambda-X))$ and therefore also $R_3(X)\geq R_1^{\operatorname{MS}}(X)$ for all $X.$ This proves $\operatorname{U}_{\alpha}(X)\geq \operatorname{U}_{\alpha}^{\operatorname{MS}}(X)$ if $X\in \mathcal{X}_{\textnormal{III}}$ and $X\geq 0.$ Since in regime $\mathcal{X}_{\textnormal{III}},$ $\operatorname{L}_{\alpha}(X)\leq -\lambda,$ we must have that $[\operatorname{L}_{\alpha}^{\operatorname{MS}}(X),\operatorname{U}_{\alpha}^{\operatorname{MS}}(X)]\subseteq [\operatorname{L}_{\alpha}(X),\operatorname{U}_{\alpha}(X)]$ if $X\in \mathcal{X}_{\textnormal{III}}$ and $X\geq 0.$ Thus, \eqref{eq.678} extends to 
	\begin{align*}
		C(\theta_0) &\geq \mathbb{P}_{\theta_0}\big(\{\theta_0 \in [\operatorname{L}_{\alpha}(X),\operatorname{U}_{\alpha}(X)]\}\cap \{X\geq 0\}\big) \\
		&\geq \mathbb{P}_{\theta_0}\big(\{\theta_0 \in [\operatorname{L}_{\alpha}^{\operatorname{MS}}(X),\operatorname{U}_{\alpha}^{\operatorname{MS}}(X)]\}\cap \{X\geq 0\}\big)
		\geq C^{\operatorname{MS}}(\theta_0)-\mathbb{P}_{\theta_0}(X<0).
	\end{align*}
\end{proof}

\bibliographystyle{acm}       
\bibliography{bibCoverage}

\end{document}